\numberwithin{equation}{section}
\newcommand\mynotes[1]{\textcolor{blue}{#1}}
\newtheorem{thm}{Theorem}[subsection]
\newtheorem{definition}[thm]{Definition}
\newtheorem{prop}[thm]{Proposition}
\newtheorem{lemma}[thm]{Lemma}
\newtheorem{cor}[thm]{Corollary}
\newtheorem{rem}[thm]{Remark}
\theoremstyle{definition}
\newtheorem{ex}[thm]{Example}
\newcommand{\ZZ}{\mathbf{Z}}
\newcommand{\RR}{\mathbf{R}}
\newcommand{\CC}{\mathbf{C}}   
\newcommand{\QQ}{\mathbf{Q}}
\newcommand{\KK}{K_{ur}}
\newcommand{\ha}{\mathcal{H}}
\newcommand{\PH}{\mathbb{P}}
\newcommand{\IH}{\mathbb{I}}
\newcommand{\calr}{\mathcal{R}}
\DeclareMathOperator{\IM}{\mathrm{IM}}
\newcommand{\tih}{\tilde{h}}
\newcommand{\tiH}{\tilde{H}}
\newcommand{\tisfI}{\tilde{\mathsf{I}}}
\newcommand{\mbf}{\mathbf}
\def\fb{\mathfrak{b}}
\newcommand{\respt}{\mathtt{r}}
\newcommand{\AAA}{\mathrm{A}} 
\newcommand{\BBB}{\mathrm{B}}
\newcommand{\CCC}{\mathrm{C}}
\newcommand{\DDD}{\mathrm{D}}
\newcommand{\str}{F_u}
\def\Frob{\ensuremath{\mathrm{Frob}}}
\DeclareMathOperator{\cInd}{\mathrm{c--Ind}}
\DeclareMathOperator{\fdeg}{\mathrm{fdeg}}
\newcommand{\E}{\mathsf{E}}
\newcommand{\D}{\mathsf{D}}
\newcommand{\od}{\mathrm{odd, dist}}
\newcommand{\mult}{\mathrm{mult}}
\newcommand{\cycl}{\mathrm{cycl}}
\def\dem{\ensuremath {\delta_-}}
\def\dep{\ensuremath {\delta_+}}
\def\emm{\ensuremath {m_-}}
\def\emp{\ensuremath {m_+}}
\def\diracdelta{\ensuremath{\updelta}}
\begin{document}

\title{On a uniqueness property of supercuspidal unipotent representations}

\author{Yongqi Feng}
\address[Y.~Feng]
{Department of Mathematics\\Shantou University\\ Shantou, Guangdong, 515063,  China}
\email{yqfeng@stu.edu.cn}

\author{Eric Opdam}
\address[E.~Opdam]
{Korteweg-de Vries Institute for Mathematics\\Universiteit van Amsterdam\\Science Park 105-107\\ 1098 XG Amsterdam, The Netherlands}
\email{e.m.opdam@uva.nl}

\date{\today}
\keywords{Cuspidal unipotent representation, formal degree, 
discrete unramified Langlands parameter}
\subjclass[2000]{Primary 20C08; Secondary 22D25, 43A30}
\thanks{This research was partly supported by ERC--advanced grant no.~268105, 
and partly by an NWO VIDI grant no.~639.032.528 (PI: Maarten Solleveld). 
It is a pleasure to thank Mark Reeder and Maarten Solleveld for useful discussions and comments. Yongqi will extend his gratitude to Maarten Solleveld and IMAPP at Radboud University Nijmegen for the support.}

\begin{abstract} 
The formal degree of a unipotent discrete series character of a simple linear algebraic group over a non-archimedean local field 
(in the sense of Lusztig \cite{Lusztig-unirep}), is a rational function of $q$ evaluated at $q=\mbf{q}$, the cardinality of the residue field.
The irreducible factors of this rational function are $q$ and cyclotomic polynomials.
We prove that the formal degree of a \emph{supercuspidal} unipotent representation 
determines its Lusztig-Langlands parameter, up to twisting by weakly unramified 
characters.  
For split exceptional groups this result follows from the work of M.~Reeder \cite{Re}, and for the remaining exceptional cases this is verified in \cite{Fe2}. In the present paper we treat the classical families. 

The main result of this article characterizes unramified Lusztig-Langlands parameters which support a cuspidal local system in terms of formal degrees.
The result implies the uniqueness of so-called cuspidal spectral transfer morphisms 
(as introduced in \cite{Opds}) between unipotent affine Hecke algebras (up to twisting by unramified characters). 
In \cite{Opdl} the essential uniqueness of arbitrary unipotent spectral transfer morphisms was reduced to the cuspidal case.
\end{abstract}

\maketitle
\thispagestyle{fancy}
\rhead{Copyright\copyright 2020\\
\small{ This manuscript version is made available under the CC-BY-NC-ND 4.0 license} \url{http://creativecommons.org/licenses/by-nc-nd/4.0/}}

\tableofcontents

\section{Introduction}

Let $K$ be a non-archimedean local field. This paper is about a very special case of the local Langlands correspondence (LLC) for 
$K$-forms of connected almost simple groups which split over an unramified extension. 
Such a $K$-form is an inner twist $\mbf{G}^u$ of a connected almost simple unramified $K$-group $\mbf{G}$
(where $u\in G_{ad}:=\mbf{G}_{ad}(K_{ur})$ encodes the inner twisting, see paragraph \ref{par:inner} for more details).
We denote by $G^{\str}$ the group of $K$-rational points of $\mbf{G}^u$. 
For the sake of simplicity we assume in the Introduction that $\mbf{G}$ is simply connected, but in the body of the paper there is no such restriction.

The LLC for $G^{\str}$ is a conjectured non-abelian version of local class field theory that is currently 
of great interest in harmonic analysis and automorphic forms. 
Roughly speaking, the LLC predicts that each irreducible complex representation $\pi$ of $G^{\str}$ corresponds to a 
homomorphism called \emph{local Langlands parameter} (LLP) 
\begin{equation*}
\varphi_{\pi}: W_K \times \mbf{SL}_2(\CC) \to {}^LG:=G^\vee \rtimes \langle\theta\rangle, 
\end{equation*}
where $\theta$ is an outer automorphism of $G^\vee$ arising from the action of 
the Weil group $W_K$ on the root datum of $G^\vee$. 
Here $G^\vee:=\mbf{G}^\vee(\CC)$ is a connected complex reductive  
Lie group whose root datum is dual to that of $\mbf{G}$. Notice that ${}^LG$ depends only on $\mbf{G}$, 
not on the inner twist $u$. The set of LLP for $G^{\str}$ is the subset of the set of LLP for $G^F$ 
which satisfy the so-called relevance condition for $G^{F_u}$ (cf.  \cite[8.2(ii)]{Bo}). 
We identify local Langlands parameters which are conjugate under the action of $G^\vee$.

Restricted to discrete series representations, the mapping $\pi \mapsto \varphi_\pi$ should be a finite-to-one surjection onto the set
$\Phi^2(G^{\str})$ of $G^\vee$-conjugacy classes of parameters with finite centralizer; such parameters are likewise called \emph{discrete}. 
We note that $\Phi^2(G^{\str})=\Phi^2(G^{F})$, since the $G^{F_u}$-relevance condition is trivially satisfied in the discrete case. 
The members of the fibre of the surjection $\pi \mapsto \varphi_\pi$
are conjectured to be parameterized by, roughly, the local systems $\sigma_\pi$  
on the $G^\vee$-orbit of $\varphi_\pi$. Thus it is expected that the union of the sets of (equivalence classes of) 
discrete series representations $\pi$ of the various inner forms $G^{\str}$ is in bijection with the set of $G^\vee$-orbits of pairs $(\varphi_\pi,\sigma_\pi)$, where $\varphi_\pi$ is a discrete local Langlands parameter and $\sigma_\pi$ is a local system as above.\\
 
For $\mbf{G} = \mbf{GL}_n$ the LLC has been proved in a strong form \cite{H1,HT}, namely there are invariants ($L$-functions  
and $\epsilon$-factors of pairs) attached independently to representations and parameters, and the map $\pi \mapsto \varphi_\pi$ is uniquely characterized \cite{H2,H3} by agreement of the invariants $L$ and $\epsilon$ on both sides. Using the relation 
with the local Langlands correspondence for $\mbf{SL}_n$ \cite{GK} these results can be extended also to $\mbf{SL}_n$. 
 
For groups other than $\mbf{GL}_n$ or $\mbf{SL}_n$, 
representation-theoretic invariants as robust as $L$-functions and $\epsilon$-factors are not known, 
and it is not even conjectured how one might characterize the map 
$\pi \mapsto \varphi_\pi$ uniquely,  
even when such a mapping has been found. 
However, about twenty years ago it was observed by Reeder \cite{Re-Iwa-sph,Re} that for a discrete series representation $\pi$, 
the formal degree $\textup{fdeg}(\pi)$ may function as a weak-but-useful substitute for the invariants $L(\pi)$ and $\epsilon(\pi)$. 
One obvious advantage of this proposal is that $\fdeg(\pi)$ is a well defined invariant 
for any discrete series representation $\pi$ of $G^{\str}$. 

On the Galois side, the expression conjectured to correspond to $\fdeg(\pi)$ \cite{HII} is essentially the adjoint gamma factor 
$\gamma(\varphi)$ of $\varphi$. More precisely, it is expected that 
\begin{equation}\label{eq:HII}
\fdeg(\pi)=C_{\pi}\gamma(\varphi_\pi), 
\end{equation} 
where $C_{\pi}$ is a nonzero rational number independent of $K$ (and which admits an explicit expression in terms of the local system $\sigma_\pi$). 
The adjoint gamma factor $\gamma(\varphi)$ is a complex number that is computed from the action of $\varphi \big(W_K \times \mbf{SL}_2(\CC) \big)$ on the Lie algebra of $G^\vee$. 
It is known that $\gamma(\varphi)$ is obtained via specialization:
$\gamma(\varphi)=\gamma(\varphi,\mathbf{q})$,  
where $\gamma(\varphi,q)$ is a rational function in an indeterminate $q$, 
and $\mathbf{q}$ denotes the cardinality of the residue field of $K$.
\footnote{We use an additive character of $K$ of order $1$ to normalize 
the Haar measure and the adjoint $\gamma$-factor. This gives a factor $\mbf{q}^{-\textup{dim}(\mbf{G})/2}$ 
in $\gamma(\varphi)$ and in $\textup{fdeg}(\pi)$ compared to \cite{HII}
(where an additive character of order $0$ is used).
In our normalization these numbers are evaluations at $q=\mbf{q}$ 
of rational functions of the indeterminate $q$.}
 
In this paper we restrict our attention to \emph{unipotent} representations
\cite{Lusztig-unirep}. If $\mbf{G}$ is of adjoint type Lusztig has  
constructed a bijection $\pi \leftrightarrow (\varphi_\pi,\sigma_\pi)$ from the set of equivalence classes of unipotent representations to the set of $G^\vee$-orbits of 
enhanced Langlands parameters with $\varphi_\pi$ \emph{unramified}, i.e.~trivial on the inertia subgroup $I_K\subset W_K$. Denote by $\Phi_{ur}^2(G^{F})$ 
the set of $G^\vee$-conjugacy classes of discrete local Langlands parameters $\varphi$ which are unramified.
It follows from Lusztig's work \textit{op.~cit.} that one can define a finite-to-one map 
\begin{equation}\label{eq:lusztig}
\bigsqcup_{\omega = [u] \in H^1(F_u,G_{ad})}\mathrm{Irr}_{upt}^2\,(G^{\str}) \to \Phi_{ur}^2(G^{F}), \quad \pi \mapsto \varphi_\pi
\end{equation}

Reeder \cite{Re} has shown for split exceptional groups  $\mbf{G}^u$ of adjoint type that, if $\pi$ is a unipotent discrete series 
representation of $\mbf{G}^{F_u}$,  
there exists a \emph{unique} discrete unramified Langlands parameter $\varphi_\pi$ and a nonzero rational constant $C_\pi$ such that 

\begin{equation}\label{eq:fdeg=gamma}
\fdeg(\pi, q)=C_{\pi}\gamma(\varphi_\pi, q)
\end{equation}
(an equality of \emph{rational functions} in $q$). 

Using Reeder's results it was verified in \cite{HII} that, accepting 
\eqref{eq:lusztig} as a LLC for unipotent discrete series representations, \eqref{eq:HII} holds for unipotent representations 
of a simply connected, split exceptional group defined over a non-archimedean local field. 
In \cite{Opdl} this result was extended to all unramified simple groups $\mbf{G}^u$ of adjoint type 
and their inner twists. \\
  
The main motivation of this paper is the converse question; whether or not 
the unramified local Langlands parameter $\varphi_\pi$ is uniquely determined by \eqref{eq:fdeg=gamma} in general. \\

It is not hard to settle this question affirmatively for the remaining exceptional groups, see \cite{Fe2}. 
Therefore we will focus on the remaining case of classical $K$-groups in the present paper. By a classical $K$-group we 
mean a $K$-form of a connected almost simple group whose root system is classical 
(see paragraph \ref{par:Class} for more detail). Observe that our notion of a classical group only depends on the 
central isogeny class.
%

We have not been able to answer this question 
in complete generality in this case. Our main result (which we formulate here only for the case of simply connected 
classical groups, for the sake of simplicity; see Theorem \ref{thm:A} for the general case) states that this uniqueness 
result is true when we concentrate on the case of \emph{supercuspidal} unipotent representations:
\begin{thm}\label{thm:main}
Suppose that $\mbf{G}^u$ is an inner twist of a simply connected unramified classical group $\mbf{G}$ over $K$. 
For every supercuspidal unipotent representation $\pi$ of $\mbf{G}^u$, 
there exists a \emph{unique} equivalence class of discrete unramified parameters $\varphi\in \Phi^2_{ur}(G^{F})$ such that 
\eqref{eq:fdeg=gamma} holds as rational functions in $q$ for some constant rational factor $C_{\pi}$.
\end{thm}

The computability of both sides of \eqref{eq:fdeg=gamma} make this uniqueness result quite useful. 
It can be amplified to show the uniqueness of an assignment $\pi\mapsto\varphi_\pi$ 
for tempered unipotent representations which is in compliance 
with the two conjectures on the Plancherel measure in terms of local Langlands parameters as formulated in \cite{HII} (one can find this result in \cite{Opdl} in the form of the essential uniqueness of \emph{spectral transfer maps} to the Iwahori--Hecke algebra of $G^{F}$). \\

We now describe the strategy of the proof of Theorem \ref{thm:main}. 
The  proof consists of a case-by-case analysis. 
For $\mbf{G}$ isogenous to $\mbf{SL}_n$ the result is obvious, since there is essentially only one unipotent discrete local Langlands parameter. 
In the remainder of this introduction we will concentrate on the other classical 
cases.\\

By the work of Lusztig on unipotent characters for finite groups of Lie type, the 
left-hand side of \eqref{eq:fdeg=gamma} is explicitly known for all cuspidal unipotent characters $\pi$ (see e.g.~\cite{Carter}). 
By inspection of the list of degrees of cuspidal unipotent characters of classical groups, a crucial fact is that in the remaining classical cases, $\fdeg(\pi)$ is the reciprocal of a product of \emph{even} cyclotomic polynomials $\Phi_{2n}(q)$, up to multiplication by a power of $q$ and a rational constant. 

Let us call an unramified discrete local Langlands parameter $\varphi$ ``of even degree" if $\gamma(\varphi,q)$ has no odd cyclotomic factors in numerator or denominator. This property turns out to be very distinctive; it reduces the collection of  parameters $\varphi_\pi$ which may possibly satisfy \eqref{eq:fdeg=gamma} dramatically. The first objective in the proof will be the classification of the parameters of even degree, which will be carried out in Section \ref{sec:noodd}. Once this has been established, the remaining part of the proof consists of a direct comparison of $\fdeg(\pi, q)$ and $\gamma(\varphi,q)$ where $\varphi$ runs through the list of eligible parameters. This second step will be carried out in Section \ref{sec:pfmain}.\\
 
We use various technical tools to compute $\gamma(\varphi,q)$ 
for a discrete unramified local Langlands parameter $\varphi$. The starting point, which follows by combining the results of \cite{HO,Re,HII}, is the fundamental fact 
that $\gamma(\varphi,q)$ can be interpreted as the residue of the $\mu$-function of the normalized Iwahori--Hecke algebra of $G^{F}$ at the semisimple element 
$\respt_\varphi=c_\varphi s_\varphi\theta \in G^\vee \theta$, where 
\begin{equation}\label{eq:phi(Frob)}
c_\varphi := \varphi\left(\mathrm{id},\begin{pmatrix} q^{1/2} & 0\\0 & q^{-1/2}\end{pmatrix} \right)\in G^\vee \quad s_\varphi\theta :=\varphi\left(\Frob,\begin{pmatrix} 1 & 0\\ 0 & 1\end{pmatrix}\right)\in G^\vee\theta. 
\end{equation}
Let $S^\vee$ denote a $\theta$-stable maximal torus of $G^\vee$. The group $W_0=W(G^\vee,S^\vee)^\theta$ is isomorphic to the relative Weyl group 
$W(G^{F},S^{F})$ and acts on the torus $T:=S^\vee/(1-\theta)S^\vee$. It is well-known that the $\mathrm{Int}(G^\vee)$-orbits of semisimple elements in $G^\vee\theta$ 
are in bijection with the $W_0$-orbits in $T$ (cf.~\cite[Proposition 6.7]{Bo}). By abuse of notation we will denote the image of $G^\vee\respt_\varphi$ in $T$ by $W_0\respt_\varphi$. Furthermore, it is not difficult to see that $W_0\respt_\varphi$ completely determines $G^\vee\varphi$ (cf.~\cite[Appendix]{Opd3}). 

The points $\respt_\varphi\in T$ are the so-called \emph{residual points} of $T$ (cf.~\cite{OpdMoscow}), a notion that can be characterized completely in terms of the $\mu$-function of the Iwahori--Hecke algebra of $G^F$, which is a $W_0$-invariant rational function defined on $T$
(cf.~\cite[(9) and (13)]{Opds}). The residue of $\mu$ at $\respt_\varphi$ is denoted by 
$\mu^{\{\respt_\varphi\}}(\respt_\varphi)$ (cf.~\cite[3.2.1]{Opds})
and we have the identity (see text around \cite[(38)]{Opdl}):
\begin{equation}\label{eq:residue}
\gamma(\varphi,q)=C\mu^{\{\respt_\varphi\}}(\respt_\varphi)
\end{equation}
where $C$ is some nonzero rational constant. We call $\respt_\varphi$ ``of even degree'' if and only if $\varphi$ is 
of even degree. \\

Recall that $\varphi$ is discrete if and only if the centralizer $C_{G^\vee}(\varphi)$ 
is finite, or equivalently if and only if the connected centralizer $H:=C_{G^\vee}(s_\varphi \theta)^\circ$ is semisimple, and 
$$
u_\varphi:=\varphi\left(\mathrm{id},\begin{pmatrix} 1 & 1\\ 0 & 1\end{pmatrix}\right) \in H
$$ 
is a \emph{distinguished} unipotent element in $H$. In the classical cases we are considering, $H$ is isogenous to a direct product of at most two almost simple classical groups $H_-$ and $H_+$. Thus, the $G^\vee$-orbit of $\varphi$ is completely specified by a pair of partitions 
$\lambda=(\lambda_-, \lambda_+)$ corresponding to distinguished unipotent elements of $H_\pm$. 
We write $\respt_\varphi=(-\respt_{\lambda_-},\respt_{\lambda_+})$ accordingly, 
where $\respt_{\lambda_\pm}=c_{\lambda_\pm}$ is a real residual point 
(i.e.~a residual point with positive coordinates) of the $\mu$-function 
$\mu_{H_\pm}$ associated with the Iwahori--Hecke algebra of $H_\pm$.
An important result towards the classification of residual points of even degree states that 
$(-c_{\lambda_-},c_{\lambda_+})$ is of even degree 
if and only if $\respt_{\lambda_-}$ and $\respt_{\lambda_+}$ are both of even degree. 
Hence we may concentrate on the individual partitions $\lambda_\pm$ at this stage.
\\

The $\mu$-functions which play a role in the present context are $\mu$-functions of an affine Hecke algebra of the form 
$\CCC_l (\delta_-,\delta_+)[q^\fb]$ with parameters (following the conventions of \cite[3.2.1]{Opdl}) $m_+(\alpha)=\fb$ and 
$m_-(\alpha)=0$ if $\alpha$ is a type $\DDD_n$-root, and 
$m_\pm(\beta)=\fb \delta_\pm$ if $\beta$ is a short root in $\BBB_n$. 
The relevant parameter triples $(\delta_-,\delta_+;\fb)$ of the $\mu$-functions $\mu_G$ are: 
$(1/2,1;2), (0,1/2;2), (1/2,1/2;1), (0,1;1), (0,0;1), (1,1;1)$. \\

The classification of the partitions $\lambda_\pm$ such that the corresponding residual point $\respt_{\lambda_\pm}$ of 
$\mu_{H_\pm}$ is of even degree is quite easy for $\delta_\pm=1/2$, but rather difficult for $\delta_\pm\in\{0,1\}$. 
For example the third case $\CCC_l (1/2,1/2)[q]$, corresponding to $\mbf{G}$ of type $\mbf{SO}_{2l+1}$,  is an easy analysis without using any further technical tools, see Proposition \ref{cor:deltaishalf}.
The integral parameter cases seem to be difficult because of the abundance of possible cancellations which take place in the residue computation. In these cases we use the \emph{Spectral Transfer Morphisms} (STMs) developed in \cite{Opds,Opdl} as a technical tool. For any $\delta\in\{0,1\}$ and residual point $\respt_{\lambda}$ 
(depending on a partition $\lambda$ with odd, distinct parts) 
of the $\mu$-function of the Hecke algebra of type $\CCC_l (0,\delta)[q]$, 
using these STMs we find: a parameter $m\in (\ZZ \pm 1/4)_+$, 
a positive integer $d$, a partition $\nu$ with corresponding residual point $\respt_\nu$ for the 
$\mu$-function $\mu_{2;1/4,m}^d$ of 
the Hecke algebra of type $\CCC_d (1/4,m)[q^2]$ such that:
\begin{equation}\label{eq:STM}
\mu_{1;0,\delta}^{l,\{\respt_{\lambda}\}}(\respt_\lambda)=C\mu_{2;1/4,m}^{d,\{\respt_{\nu}\}}(\respt_\nu)
\end{equation}
for some nonzero rational constant $C$. The algorithm to determine $(d,m, \nu)$ in terms of $(l,\delta, \lambda)$ 
and vice versa will be discussed in Section \ref{e-s}.
This enables us to transfer the classification of residual points ``of even degree'' for $\mu=\mu_{1;0, \delta}^l$ to those of 
$\mu^d_{2; 1/4,m}$.  
In the latter case the residual points of even degree are much easier to classify, since the 
parameters $(1/4,m)$ are more ``generic" than $(0,\delta)$. 
This procedure is carried out in Section \ref{sub:oddextra}. \\

The structure of this paper is as follows: We focus on classical 
groups (in the sense of paragraph \ref{par:Class}). In Section \ref{sec:uptrep}, we recall the basics of supercuspidal unipotent representations and explicitly give the formal degrees that we will study. In Section \ref{sec:adgamma}, we will give the discrete unramified local Langlands parameters for the supercuspidal representations described in Section \ref{sec:uptrep}. The main theorem, which states that the equation \eqref{eq:fdeg=gamma} between supercuspidal formal degree and adjoint $\gamma$-factor uniquely characterizes the Langlands correspondence, is also formulated in Section \ref{sec:adgamma}. An important technique to prove the main theorem is the use of  
\emph{spectral transfer morphisms} (STMs). The \emph{existence} results of STMs of \cite{Opdl, Fe2} enable us to associate a certain central character of the Iwahori--Hecke algebra of the quasi-split inner form $G^F$ of $G^{F_u}$, given the formal degree of a supercuspidal unipotent representations of $G^{F_u}$. The desired uniqueness result 
is now reduced to a property of the $\mu$-function of this Iwahori--Hecke algebra at certain central characters.  
The relevant spectral transfer morphisms are explicitly described in Sections \ref{sec:adgamma} and \ref{e-s}. In particular, the extra-special algorithm in Section \ref{e-s} enables us to translate the integral parameters of affine Hecke algebras to generic parameters. In Sections \ref{sec:noodd} and \ref{sec:pfmain} we prove the main theorem, using STMs and the evenness of the supercuspidal formal degrees (of groups isogenous to classical groups). 

\section{Conventions and notations}

\subsection{The group $\mbf{G}$ and its $L$-group}

Throughout this paper, $K$ will be a non-archimedean local field with finite residue field $\frak{F}$, and we fix a separable closure $K_s$ of $K$. 
Let $\KK\subset K_s$ be the maximal unramified extension of $K$. The residue field $\overline{\frak{F}}$ of $\KK$ is an algebraic closure of $\frak{F}$. 
There are isomorphisms of Galois groups 
$\mathrm{Gal}(\KK/K) \simeq \mathrm{Gal}(\overline{\frak{F}}/\frak{F}) \simeq \hat{\ZZ}$. The \emph{geometric Frobenius element} $\Frob$, whose \emph{inverse} induces the automorphism $x \mapsto x^{|\frak{F}|}$ for any $x \in \overline{\frak{F}}$, is a topological generator 
of $\mathrm{Gal}(\KK/K)$. 
Let $I_K =\mathrm{Gal}(K_s/\KK)$ be the inertia subgroup of $\mathrm{Gal}(K_s/K)$ 
and let $W_K$ be the Weil group of $K$.\\
 
Unless otherwise stated, $\mbf{G}$ denotes a connected, absolutely almost simple and unramified linear algebraic group over $K$. 
By this we mean that (1) $\mbf{G}$ is quasi-split over $K$; (2) $\mbf{G}$ splits over $\KK$, and (3) the extension of scalars group $\mbf{G}\times_{K}K_s$ is almost simple. 

Fix a $K$-Borel subgroup $\mbf{B}$ and a maximally $K$-split maximal $K$-torus $\mbf{S}\subset\mbf{B}$ which splits over $\KK$. 
\emph{Unconventionally, we will denote by $\Sigma$ the based root datum of the identity component of the dual $L$-group ${}^LG$ (see below), and let 
$$
\Sigma^\vee=(X^*(\mathbf{S}),\Sigma^\vee_0, \Delta_0^\vee, X_*(\mbf{S}), \Sigma_0, \Delta_0)
$$ 
denote the based root datum of  $(\mathbf{G, B, S})$, where $X^*(\mathbf{S}):=\mathrm{Hom}(\mbf{S}, \mbf{G}_m)$ is the \emph{character} lattice of $\mathbf{S}$, and the choice of the base $\Delta_0^\vee \subset \Sigma_0^\vee$ is compatible with $\mbf{B}$.} Also we have $X_*(\mbf{S}):=\mathrm{Hom}(\mbf{G}_m, \mbf{S})$ as the cocharacter lattice of $\mbf{S}$. Suppose that $Q(\Sigma_0)$ is the $\ZZ$-span of $\Sigma_0$. The quotient $\Omega:= X_*(\mbf{S})/Q(\Sigma_0)$ is a finite abelian group. \\

By construction the Galois group $\mathrm{Gal}(K_s/K)$ acts by automorphisms of the triple $(\mbf{G, B, S})$. 
Since $\mbf{G}$ is split over $\KK$, the induced action on the root datum $\Sigma^\vee$ 
factors through $\mathrm{Gal}(\KK/K)$, and hence is completely determined by the action of the geometric Frobenius element $\Frob$. 
We denote the corresponding automorphism of $\Sigma^\vee$ by $\theta$.

Let $G^\vee$ be a connected complex reductive group with a maximal torus $S^\vee$ and Borel subgroup $B^\vee\supset S^\vee$, 
such that the based root datum $\Sigma$ of 
$(G^\vee, B^\vee, S^\vee)$ is dual to $\Sigma^\vee$.  The action of $\Frob$ on $\Sigma$ will also denoted by $\theta$. 
We fix an ``\'epinglage'' for $(G^\vee,B^\vee,S^\vee)$, and use it to lift the action of $\mathrm{Gal}(K_s/K)$ determined by 
$\theta$ to an action on $G^\vee$.  
We define ${}^L G:= G^\vee \rtimes \langle \theta \rangle$ as the Langlands $L$-group of $\mbf{G}$ over $K$. \\

\subsection{Inner forms and Kottwitz's isomorphism}\label{par:inner}

Let $Z(\mbf{G})\subset \mbf{S}$ be the centre of $\mbf{G}$, and let $\mbf{G}_{ad}:=\mbf{G}/Z(\mbf{G})$ be the adjoint form of $\mbf{G}$. 
Then $\mbf{S}_{ad}:=\mbf{S}/Z(\mbf{G})\subset \mbf{G}_{ad}$ is a maximally $K$-split maximal $K$-torus.  
We write $G, G_{ad}, S, S_{ad}$ for the group of $\KK$-rational points of the groups 
$\mbf{G, G}_{ad}, \mbf{S}, \mbf{S}_{ad}$ respectively. 
We denote the action of $\Frob$ on $G$ or $G_{ad}$ by $F$.

For our purpose we shall consider the various inner forms of $\mbf{G}$ in this paper. Recall that isomorphism classes of inner forms are parameterized by the Galois cohomological set $H^1(K, \mbf{G}_{ad})$. By a theorem of Steinberg, which says that $H^1(\KK, \mbf{G}_{ad})$ is trivial, we obtain a canonical bijection $H^1(K, \mbf{G}_{ad}) \cong H^1(F, G_{ad})$. \\

Kottwitz's isomorphism (cf. \cite{DeRe,Ko}) 
gives a natural bijection between $H^1(F, G_{ad})$ and $\mathrm{Hom}(\pi_0({}^L Z_{ad}), \CC^\times)$, where ${}^LZ_{ad}$ denotes the centre of ${}^LG_{ad}$. In our present setting, the centre $Z_{ad}^\vee$ of $G^\vee_{ad}$ is finite, and there is a canonical isomorphism 
$$
\mathrm{Hom}(Z_{ad}^\vee, \CC^\times) \simeq \Omega_{ad}:=X_*(\mathbf{S}_{ad})/Q(\Sigma_0).
$$ 
One obtains natural bijections  
$$
H^1(F, G_{ad})\simeq \mathrm{Hom}({}^LZ_{ad}, \CC^\times) \simeq \Omega_{ad}/(1-\theta) \Omega_{ad},
$$ 
such that the $K$-quasi-split group $\mbf{G}$ corresponds to the trivial character. 

A cocycle $z\in Z^1(F, G_{ad})$ is completely determined by its image $u:=z(F)\in G_{ad}$ of $F$, 
and its cohomology class $\omega=[z]\in H^1(F, G_{ad})$ is represented by the $F$-twisted 
conjugacy class of $u$. The inner twist of $\mbf{G}$ corresponding to $z$ is denoted by $\mbf{G}^u$. We have $\mbf{G}^u(\KK)=G$, and $\Frob$ acts on $\mbf{G}^u(\KK)$ via the $K$-automorphism 
$F_u:=\mathrm{Int}(u) \circ F \in \mathrm{Aut}_K(G)$ of $G$. 

Following \cite{DeRe} we choose, for each class $\omega\in H^1(F, G_{ad})$, 
an inner twist $F_u$ of $F$ representing $\omega$ as follows. 
In the Bruhat--Tits building of $G_{ad}$, choose a fundamental alcove $C_{ad}$ such that $F(C_{ad}) = C_{ad}$ inside  
the apartment determined by $S_{ad}$, and let $\IH_{ad}\subset G_{ad}$ be the corresponding $F$-stable Iwahori subgroup. 
We have an isomorphism $\Omega_{ad}\simeq N_{G_{ad}}(\IH_{ad})/\IH_{ad}$. In \cite{DeRe} it is shown that one can 
choose the representing cocycle $z\in Z^1(F, G_{ad})$ for 
$\omega$ such that $u=z(F) \in N_{G_{ad}}(\IH_{ad})$.
\subsection{Weakly unramified characters}

Let ${}^0 S := \mathcal{O}_{\KK}^{\times} \otimes X_*(\mbf{S})$ be the maximal bounded subgroup of $S$. We have $X_*(\mbf{S}) \simeq S/{}^0 S$ as (free) abelian groups. Let $G_1 = \langle {}^0 S,  G' \rangle$ be the group generated by ${}^0 S$ and the derived group $G'$ of $G$ (see \cite[Corollary 2.2]{Opdl}). Kottwitz \cite{Ko} defined a natural short exact sequence 
\begin{equation}\label{eq:kotsurk0} 
1\to G_{1} \to G \to \mathrm{Irr}(Z^\vee) \to 1,
\end{equation}
where $Z^\vee$ denotes the centre of $G^\vee$. Moreover, if we take 
invariants for $\mathrm{Gal}(\KK/K)$ we again obtain an exact sequence (cf.~\cite{Ko,HR}): 
\begin{equation}\label{eq:kotsurk0} 
1\to G^{\str}_1 \to G^{\str} \xrightarrow[]{w_G} \mathrm{Irr}(Z^\vee)^\theta \to 1,
\end{equation}
We say that a character $\chi$ of $G^{\str}$ is \emph{weakly unramified} if $\chi$ is trivial on the kernel $G^{\str}_1$ of the Kottwitz homomorphism $w_G: G^{\str}\to \mathrm{Irr}(Z^\vee)^\theta$. We have an identification 
$$
\Omega = \mathrm{Irr}(Z^\vee).
$$ 
Thus, weakly unramified characters of a semisimple reductive group 
form a finite abelian group $X_{wur}(G^{\str})$ which can be identified with $(\Omega^\theta)^*$ (the group of irreducible characters of $\Omega^\theta$).

\subsection{The relative root datum of $G$}\label{par:relRD}

Choose a maximal $K$-split torus $\mbf{S}_d$ of $\mbf{G}$ contained in $\mbf{S\subset G}$.
We will determine the root datum of $\mbf{G}$ with respect to $\mbf{S}_d$ (i.e.~the relative root datum), by the following method given by Springer \cite[Section 15.3.6]{Spr}.

We denote (somewhat awkwardly) the character and cocharacter lattices of $\mbf{S}_d$ by $X^\vee, X$ respectively. 
We see that $X=X_*(\mbf{S})^\theta$. Let $\mathrm{ann}_{X^*(\mbf{S})}(X)$ be the annihilator of $X$ in $X^*(\mbf{S})$. Then $X^\vee \cong X^*(\mbf{S})/\mathrm{ann}_{X^*(\mbf{S})}(X)$. 

Let $V: = \RR\otimes_{\ZZ} X^*(\mbf{S})$ and define a Euclidean inner product on $V$ which is invariant under $N_G(S)/S$.  Using this inner product we identify $V$ with its dual. Let $\mathtt{pr}: V \to \RR \otimes_{\ZZ} X^\vee$ be the natural projection. The image $\mathtt{pr}(\Sigma_0^\vee)$ of $\Sigma_0^\vee$ is a non-reduced root system. We define $R_0^\vee \subset \mathtt{pr}(\Sigma_0^\vee)$ to be the set of non-multipliable vectors. Then $R_0^\vee$ is a reduced root system 
(in $\RR\otimes_{\ZZ} X^*(\mbf{S}_d)$). Let $R_0 \subset X$ be the root system dual to $R_0^\vee$. Then the based relative 
root datum equals $(X^\vee, R_0^\vee,F_0^\vee, X, R_0,F_0)$. The complex algebraic torus $S^\vee \subset G^\vee$ has character lattice $X_*(\mbf{S})$. The complex algebraic torus $T$ with character lattice 
$X=X_*(\mbf{S})^\theta$ can be identified with 
the quotient $T:=S^\vee/(1-\theta) S^\vee$. 
Let $W_0=W(R_0)=W(\Sigma_0)^\theta$ denote the relative finite Weyl group acting on $T$.  \\

In this paper, the root datum $\calr^{\IM}:=(X, R_0, F_0, X^\vee, R_0^\vee, F_0^\vee)$ dual to the relative root datum obtained above is used more frequently. This is the root datum of the Iwahori--Hecke algebra $\ha^{\IM}$ of the quasi-split group $\mbf{G}$. The role played by $\ha^{\IM}$ will be explained in Sections \ref{sub:fduni} and \ref{sub:muSTM}.

\subsection{Classical groups}\label{par:Class} As was pointed out in the Introduction, this paper mainly focusses 
on the case of classical groups. By a \emph{classical group} we mean 
a $K$-form of a connected almost simple group whose root system is classical and whose 
relative root system (in the sense of \cite[Section 15.3]{Spr}) is classical (possibly non-reduced) or empty 
(if the group is $K$-anisotropic). In addition we require (as we do throughout this paper) that the group 
splits over an unramified extension. This definition excludes the quasi-split triality group ${}^3\textup{D}_4$
(which was treated in \cite{Fe2}). It is well known (by classification, see e.g. \cite[Sections 4.3, 4.4]{Tits} for an overview) 
that the only anisotropic cases in this list are those isogenous to $\textup{SL}_1(D)$, where $D$ is a rank $(l+1)^2$ 
(with $l\in\mathbb{N}$) central division algebra over $K$. This is an anisotropic inner twist of $\textup{SL}_{l+1}(K)$.

Observe that our notion of a classical group only depends on the central isogeny class. Every  
central isogeny class of classical groups in this sense has a representative which is the special automorphism 
group of a sesquilinear binary form (possibly $0$) of a division algebra over $K$, see e.g. \cite[Sections 4.3, 4.4]{Tits}.
For a list of such representatives for the unramified groups: See the list at the end of paragraph \ref{sec:DULLPCG}.

We allow arbitrary inner forms of a given unramified classical group (we do not restrict ourselves to pure inner forms). 
\section{Unipotent supercuspidal representations}\label{sec:uptrep}

By \cite{Lusztig-unirep} we know that the 
\emph{supercuspidal unipotent representations} $\pi$ of $G^{\str}$ are the irreducible summands of a compactly induced representation of a cuspidal unipotent representation $\sigma$ of a maximal parahoric subgroup 
$\PH^{\str} \subset G^{\str}$, and $\pi$ determines the $G^{\str}$-conjugacy class of $(\PH^{\str},\sigma)$. 
Here $\PH$ is an $\str$-stable maximal parahoric subgroup of $G$. \

The pair $(\PH^{\str},\sigma)$ is a \textit{type} in the sense of \cite{BK}. Attached to this \textit{type} is a Hecke algebra (via the last theorem in \cite{MP}, see also \cite{Mo}). Lusztig \cite[1.20]{Lusztig-unirep} explicitly described the structure of this Hecke algebra. In particular, when $\PH^{\str} \subset G^{\str}$ is maximal, the Hecke algebra of this \textit{type} is isomorphic to 
the group ring $\CC[\Omega^{\theta,\PH}]$, where $\Omega^{\theta, \PH}$ is defined as follows. Fix an $\str$-stable Iwahori subgroup $\mathbb{I}\subset G$. 
Consider the set of $G^{\str}$-conjugacy classes of $\str$-stable parahoric subgroups $\PH\subset G$. 
Each such class has representatives $\PH$ such that $\mathbb{I}\subset \PH$. 
 Using the isomorphism $\Omega \simeq N_{G}(\mathbb{I})/\mathbb{I}$, we see that the group $\Omega^\theta$  
acts on the set of such $G^{\str}$-conjugacy classes of $\str$-stable parahoric subgroups $\PH\subset G$. 
We define $\Omega^{\theta, \PH}\subset \Omega$ as the subgroup of elements fixed by $\theta$ and fixing $\PH$. 

Given a \textit{type} $(\PH^{\str}, \sigma)$, fix an extension of $\sigma$ to $N_{G^{\str}}(\PH^{\str},\sigma)$. The set of irreducible components 
of $\cInd_{\PH^{\str}}^{G^{\str}} \sigma$ is in canonical bijection with the set
$(\Omega^{\theta,\PH})^*$ of irreducible characters of $\Omega^{\theta,\PH}$. In particular this set is an orbit under the natural action of the group 
$(\Omega^\theta)^*$ of weakly unramified characters.

\subsection{Formal degrees of supercuspidal unipotent representations} \label{sub:fdsup} 

We want to single out the discrete unramified LLP $\varphi$ whose adjoint $\gamma$-factor 
$\gamma(\varphi,q)$ is equal (up to nonzero rational constant factors) to the formal degree of a unipotent 
supercuspidal representation of 
$G^{\str}$. We start by compiling the list of such supercuspidal unipotent formal degrees for classical groups. 

Following \cite{DeRe}, we normalize the Haar measure on the locally compact group $G^{\str}$ 
by the rule that (with $\overline{\PH}^{\str}$ the reductive quotient of $\PH^{\str}$) 
$$
\mathrm{vol}(\PH^{\str}) = v^{-\mathrm{rk}(\mbf{G})} |\overline{\PH}^{\str}|_{p'}
$$ 
for any parahoric subgroup $\PH^{\str}$ of $G^{\str}$. 
Here, $\mathrm{rk}(\mbf{G})$ is the absolute rank of $\mbf{G}$, 
and $|\overline{\PH}^{\str}|_{p'}$ is the largest factor of 
$|\overline{\PH}^{\str}|$ prime to $\mathrm{char}\,\frak{F}$. 
The factor $|\overline{\PH}^{\str}|_{p'}$ can be determined using the list in \cite[\S 2.9]{Carter}. Then, for each $\chi\in(\Omega^{\theta,\PH})^*$, 
the formal degree of the corresponding irreducible cuspidal unipotent 
summand $\pi_\chi$ of $\cInd_{\PH^{\str}}^{G^{\str}} \sigma$ equals (cf.~\cite{BK})\begin{equation}\label{eq:fdegdef}
\fdeg(\pi_\chi, q):=|\Omega^{\theta, \PH}|^{-1} \mathrm{vol}(\PH^{\str})^{-1} \deg(\sigma).
\end{equation}

Let us now concentrate on the isogeny classes of classical groups $\mbf{G}$ (unitary, special orthogonal and symplectic) other than the easy case $\mbf{G}=\mbf{PGL}_n$. 
A maximal $F_u$-stable parahoric subgroup $\PH$ 
corresponds to a maximal $u\theta$-stable subdiagram $\mathsf{J}$ in the affine Dynkin 
diagram $\tisfI$ of $G$ 
(where $u=z(F)\in Z^1(F, G_{ad})$ such that $\omega=[z]$). 
By Lusztig's work on cuspidal unipotent characters for finite groups of Lie type (see e.g.~\cite[\S 13.7]{Carter}), the subdiagram $\mathsf{J}$ 
must be one of the following six cases (we exclude the triality ${}^3 \DDD_4$):
\begin{enumerate}
\item[(i)] $\tisfI= \widetilde{ {}^2\AAA_n }, \mathsf{J}_{a,b} = {}^2\AAA_s \sqcup{}^2\AAA_t$, where 
$s=(a/2)(a+1)-1, t=(b/2)(b+1)-1$ for some $a, b \in \ZZ_{\geq0}$ and $s+t+1=n \geq 2$. Here ${}^2\AAA_x$ represents the empty diagram if $x\in \{-1, 0\}$.
We write $n=2l$ or $n=2l-1$ depending on the parity of $n$.

\item[(ii)] $\tisfI = \widetilde{ \BBB_l }$ and $\mathsf{J}_{a,b}  = \DDD_s\sqcup \BBB_t$ ($a$ even), or $\tisfI = {}^2\widetilde{\BBB_l}$ 
and $\mathsf{J}_{a,b}={}^2\DDD_s \sqcup\BBB_t$ ($a$ odd), 
where 
$s=a^2, t=b(b+1)$ for some $a,b\in \ZZ_{\geq0}$, 
and $s+t=l \geq 2$. Here $\DDD_x$ represents the empty diagram 
if $x=0$, and ${}^2\DDD_x$ represents the empty diagram if $x=1$. 

\item[(iii)] $\tisfI = \widetilde{\CCC_l}, \mathsf{J}_{a,b}  = \CCC_s \sqcup\CCC_t$, where $s=a(a+1), t=b(b+1)$ for some $a, b\in \ZZ_{\geq0}$ and  $s+t=l \geq 2$;

\item[(iv)] $\tisfI = \widetilde{\DDD_l}, \mathsf{J}_{a,b} = \DDD_s \sqcup \DDD_t$ ($a,b$ even) or 
${}^2\DDD_s \sqcup {}^2\DDD_t$ ($a,b$ odd) 
where $s=a^2, t=b^2$ for some $a, b \in \ZZ_{\geq0}$, with $a\equiv b \pmod{2}$ and $s+t=l \geq 4$. Or 
$\tisfI = \widetilde{{}^2\DDD_l}, \mathsf{J}_{a,b} = \DDD_s \sqcup {}^2\DDD_t$  ($a$ even) or 
$\mathsf{J}_{a,b} = {}^2\DDD_s \sqcup \DDD_t$  ($b$ even)
where $s=a^2, t=b^2$ for some $a, b \in \ZZ_{\geq0}$ with $a\not\equiv b\pmod{2}$ and $s+t=l \geq 4$;

\item[(v)] $\tisfI= {}^2\widetilde{\CCC_l}$,  
$\mathsf{J}_{a,b}  = {}^2\AAA_s \sqcup \CCC_t\sqcup  \CCC_t$, where $s=(a/2)(a+1)-1, t=b(b+1)$ for some 
$a, b\in \ZZ_{\geq0}$ and $s+2t+1=l \geq 2$;

\item[(vi)] $\tisfI = {}^2\widetilde{\DDD_l}$ ($l$ even) or ${}^4\widetilde{\DDD_l}$ ($l$ odd) 
or ${}^2(\widetilde{{}^2\DDD_l})$ ($l$ odd) or  ${}^4(\widetilde{{}^2\DDD_l})$ ($l$ even), where the 
inner left superscript refers to the order of $\theta$, a finite type $\DDD$-diagram automorphism, and the 
outer left superscript $2$ or $4$ indicates the order of the affine diagram automorphism $u\theta$, where 
$u\in\Omega$ is such that it does not commute with the unique nontrivial finite type $\DDD$-automorphism,  
$\mathsf{J}_{a,b} ={}^2\AAA_s \sqcup\DDD_t \sqcup\DDD_t$ ($b$ even, or equivalently $\textup{ord}(u\theta)=2$)
or ${}^2\AAA_s \sqcup{}^2\DDD_t \sqcup{}^2\DDD_t$ ($b$ odd, or equivalently $\textup{ord}(u\theta)=4$), 
where $s=(a/2)(a+1)-1, t=b^2$ for some $a, b \in \ZZ_{\geq 0}$ and 
$s+2t+1=l \geq 4$.  
\end{enumerate}

Cases (v) and (vi) are associated with unipotent types of the non-split inner forms of $\mbf{G}$ determined by $\omega \in \Omega_{ad}/(1-\theta) \Omega_{ad}$ where via Kottwitz's isomorphism, $\omega$ corresponds to the action of ${}^LZ_{ad}$ in the spin representations of ${}^LG_{ad}$. We call these cases \emph{extra-special} (the reason will become clear in next section).\\

Working out \eqref{eq:fdegdef} in these the cases yields the following expressions for the supercuspidal unipotent formal degrees for some rational constant factors $C_{a,b}$: 
\begin{equation}\label{eq:classicalfdeg}
C_{a,b}^{-1}\textup{vol}(\mathbb{P}_{\mathsf{J}_{a,b}})\fdeg(\pi_\chi,q)=
\begin{cases}
d_a^{ \{{}^2\AAA\}}(q) d_b^{\{ {}^2\AAA\}}(q) & \text{ in case (i)};\\ 
d_a^{\DDD}(q) d_b^{\BBB}(q) & \text{ in case (ii)}; \\
d_a^{\BBB}(q) d_b^{\BBB}(q) & \text{ in case (iii)}; \\
d_a^{\DDD}(q) d_b^{\DDD}(q) & \text{ in case (iv)}; \\
d_a^{\{ {}^2\AAA\} }(q) d_b^{\BBB}(q^2) & \text{ in case (v)}; \\
d_a^{\{ {}^2\AAA\} }(q) d_b^{\DDD}(q^2) & \text{ in case (vi)}.
\end{cases}
\end{equation}
Here $d_a^{\BBB}(q)$ denotes the degree of the cuspidal unipotent representation of  
$\mathsf{SO}_{2l+1}(\mathbb{F}_q)$ (or equivalently, of $\mathsf{Sp}_{2l}(\mathbb{F}_q)$) 
where $l=a(a+1)$, $d_a^{\DDD}(q)$ denotes the degree of the cuspidal unipotent representation of  
$\mathsf{SO}_{2l}(\mathbb{F}_q)$ (if $a$ even) or of $\mathsf{SO}^*_{2l}(\mathbb{F}_q)$ (if $a$ odd) with $l=a^2$,  
and $d_a^{\{ {}^2\AAA\} }(q)$ denotes the degree of the cuspidal unipotent representation of  
$\mathsf{SU}_{l+1}(\mathbb{F}_{q})$ with $l=(a/2)(a+1)-1$. 
These cuspidal unipotent degrees   
are listed in e.g. \cite[\S 13.7]{Carter}, and the cardinalities $|\mathsf{G}(\mathbb{F}_q)|$
(needed to compute $\textup{vol}(\mathbb{P}_{\mathsf{J}_{a,b}})$) are given in 
\cite[\S 2.9]{Carter}. 

These six cases cover the supercuspidal unipotent representations of the classical groups, except for the case of the weakly unramified characters of the anisotropic inner form of $\mbf{PGL}_{n+1}$.
By \eqref{eq:fdegdef} we see that in this case (see \cite[2.2.3]{Opdl}) the formal degree of a supercuspidal representation equals $(n+1)^{-1}[n+1]_q^{-1}$ 
(where $[n]_q:=\frac{v^n-v^{-n}}{v-v^{-1}}$ denotes a $q$-integer). From these explicit formulae we check that:
\begin{cor}\label{cor:sc} 
For every supercuspidal unipotent representation $\pi_\chi$ of an almost simple 
classical group over $K$, the function $\fdeg(\pi_\chi,q)^{-1}$ is the 
product of a nonzero rational constant, a power of $v$ (with 
$q=v^2$), and a polynomial in $q$ which is a product of cyclotomic factors. 
The parahoric group $\mathbb{P}_\mathsf{J}$ from which $\pi_\chi$ is induced 
is determined, up to isomorphisms, by the multiplicities of the cyclotomic factors of $\fdeg(\pi_\chi,q)^{-1}$
(in particular, the set $\{a,b\}$ such that $\mathsf{J}=\mathsf{J}_{a,b}$ in the cases above is determined by these multiplicities). 
Except for the anisotropic case, $\fdeg(\pi_\chi,q)^{-1}$ has only \emph{even} cyclotomic factors. 
\end{cor}

\section{Adjoint $\gamma$-factors of discrete unramified Langlands parameters}\label{sec:adgamma}

We will discuss discrete unramified local Langlands parameters and the notion of formal degree on the parameter side of the local Langlands correspondence. 

\subsection{Reduction to the Iwahori--spherical case}\label{sub:fduni}

Hiraga, Ichino and Ikeda \cite{HII} conjectured that the formal degree of a discrete series representation $\pi$ of the group of 
points of a reductive group over a local field is, up to a power of $v$ and a nonzero rational constant factor, 
equal to the so-called adjoint $\gamma$-factor $\gamma(\varphi_\pi)$ (see also \cite[Conjecture 7.1]{GR}) of the local Langlands parameter $\varphi_\pi$ of $\pi$. 
Even before the HII-conjecture, Reeder \cite{Re} had shown this for unipotent representations of split simple 
exceptional groups of adjoint type defined over a non-archimedean local field. 
In general, assuming that the LLP is given by Lusztig's parameterization of unipotent representations, the second mentioned author obtained: 
\begin{thm}(\cite{Re}, \cite[Theorem 4.11]{Opdl})\label{thm:HIIupt2}
Let $\mbf{G}^u$ be an inner form of an unramified group $\mbf{G}$ of adjoint type 
defined over a non-archimedean local field $K$ corresponding to 
$\omega=[u]\in H^1(F,G)$. 
Let $\pi$ be an irreducible unipotent discrete series representation of $G^{F_u}$. 
Let $\varphi$ denote the discrete unramified LLP 
for $\pi$ as defined in \cite{Lusztig-unirep}, 
and let $\rho$ be the local system on the $G^\vee$-orbit of $\varphi$ associated with $\pi$.
Let us write $\pi=\pi_{(\varphi,\rho)}$.
Then
$\textup{fdeg}(\pi_{(\varphi,\rho)},q)=\pm \frac{\textup{dim}(\rho)}{|A_\varphi|}v^{\textup{-dim}(\mbf{G})}\gamma(\varphi,q)$. 
\end{thm}

By Theorem \ref{thm:HIIupt2}, the formal degree $\textup{fdeg}(\pi_{(\varphi,\rho)},q)$ of $\pi_{(\varphi,\rho)}$ is, modulo nonzero rational constant factors, 
independent of $\rho$. In this case of groups of adjoint type we know by \cite{ReWmodel}, that there is a unique generic member in Lusztig's 
packet of unipotent representations associated with $\varphi$, namely $\pi_{(\varphi,\rho)}$ with $\rho$ equal to the trivial representation. This generic representation 
is an Iwahori--spherical representation of the quasi-split adjoint group $G^{F}$. 
In particular, modulo nonzero rational constant factors, 
the list of formal degrees of unipotent discrete series characters of the groups $G^{\str}$, 
coincides with the list of formal degrees of the subset of Iwahori--spherical discrete series representations of $G^F$.\\

Recall that the category of admissible irreducible Iwahori--spherical representations of $G^{F}$ is equivalent to the category of simple modules of the Iwahori--Hecke algebra $\ha^{\IM}$ of $G^{F}$ (cf.~\cite{BK}). Under this equivalence, the Plancherel measure 
of $G^{F}$ corresponds to the spectral measure of the trace of the canonical Hilbert algebra structure of $\ha^{\IM}$. 
This spectral measure was computed in \cite{Opd3} in terms of a rational function $\mu=\mu^{\IM}$ on 
the complex torus $T$ with $X^*(T)=X^*(\mbf{S})^\theta$ (see Section \ref{par:relRD}).
Let $\pi$ be an Iwahori spherical discrete series character of $G^F$, 
and let $\delta_\pi$ be the corresponding discrete series character of $\ha^{\IM}$.
In \cite{Opdl, Opd3} the formal degree of $\delta_\pi$ was computed, 
up to nonzero rational constant factors, as a ``residue" of $\mu^{\IM}$ at the character $\chi_\pi\in W_0\backslash T$ by which the centre of $\ha^{\IM}$ acts on $\delta_\pi$. By 
the above remarks, this residue of $\mu^{\IM}$ at $\chi_\pi$ is equal to the formal 
degree of $\pi$.\\

In order to connect this with the adjoint gamma factors of discrete unramified LLP of $G^F$
we will now explain the fundamental bijection between the set of equivalence classes $[\varphi]$ 
of discrete unramified LLP of $G^F$ and the set of central characters $\chi\in W_0\backslash T$ of 
$\ha^{\IM}$ which support discrete series characters of $\ha^{\IM}$. This requires first of 
all a closer look at $\ha^{\IM}$.

\subsection{The Iwahori--Hecke algebra and the Kac diagram}\label{sub:Kac} 

We will use the conventions of \cite[2.1, 2.3]{Opds} for affine Hecke algebras.
An affine Hecke algebra is defined by the based root datum $\calr$ 
and a parameter function $m_R$ on the set of affine roots associated 
with $\calr$. We will express these data for $\ha^{\IM}$
in terms of the Kac root system for $G^\vee\theta$ as discussed in \cite{Retorsion}. 
The root datum $\calr^{\IM}=(X,R_0,F_0,Y,R_0^\vee,F_0^\vee)$ of $\ha^{\IM}$ was 
described in Section \ref{par:relRD}. 
A $\theta$-orbit $\iota$ in the affine extension of the Dynkin diagram of $(\Sigma_0^\vee,\Delta_0^\vee)$ 
corresponds to a node of the affine extension of the Dynkin diagram of $(R_0^\vee,F_0^\vee)$, and the 
corresponding Hecke generator $T_\iota$ of $\ha^{\IM}$ satisfies the Hecke relation:
\begin{equation}
(T_\iota+1)(T_\iota-q^{|a_\iota|})=0
\end{equation}
where $a_\iota$ denotes the equivalence class of coroots in $\Sigma_0^\vee$ associated with $\iota$ 
as defined in \cite[\S 3.3]{Retorsion} (this is an orbit of (orthogonal) coroots under $\langle\theta\rangle$, 
except for one case in ${}^2A_{2n}$, where it is a union of two $\langle\theta\rangle$-orbits). 
This determines the parameter function $m_R^{\IM}$ on $\calr^{\IM}$. 
We will provide the list of these parameter values explicitly in all relevant cases below.\\

The parameters of $\ha^{\IM}$ can best be expressed in terms of the parameter 
function $m_R^\vee$ (\cite[2.1.5]{Opds}) of the spectral diagram for $\ha^{\IM}$, extended 
linearly to a function on the so-called Kac roots of $G^\vee \theta$ (\cite[(25)]{Retorsion}), as we will now see. 
In \cite[Definition 2.10]{Opds} a root system $R_m$ is introduced whose roots are certain multiples of those of $R_0$. 

Recall that $T=S^\vee/(1-\theta)S^\vee$ is 
a quotient of $(S^\vee)^\theta$ with kernel $(S^\vee)^\theta\cap (1-\theta)S^\vee$, which is a subgroup of the 
$f$-torsion subgroup of $(S^\vee)^\theta$, where $f$ denotes the order of $\theta$ 
(cf. \cite[Section 3]{Retorsion}). The roots of $R_m$, when lifted to $(S^\vee)^\theta$, 
are the roots called $\gamma_\iota$ in \cite[\S 3.3]{Retorsion}. In other words 
we have $R_m=\Phi_\theta$, in the notation of \cite{Retorsion}, viewed as 
root systems in the character lattice of $(S^\vee)^\theta$.
The {\emph{spectral diagram}} (\cite[2.3.3]{Opds}) 
of $\mathcal{H}^{\IM}$ is the diagram of the ordinary affine extension $R_m^{(1)}$ of $R_m$, 
with simple affine roots of the form $a^\vee_\iota$. 

Each node in the spectral diagram has a label given by a multiplicity function $m_R^\vee$ defined in 
\cite[(5)]{Opds}. 
Going through the list of cases, one easily verifies that 
$m_R^\vee(a^\vee_\iota)=f_\iota$ with $f_\iota$ as in \cite[Table 1]{Retorsion}.  
The Kac diagram of $G^\vee\theta$ (denoted as $\mathcal{D}(\frak{g}, \theta)$ in \cite[Table 1]{Retorsion}) 
is a twisted affine root system of characters on $(S^\vee)^\theta$ obtained from the spectral diagram 
$R_m^{(1)}$ by dividing its affine simple roots by their 
multiplicities $f_\iota$. Hence we have the following: 
\begin{prop}\label{prop:m}
The multiplicity function $m_R^\vee$ (extended linearly) has constant value $1$ 
on the affine simple Kac root system $\mathcal{D}(\frak{g}, \theta)$.
\end{prop}

Another important aspect of $\ha^{\IM}$ is its isogeny type. By \cite[(3)]{Opds} we see that 
the dual affine Weyl group $W^\vee$ associated with $\calr^{\IM}$ is of the form   
\begin{equation}\label{eq:dualaff}
W^\vee:=W_0\ltimes Y=W((\calr^{\IM, \max})^\vee)\rtimes\Omega_Y^\vee
\end{equation}
with $\calr^{\IM, \max}=(P(R_m), R_0, F_0, Q(R_m^\vee), R_0^\vee, F_0^\vee)$, the maximal 
possible extension of $\calr^{\IM}$ obtained by replacing $X$ by $P(R_m)$, and where
\begin{equation}\label{eq:OmY}
\Omega_Y^\vee=Y/Q(R_m^\vee).
\end{equation}
Observe that $W^\vee$ equals the extended affine Weyl group $\widetilde{W_\theta}$ of 
\cite[3.2]{Retorsion}, although our lattice $Y$ is the projection onto the space of $\theta$-invariants 
of the lattice $X^*(\mathbf{S})$. (This is a more general situation than that of \cite{Retorsion}, where 
only the case $X^*(\mathbf{S})=P(\Sigma_0^\vee)$ is being considered.) Notice that 
$W((\calr^{\IM, \max})^\vee)$ is the (unextended) affine reflection group of the (untwisted) affine extension 
of $R_m$. It follows from \cite[(2)]{Opds} that 
\begin{prop}\label{prop:OmY}
Let $\ha^{\IM, \max}$ be the maximally extended affine Hecke algebra with root datum 
$ \calr^{\IM, \max}$ and parameter function $m_R$. Then 
\begin{equation}
\ha^{\IM}=(\ha^{\IM, \max})^{\Omega_Y^\vee}.
\end{equation}
\end{prop}
\subsection{Residue points and discrete unramified LLP}\label{sub:RPandLLP}
Recall that an unramified Langlands parameter for $\mbf{G}$ is a homomorphism
$$
\varphi: \Frob^{\ZZ} \times \mbf{SL}_2(\CC) \to {}^L G=G^\vee \rtimes \langle \theta \rangle
$$
such that $\varphi$ is algebraic on the identity component $\{\mathrm{id}\}\times \mbf{SL}_2(\CC) $, 
and such that the image $\varphi(\Frob, \mathrm{id})$ is a semisimple element of $G^\vee\theta$. Two parameters $\varphi$ and $\varphi'$ are regarded as equivalent if they are $G^\vee$-conjugate. 

Since the element 
$\varphi(\Frob, \mathrm{id})=s\theta\in G^\vee\theta$ is semisimple, 
its connected centralizer $H:=C_{G^\vee}(s\theta)^\circ$ is a reductive group. Put 
 $u:=\varphi \big(\mathrm{id}, \left(\begin{smallmatrix} 1 & 1 \\ 0 & 1 \end{smallmatrix}\right) \big)$, then 
$u \in H$ is a unipotent element. It follows that the set of equivalence classes $[\varphi]$
of unramified Langlands parameters is in canonical bijection with the set of 
$\mathrm{Int}(G^\vee)$-orbits of pairs $(s\theta,u)$ 
with $s\theta\in G^\vee\theta$ semisimple, and $u\in C_{G^\vee}(s\theta)^\circ$  unipotent.  \\ 

On the parameter side,  $W_0\backslash T$ can be identified with the $\mathrm{Int}(G^\vee)$-orbits of semisimple elements of $G^\vee\theta$ by the following canonical bijection (cf.~\cite[Proposition 6.7]{Bo})
\begin{equation}\label{eq:Borelss}
\beta: \mathrm{Int}(G^\vee) \backslash (G^\vee\theta)_{ss}\stackrel{\sim}{\to} W_0\backslash T.
\end{equation}
Given a discrete unramified LLP $\varphi$ we put  
$$
\chi=\beta \big( G^\vee\cdot \varphi(\Frob, (\begin{smallmatrix} \mbf{v} &0 \\0 & \mbf{v}^{-1} \end{smallmatrix})) \big)
=W_0\respt\in W_0\backslash T.
$$ 
By \eqref{eq:phi(Frob)}, we can decompose $\respt$ as $\respt=cs$, where $s$ is an isolated 
torsion element of $G^\vee\theta$. 
We have:
\begin{prop}\label{cor:iso}
\begin{enumerate} 
\item[(i)] The above map $[\varphi]\mapsto\chi$ defines a 
canonical bijection between the set of $W_0$-orbits $\chi=W_0\respt$ of residual points of 
$\ha^{\IM}$ in $T$, and the set of equivalence classes $[\varphi]$ of unramified discrete 
Langlands parameters for $\mbf{G}$.
\item[(ii)] The central character of a discrete series character $\delta$ of $\ha^{\IM}$ 
is an orbit $\chi\in W_0\backslash T$ of residual points for $\ha^{\IM}$, and every 
such orbit of residual points is the central character of a nonempty finite set of discrete series 
characters of $\ha^{\IM}$.
\item[(iii)] Every equivalence class of discrete unramified Langlands parameters  
$[\varphi]$ for $\mbf{G}$ is the image of such an equivalence class $[\varphi_{ad}]$  
for $\mbf{G}_{ad}$ under the canonical isogeny. In this way, $[\varphi]$ can be identified 
the the orbit $I_G[\varphi_{ad}]$, where $I_G$ denotes the kernel of the canonical surjection 
$(\Omega^\theta_{sc})^*\to (\Omega^\theta)^*$. 
\item[(iv)] The set of equivalence classes of discrete unramified Langlands parameters for $\mbf{G}_{sc}$ can be 
identified with the set of orbits $(\Omega^\theta)^*[\varphi]$ of such equivalence classes for $\mbf{G}$ under 
the natural action of the group of weakly unramified characters 
$(\Omega^\theta)^*$ of $G^{\str}$. 
\end{enumerate}
\end{prop}
\begin{proof} 
(ii) is \cite[Lemma 3.31]{Opd3}. 

(i) follows from a direct comparison of the classification of equivalence classes $[\varphi]$ of discrete unramified 
LLP (using the results of \cite{Retorsion}) and the classification of the residual points of $\ha^{\IM}$ using 
\cite{OpdSol} and Subsection \ref{sub:Kac}.

An unramfied local Langlands parameter $\varphi$ is 
\emph{discrete} if the centralizer $C_{G^\vee}(\mathrm{Im}(\varphi))$ is finite.
It follows that the 
connected centralizer $H$ of $\varphi(\Frob, (\begin{smallmatrix} 1 &0 \\0 & 1 \end{smallmatrix}))$ 
in $G^\vee$ must be semisimple because of the existence of distinguished unipotent elements in $H$. 
In the terminology of \cite{Retorsion} $\sigma:=\varphi(\Frob, (\begin{smallmatrix} 1 &0 \\0 & 1 \end{smallmatrix}))$ is an \emph{isolated} torsion element. 
By \cite[\S 3.8]{Retorsion} we may assume that $\sigma=\sigma_\iota=\exp(v_\iota)\theta$, where $v_\iota$ is a 
vertex of the alcove $C$ of $W((\calr^{\IM, \max})^\vee)$ ($v_\iota$ corresponds canonically with a 
node of the Kac diagram of $G^\vee\theta$) 
by possibly replacing $\varphi$ by an equivalent LLP.
Even though we are in a more general situation than  \cite{Retorsion} (since $G^\vee$ need 
not be of adjoint type) one can check that the arguments required to reach the 
conclusions of \cite[\S 3.6]{Retorsion} still hold. 
It follows that the component group of $G^{\sigma_\iota}$ is isomorphic to 
(in our notations) the isotropy group $\Omega_{Y,\iota}^\vee\subset \Omega_Y^\vee$ of 
$\sigma_\iota\in \overline{C}$. It follows easily that the equivalence classes 
$[\varphi]$ of discrete unramified LLP are parameterized by the set of 
$\Omega_Y^\vee$-orbits $\Omega_Y^\vee v_\iota$ of vertices of $C$, and for each such 
orbit, the set of $\Omega_{Y,\iota}^\vee$-orbits of distinguished nilpotent orbits
of $\mathfrak{g}^{\sigma_\iota}$. On the other hand, consider the classification 
of the discrete series representations of $\ha^{\IM}= (\ha^{\IM, \max})^{\Omega_Y^\vee}$ 
(see Proposition \ref{prop:OmY}) 
of \cite[Theorem 8.7]{OpdSol} and their central characters. 
We have that $\Gamma$ (in the notation of \cite[Section 8]{OpdSol}) is equal 
to $\Gamma=\Omega_Y^\vee$. 
By Proposition \ref{prop:m} we see that the 
$\Omega_{Y,\iota}^\vee$-extended graded affine Hecke algebras $\mbf{H}_{\iota}$ in 
\cite[Theorem 8.7]{OpdSol} has underlying root system equal to that of $\mathfrak{g}^{\sigma_\iota}$, 
and \emph{equal parameters} $k=2\log(q)$. Then \cite[Appendix B]{Opd3} implies that 
the orbits of linear residual points for $\mbf{H}_{\iota}$ are the $\Omega_{Y,\iota}^\vee$-orbits 
of $W_{0,\iota}$-orbits of the weighted Dynkin diagrams of the distinguished nilpotent 
orbits of $\mathfrak{g}^{\sigma_\iota}$, multiplied by $k$. 
By these explicit descriptions we see that above map  $[\varphi] \mapsto\chi$ is a bijection, using 
that the distinguished nilpotent orbits of  $\mathfrak{g}^{\sigma_\iota}$
are classified by the $W_\iota$-orbits of the weighted Dynkin diagrams of $\mathfrak{g}^{\sigma_\iota}$.  

Now (iii) and (iv) follow easily, using (i), the fact that the map $[\varphi] \mapsto \chi$ is 
$(\Omega^\theta)^*$-equivariant, and using the covering map $T_{sc}\to T$ (whose kernel 
is naturally isomorphic to $I_G$).
\end{proof}
\begin{cor}\label{cor:LLP}
We may (an will) assign an equivalence class $[\varphi_\pi]$ of 
discrete unramified LLP to an Iwahori--spherical discrete series representation $\pi$ of $G^F$ 
by the condition that the image $\chi_\pi$ under the bijection of Proposition \ref{cor:iso}(i) 
is the central character of the discrete series character $\delta_\pi$ of $\ha^{\IM}$ corresponding 
to $\pi$.
\end{cor}
\subsection{Discrete unramified Langlands parameters for classical groups}\label{sec:DULLPCG}

We keep the notations in the previous subsections, where in this section $\mbf{G}$ will be an unramified classical group. More precisely, we will assume that $\mbf{G}$ is either $\mbf{SL}_{l+1}$, or otherwise 
a classical group such that the root datum root datum $\calr^{\IM}=(X,R_0,F_0,Y,R_0^\vee,F_0^\vee)$ of $\ha^{\IM}$
is of classical type with $X=\ZZ^l$ for some $l$. In this situation $T=T^l=(\CC^\times)^l$, with its natural 
coordinates $x_1,\dots,x_l$ given by the standard basis of $X$. 

Let $s\theta$ be a semisimple element of $G^\vee \theta$. 
By \cite[Lemma 3.2]{Retorsion} we may and will take $s\in (S^{\vee})^\theta$.
Consider its image $t\in T=T^l$, which is completely determined by its 
coordinates $t=(t_1,\dots,t_l)$. The $G^\vee$-orbit of $s\theta$ 
is completely determined by $Wt\subset T$, by Borel's well known result 
(\ref{eq:Borelss}).
Suppose that $s\theta$ is an isolated 
torsion element. Then, possibly after replacing $s\theta$ by an element 
conjugate to $s\theta$ under the conjugation action of $G^\vee$, we may 
assume that $s\theta=s_\iota\theta$, where $\iota$ corresponds to a node 
of the Kac diagram. Let $t_\iota\in T$ denote its image in $T$.  By definition 
it is clear that $\gamma_{\iota'}(t_\iota)=1$, where $\gamma_{\iota'}$ 
runs over the affine simple roots of $R_m^{(1)}$ other than $\gamma_\iota$, 
and $\gamma_\iota(t_\iota)=\zeta_\iota$  with $\zeta_\iota$ is a primitive 
$c_\iota$-th root of unity (using the notation of \cite[\S 3.3]{Retorsion}). 
Setting aside the trivial type $\textup{A}_l$-case, we see from 
\cite[Table 1]{Retorsion}) that $\zeta_\iota=\pm 1$. 
It follows easily that we may take $t_\iota$ of the form 
$t_\iota=(-1,\dots,-1,1,\dots,1)$. All the $G^\vee$-orbits of isolated 
torsion points have a unique representation of this form (but not necessarily 
all points $t\in T^l$ of this form represent a $G^\vee$-orbit of isolated 
torsion points. For example for $\mbf{SO}_{2l+1}$ there are $l+1$ 
orbits of isolated torsion points, but for $\mbf{SO}_{2l}$ there are $l-1$ 
such orbits). Finally for the type $\textup{A}$-case $\mbf{SL}_{l+1}$ the only orbit of 
isolated torsion elements is $\{\textup{Id}\}$. 

We have seen in the proof of Proposition 
\ref{cor:iso}(i) that the element $s\theta=\varphi(\Frob, \mathrm{id})\in G^\vee\theta$ 
has to be an isolated torsion element if $\varphi$ is a discrete unramified LLP of $G^F$. 
Consequently, $s$ corresponds to a ($\Omega_Y^\vee$-orbit of) node(s) in the Kac diagram of $G^\vee \theta$.
If $R_m$ is of type $\textup{B}_l$ or $\textup{D}_l$ then $\Omega_Y^\vee=C_2$, with the nontrivial 
element $\omega$ acting on $T^l$ as $\omega(x_1,\dots,x_l)=(x_1^{-1},x_2,\dots,x_l)$ 
in the type $\textup{B}_l$-case, and $\omega(x_1,\dots,x_l)=(x_1^{-1},x_2,\dots,x_l^{-1})$ in the 
type $\textup{D}_l$-case. If $R_m$ is of type $\textup{C}_l$ then $X=P$ and so $\Omega_Y^\vee=1$.

In the classical cases other than type $\textup{A}$ we may assume that 
$\varphi(\Frob, \mathrm{id})=s_\varphi\theta\in (S^\vee)^\theta\theta$ as above,   
corresponding to $t_\iota=(-1,\dots,-1,1,\dots,1)\in T$. Then 
$H$ is an almost direct product of at most two 
classical groups $H_-, \,H_+$ whose root system is the complement of the node corresponding 
to $s_\iota$ in the Kac diagram of $G^\vee\theta$. 
The equivalence classes of discrete unramified Langlands parameters $[\varphi]$ 
corresponding to the discrete torsion point $s_\iota\theta$ are in canonical bijection with 
the set of distinguished nilpotent orbits in the maximal semisimple subalgebra 
$\mathrm{Lie}(H):=\frak{h}=\frak{h}_- \oplus \frak{h}_+$ of 
$\frak{g}^\vee:=\mathrm{Lie}(G^\vee)$ 
(since in fact $\Omega_Y^\vee$ acts trivially on this set). \\

For  $\mbf{G}=\mbf{SL}_{l+1}$ the only isolated torsion element of $G^\vee$ is $s=1$. 
Thus $\frak{h} =\frak{g}^\vee= \frak{sl}_{n+1}$.
The classes of discrete unramified parameters $\varphi$ therefore correspond to
the distinguished unipotent classes $u$ of $\frak{sl}_{n+1}$. In this case only the regular 
unipotent orbit is distinguished, and the corresponding central character $\chi_\varphi=W_0\respt_\varphi$  
has the form $\respt_\varphi=c=(q^{-n/2}, q^{(2-n)/2}, \ldots, q^{(n-2)/2}, q^{n/2})$.

For a classical Lie algebra $\frak{h}$ other than type $\AAA$, the set of distinguished nilpotent orbits of $\frak{h}$ is in bijection with the set of partitions $\lambda\vdash N$ with distinct parts where  
$N$ is the dimension of the standard representation of $\frak{h}$ (cf.~\cite{OpdSol}). Moreover, all parts of $\lambda$ have the 
same parity which depends on the type of $\frak{h}$ (see below). 
In view of the above we conclude that, for classical groups other than type $\AAA$,  
an equivalence class $[\varphi]$ of discrete unramified local Langlands parameters  
is given by a pair of partitions $(\lambda_-, \lambda_+)$, where $\lambda_\pm$ is 
a partition with even, distinct parts of $2n_\pm$ if $\frak{h}_\pm=\frak{sp}_{2n_\pm}$, while 
$\lambda_\pm$ is a partition with odd, distinct parts of $N_\pm$ if $\frak{h}_\pm=\frak{so}_{N_\pm}$. 
\begin{definition} We define a ``\emph{parity}'' $\delta_\pm\in\{0,1/2,1\}$ depending on the type of $\frak{h}_\pm$ as follows: 
If $\frak{h}_\pm$ is symplectic (i.e.~if $\lambda_\pm$ has even parts) we put $\delta_\pm=1/2$. 
If $\frak{h}_\pm$ is orthogonal (hence $\lambda_\pm$ has odd parts) then 
we define $\delta_\pm\in\{0,1\}$ as the number of parts of $\lambda_\pm$ 
(or equivalently, of the sum $|\lambda_\pm|=N_\pm$) modulo $2$.  
\end{definition}

Suppose that the equivalence class of a discrete unramified Langlands parameter $\varphi$ of $G^\str$ is given 
by a pair of partitions $(\lambda_-,\lambda_+)$ as above. As explained above, 
$[\varphi]$ is also determined by the corresponding central character 
$\chi_\varphi:=W_0\respt_\varphi\in W_0\backslash T$ with $\respt_\varphi=c_\varphi s_\varphi\in T$. 
In order to compute the adjoint gamma factor $\gamma(\varphi,q)$ up to nonzero rational constant factors 
it is in fact enough to determine $\alpha(\respt_\varphi)$ for all roots $\alpha\in R_0$. 
We will now explain how to compute these values in terms of the pair $(\lambda_-,\lambda_+)$.
The isolated torsion element $s\theta=s_\varphi\theta$ corresponds to a $\Omega_Y^\vee$ orbit 
$\iota$ of vertices of the fundamental alcove $\overline{C}_\theta$ in the vector space $ \textup{Lie}(T)$, 
mapping to an element $t_\iota\in T=T^l$ of the form $t_\iota=(-1,\dots,-1,1,\dots,1)$. 

The ``infinitesimally real" part $c = c_\varphi\in T_v$ (with $T_v$ the vector subgroup $\mathbb{R}_+^l$ 
is determined as follows. 
Write $c=(c_-,c_+)\in \mathbb{R}_+^{l_-}\times \mathbb{R}_+^{l_+}$. 
Then $c_\pm$ is a real residual point denoted by 
\begin{equation}\label{eq:resptreal}
c_\pm=\respt_{\fb,\delta_\pm;\lambda_\pm} (\textrm{or\ sometimes\ simply\ } 
c_\pm=\respt_{\lambda_\pm} \textrm{\ if\ the\ values\ of\ }\fb, \delta_\pm\textrm{\ are\ clear}).
\end{equation}
for the root system of $\frak{h}_\pm$ with equal ``base" parameter 
$q^{\fb}$ (where $\fb\in\{1,2\}$ can be read off from the parameters of $\ha^{\IM}$ (see the list in this section and also the text below Remark \ref{rem:abandm})
and with $\delta_\pm$ the parity defined above.

The coordinates of $\respt_{\fb,\delta_\pm;\lambda_\pm} \in \mathbb{R}_+^{l_\pm}$ 
are of the form $q^{x\fb}$, with  
$x\in \ZZ_{\geq 0}$ (resp.~$x\in \ZZ_{\geq 0} + (1/2)$) when $\delta_\pm=1/2$ (resp.~$\delta_\pm\in\{0,1\}$)
(see \cite[\S 6]{OpdSol}). 
The $W_0$-orbit of $\respt_{\fb,\delta_\pm;\lambda_\pm}$ is therefore completely determined by 
the multiplicities $h_\pm(x)$ of $q^{x\fb}$ as coordinates of $\respt_{\fb,\delta_\pm;\lambda_\pm}$. 
\\

These multiplicities satisfy $h_\pm(x)=h_\pm(x+1)$ or $h_\pm(x)=h_\pm(x+1)+1$. (cf.~\cite{HO,OpdSol}) 
In the second case, we call $x$ a ``jump''. 
When $x>0$, the sequence of jumps is given by the sequence $(\lambda_\pm - \mbf{1})/2$ (see Section \ref{e-s}). Here $\lambda$ is the partition of the distinguished nilpotent element in $\frak{h}_\pm$ corresponding to the Langlands parameter $\varphi$ determined by $\respt$, 
presented as an increasing sequence of integers. If $\delta_\pm=1/2$ then 
$h_\pm$ is supported on $\ZZ_{\geq 0} + (1/2)$, and is determined by the above. 
If $\delta\in\{0,1\}$ then 
$h_\pm$ is supported on $\ZZ_{\geq 0}$. For $x>0$ the value $h_\pm(x)$ is determined 
as above, and we complete the determination of $h_\pm$ by  the rules  
 $h(0) = \lfloor h(1)/2 \rfloor$ if $\delta_\pm=1$ and 
 $h(0)=\lfloor (h(1)+1)/2 \rfloor$ if $\delta_\pm=0$. \\
  
We arrive at the following list of discrete unramified local Langlands parameters for the classical cases: 
\begin{enumerate}
\item[(1)] For $\mbf{G=SL}_{l+1}$, then $s=1$ and $\frak{h}=\frak{sl}_{n+1}$. Thus 
$\lambda = [l+1]$ has only one part, and 
$c=(q^{-n/2}, q^{(2-n)/2}, \ldots, q^{(n-2)/2}, q^{n/2})$; We have $\fb=1$, and $\ha^{\IM}$ is of type $\AAA_l[q]$.
\item[(2)] For $\mbf{G=SU}_{n}$, we distinguish two cases. If $n=2l+1$ is odd, then $\frak{h}_+ = \frak{so}_{2l_++1}$ 
and $\frak{h}_-=\frak{sp}_{2l_-}$ with $l_-+l_+=l$. 
If $n=2l$ is even, then $\frak{h}_+ = \frak{sp}_{2l_+}$ and $\frak{h}_-=\frak{so}_{2l_-}$
with $l_-+l_+=l$. In the first case $\lambda_+ \vdash 2l_++1$ is a partition with odd, distinct parts and $\lambda_- \vdash 2l_-$ is a partition with even, distinct parts. In the second case $\lambda_+ \vdash 2l_+$ is a partition with even, distinct parts and $\lambda_- \vdash 2l_-$ is a partition with odd, distinct parts. In both cases we have $|\lambda_-|+|\lambda_+|=n$.
Finally the base parameter equals $\fb=2$, and $\ha^{\IM}$ is of type $\CCC_l(1/2,1)[q^2]$ (first case) or 
$\BBB_l(1,1/2)[q^2]$ (second case) (notations as in \cite[3.2.1]{Opdl}). We note that $\BBB_l(1,1/2)[q^2]$
spectrally covers $\CCC_l(1/2,0)[q^2]$ (\cite[7.1.3--7.1.4]{Opdl}).
\item[(3)] For $\mbf{G=SO}_{2l+1}$, $\frak{h}_\pm$ are both symplectic, hence $\lambda_{\pm}$ both have even, distinct parts, and $|\lambda_-| + |\lambda_+| = 2l$. We have $\fb=1$, and $\ha^{\IM}$ is of type $\CCC_l(1/2,1/2)[q]$.
\item[(4)] For $\mbf{G=Sp}_{2l}$, one of $\frak{h}_\pm$ is of odd special orthogonal type, and the other one is of even special orthogonal type. So $\lambda_{\pm}$ both have odd, distinct parts, and $|\lambda_-| + |\lambda_+| = 2l+1$.
We have $\fb=1$, and $\ha^{\IM}$ is of type $\BBB_l[q]$, which spectrally covers $\CCC_l(0,1)[q]$.
\item[(5)] For $\mbf{G=SO}_{2l}$ or $\mbf{SO^\ast}_{2l}$.  
In the first case $\frak{h}_\pm$ are both of even special orthogonal type, while in the second case 
they are both of odd special orthogonal type. In both cases $\lambda_\pm$ 
have odd, distinct parts such that $|\lambda_-| + |\lambda_+| = 2l$. We have $\fb=1$, 
and $\ha^{\IM}$ is of type $\DDD_l[q]$ (first case) or $\CCC_l(1,1)[q]$ (second case).
We note that $\DDD_l[q]$ spectrally covers $\CCC_l(0,0)[q]$ (\cite[7.1.4]{Opdl}).
\end{enumerate}

\subsection{Application of spectral transfer maps}\label{sub:muSTM} 
Given a supercuspidal unipotent representation $\pi$ of $\mbf{G}$ we want to determine 
a discrete unramified LLP $\varphi_\pi$ of $\mbf{G}$  
such that $\gamma(\varphi_\pi, q) = C \fdeg(\pi, q)$ for some $C \in \QQ^\times$. In this section, we will explain how 
$\fdeg(\pi, q)$ and $\gamma(\varphi_\pi, q)$ are related to residues of 
$\mu$-functions of affine Hecke algebras, and how spectral transfer morphisms  (STMs) 
of affine Hecke algebras are used to simplify this task. 

For the definition, properties and examples of STMs, the reader is referred to \cite{Opds}. 
In \cite{Opdl}, the role played by STMs towards establishing a connection between formal degrees 
of unipotent discrete series representations of $\mbf{G}^u$ and adjoint gamma factors 
for discrete unramified Langlands parameters of $\mbf{G}$ is explained in detail. 
Three kinds of spectral transfer morphisms are relevant in this section, namely, spectral covering, 
translational and extra--special STMs. \\

Firstly, for classical cases other than type $\AAA$, there always exists a \emph{spectral covering} map 
(i.e. an STM between affine Hecke algebras of equal rank, which is a particularly simple instance of STMs) 
from the Iwahori--Hecke algebra $\ha^{\IM}$ of the adjoint group isogenous to $G^F$, to a unipotent affine Hecke algebra of the form 
$\CCC_l(\delta_-,\delta_+)[q^\fb]$ (see \cite[3.2.6, 3.2.7]{Opdl}). 
By the spectral correspondence result \cite[Theorem 6.1]{Opds} 
this implies that we can compute $\gamma(\varphi,q)$ as the residue of the 
$\mu$-function $\mu^l_{\delta_-, \delta_+}$ of $\CCC_l(\delta_-,\delta_+)[q^\fb]$ at the image $(-\respt_{\lambda_-}, \respt_{\lambda_+})$ of $\respt_\varphi$ under this spectral covering map. Indeed, by \eqref{eq:residue} and \cite[Theorem 6.1]{Opds} we have 
\begin{equation}\label{eq:gammamu}
\gamma(\varphi,q)=C\mu_{\delta_-,\delta_+}^{l;\{(-\respt_{\lambda_-},\respt_{\lambda_+})\}}(-\respt_{\lambda_-},\respt_{\lambda_+})
\end{equation}
for some $C \in \QQ^\times$, and $\respt_{\lambda_\pm}=\respt_{\fb,\delta_\pm;\lambda_\pm}$ is a residual point with positive coordinates as described in \eqref{eq:resptreal}. The relevant parameters $(\delta_-,\delta_+;\fb)$ have been given with the discrete unramified LLP in Section \ref{sec:DULLPCG}. 

Secondly, we look at the translational STM. Let $\CCC_d(m_-,m_+)[q^{\fb'}]$ be a unipotent affine Hecke algebra with parameter $(\emm, \emp; \fb')$. A translational spectral transfer morphism 
\begin{equation}\label{eq:stm}
\psi:\CCC_d(m_-,m_+)[q^{\fb'}]\leadsto \CCC_l(\delta_-,\delta_+)[q^\fb]
\end{equation}
is given by a morphism of the underlying algebraic tori $\psi_T: T^d\to T^l$ ($d \leq l$) with image\footnote{The image is \textit{a priori} a residual coset, cf.~\cite[\S 5.1]{Opds}.} $\textup{Im}(\psi_T)=\mbf{L}\subset T^l$. Suppose $(-\respt_{\lambda_-},\respt_{\lambda_+})\in\mbf{L}$. 
Then by \cite[Proposition 5.2, Theorem 6.1]{Opds} there exists a residual point 
$(-\respt_{\rho_-},\respt_{\rho_+})\in T^d$ for $\CCC_d(m_-,m_+)[q^{\fb'}]$ such that 
\begin{equation}\label{eq:respt}
\psi_T(-\respt_{\rho_-},\respt_{\rho_+})=
(-\respt_{\lambda_-},\respt_{\lambda_+}).
\end{equation}
In this situation there exists \cite[Theorem 6.1]{Opds} a constant $C' \in \QQ^\times$ such that:
\begin{equation}\label{eq:resSTM}
\mu_{\delta_-,\delta_+}^{l;\{(-\respt_{\lambda_-},\respt_{\lambda_+})\}}(-\respt_{\lambda_-},\respt_{\lambda_+})
=C'\mu_{m_-,m_+}^{d;\{(-\respt_{\rho_-},\respt_{\rho_+})\}}(-\respt_{\rho_-},\respt_{\rho_+})
\end{equation}
Combining with \eqref{eq:gammamu} we thus obtain, for some nonzero rational constant factor $C$, that
(after choosing $\psi_T$ appropriately in its equivalence class, cf.~\cite[Definition 5.9]{Opds}): 
\begin{equation}\label{eq:adjresmu}
\gamma(\varphi,q)=C\mu_{m_-,m_+}^{d;\{(-\respt_{\rho_-},\respt_{\rho_+})\}}(-\respt_{\rho_-},\respt_{\rho_+});
\textrm{\ with\ } 
\psi_T(-\respt_{\rho_-},\respt_{\rho_+})=(-\respt_{\lambda_-},\respt_{\lambda_+})=\psi_{0,T}(\respt_\varphi)
\end{equation}

An important special case of \eqref{eq:stm} is when $d=0$. In this case, $\CCC_0(m_-,m_+)[q^{\fb'}]$ is a 
direct summand of the 
the (rank $0$) Hecke 
algebra of a maximal cuspidal \textit{type} $(\PH^{\str},\sigma)$ of $G^{\str}$ 
corresponding to a maximal proper $u\theta$-stable subset $\mathsf{J}_{a,b}\subset \tisfI$ as in Section \ref{sub:fdsup}. 
The unique residual point of this rank $0$ Hecke algebra is denoted by  
$(-\respt_{\rho_-},\respt_{\rho_+})$, and its image 
$\psi_T(-\respt_{\rho_-},\respt_{\rho_+})=\mbf{L}=:\{(-\respt_{\lambda_-},\respt_{\lambda_+}) \}$ is a residual point 
in $T^l$. 
In this case, \eqref{eq:adjresmu} reduces to: 
\begin{equation}\label{eq:deggamma}
\gamma(\varphi,q)=C\textup{fdeg}(\pi_\chi) 
\end{equation}
where $\pi_\chi$ is any element of the finite set of irreducible cusipdal unipotent characters associated with $\mathsf{J}_{a,b}\subset \tisfI$.


\begin{rem}\label{rem:abandm}
The relation between the maximal subset $\mathsf{J}_{a,b}\subset \tisfI$ and the parameters 
$(m_-,m_+;\fb)$ is given by equation (34) (which we recover as \eqref{eq:sets} below) and the list of STMs in \cite[3.2.6]{Opdl}. Our main result,
Theorem \ref{thm:A} states that equation \eqref{eq:deggamma} for a given maximal subset 
$\mathsf{J}_{a,b}$ determines $(\lambda_-,\lambda_+)$ (up to 
obvious symmetry in case $\delta_-=\delta_+$). 
\end{rem}

It is thus useful to give the parameters $m_\pm$ and $\fb'$ explicitly. It is known \cite[(32)]{Opdl} that $m_\pm\in(\ZZ/4)_{\geq 0}$ and $\fb' \in\{1,2\}$ satisfy 
$\emp \pm \emm \in\ZZ/2$, and $\fb'=1$ if and only if both $\emp \pm \emm \in\ZZ$. 

The parameters $\{\emm, \emp\} $\footnote{We view $\{x,y\}$ as a multiset, so we still consider $\{x,y\}$ as a pair even if $x=y$.} by the following rules are determined in each case (i) to (vi) in Section \ref{sub:fdsup} by the following rules described in \cite[(34)]{Opdl}:
\begin{equation}\label{eq:sets}
\{ |\emp - \emm|, \emp+ \emm\} = \begin{cases}
\{ 1/2+a, 1/2+b\}  &\text{case (i)};\\
\{ 2a, 1+2b\}  &\text{case (ii)};\\
\{ 1+2a, 1+2b\} &\text{case (iii)};\\
\{ 2a, 2b\} &\text{case (iv)};\\
\{ 1/2+a, 1+2b\} & \text{case (v)}; \\
\{ 1/2+a, 2b\} &\text{case (vi)}.
\end{cases}
\end{equation}
This results in a partition of the set $\mathcal{V}$ of relevant parameters $(m_-,m_+)$, making it as a disjoint union of six subsets $\mathcal{V}^X$ with 
$\mathcal{X} \in \{\text{I, II, III, IV, V, VI}\}$, according to the cases (i) to (vi) in Section \ref{sub:fdsup}.\\

In order to define the corresponding $\mu$-function $\mu^l_{m_-, m_+}$ on $T^l$ as in \cite[Definition 3.2]{Opds}, we need to know the parameters $m_{\pm}(\alpha)$ for $\alpha\in R_0$, and we need to know 
the normalization factor of the $\mu$-function. As to the former, 
we note that here $R_0 \subset X^*(T^l)$ is of type $\BBB_l$, whose positive roots are $t_i$ (with $1\leq i\leq l$) 
and $t_it_j^{\pm 1}$ (with $1\leq i<j\leq l$). We thus define the $m_{\pm}(\alpha)$ by: 
\begin{equation}\label{eq:pars}
m_-(t_it_j^{\pm 1})=0, \, m_+(t_it_j^{\pm 1})=\fb'; \,m_-(t_i)=\fb' m_-, \,m_+(t_i)=\fb' m_+.
\end{equation}
The normalization factor of $\mu_{\emm, \emp}^l$ is given by 
$\tau_{m_-,m_+}(1):=(v^{\fb'}-v^{-\fb'})^{-l} d^\tau_{m_-,m_+}$ 
with 
$d^\tau_{m_-,m_+}$ given by \cite[(33)]{Opdl} (when the rank $l=0$ 
this equals $\fdeg(\pi_\chi,q)$ as defined by \ref{eq:classicalfdeg}). 
Observe that this is well-defined because $\{m_-,m_+\}$ determines $a$ and $b$ 
in the cases $\text{II, V}$ and $\text{VI}$, and determines $a$ and $b$ up to order in the other cases.\\

To introduce the extra-special STM we introduce some further notations for the cases $\text{V}$ and $\text{VI}$: 
if $m_{\pm} \in \ZZ \pm (1/4)$ and $m_{\pm} >0$ we write 
\begin{equation}\label{eq:esm}
m_{\pm} = \kappa_{\pm} + \frac{2\epsilon_{\pm} -1}{4}
\end{equation}
with $\epsilon_{\pm} \in \{0, 1\}$ and $\kappa_{\pm} \in \ZZ_{\geq 0}$. We define 
$\delta_{\pm} \in \{0, 1\}$ by $\kappa_{\pm}\equiv \delta_{\pm} \pmod{2}$. 
Observe that in case $\text{V}$ we have $\delta_-\not=\delta_+$ while in case $\text{VI}$ 
we have $\delta_-=\delta_+$.

Equation \eqref{eq:adjresmu} enables us to rewrite $\gamma(\varphi,q)$ in other ways, 
useful for the analysis of the difficult cases $\delta_\pm\in\{0,1\}$. We will show in Section \ref{e-s} that for every partition $\lambda\vdash 2l+\delta$ (with $\delta\in\{0,1\}$) with odd, distinct parts, 
there exists $m\in (\ZZ\pm 1/4)_+$, $d\in\ZZ_{\geq 0}$ and 
a spectral transfer map (a so-called \emph{extra-special} STM) 
\begin{equation}
\xi: \CCC_d(1/4,m)[q^2] \leadsto \CCC_l(0,\delta)[q]
\end{equation}
such that the (infinitesimally) real residual point $\respt_{\lambda}:=\respt_{1,\delta;\lambda}\in T^l_v$ is in the image 
of $\xi_T$ (where $T^l_v$ denotes the real vector subgroup of $T^l$). 
Therefore \cite[Proposition 5.2]{Opds} we can find a real residual point $r\in T^d_v$ of $\CCC_d(1/4,m)[q^2]$ such that $\xi_T(r)=\respt_{1,\delta;\lambda}$. The real residual points 
of Hecke algebras with generic parameters such as $\CCC_d(1/4,m)[q^2]$ are parameterized by partitions $\rho\vdash d$, where the coordinates of $r=\respt_{2,m;\rho}$ are 
simply given by $q^{2c(x)}$ with $c(x)$ running over contents of the $m$-tableau of shape 
$\rho$ (cf.~\cite{HO, OpdSol}).
In other words, for every odd, distinct partition $\lambda$ of $2l+\delta$ we can 
find a pair $(m,\rho)$ as above, with $\rho\vdash d$, and an extra-special STM 
such that 
\begin{equation}
\xi_T(\respt_{2,m;\rho})=\respt_{1,\delta;\lambda}. 
\end{equation} 
In particular, by \eqref{eq:resSTM}, we have the relation (for some nonzero rational constant $C$):
\begin{equation}\label{eq:es-STM}
\mu_{0,\delta}^{l;\{\respt_{1,\delta;\lambda}\}}
(\respt_{1,\delta;\lambda})
=C\mu_{1/4,m}^{d;\{\respt_{2,m;\rho}\}}(\respt_{2,m;\rho})
\end{equation}
The point of all this is that the left-hand side of \eqref{eq:es-STM} is, up to 
a nonzero rational constant, the adjoint gamma factor of an unramified Langlands 
parameter $\varphi$ (with $s_\varphi=1$), while the right-hand side is the residue 
at a residual point of the $\mu$-function of a Hecke algebra with generic parameters. 
The latter expression exhibits much less complicated cancellations, and it is on this 
side where we will analyze the combinatorial implications of the property
observed in Corollary \ref{cor:sc}.

The bijection $\lambda\leftrightarrow (m,\rho)$ between the set of odd distinct partitions, and 
the set of pairs $(m,\rho)$ with $m\in (\ZZ\pm 1/4)_+$ and $\rho$ a partition is made explicit 
in Section \ref{e-s}.

\subsection{The main theorem}\label{sec:mainthm}

We are now ready to state our main theorem. It generalizes the version in the Introduction in two aspects: the group is not necessarily simply connected, nor $K$-quasi-split. To be precise, let $\mbf{G}$ be isogenous to a classical group over $K$ which splits over an unramified extension.  
Let $\omega=[u]\in H^1(F, G_{ad})$, and denote $\mbf{G}^u$ the corresponding inner form of $\mbf{G}$. Given a maximal parahoric subgroup $\PH$ of $G$ such that $F_u(\PH)=\PH$, let $\sigma$ be a cuspidal unipotent representation of $\PH^{\str}$. We have a compactly induced representation $\cInd_{\PH^{\str}}^{G^{\str}} \sigma$. Let $\pi$ be an irreducible summand of this induced representation, with formal degree $\fdeg(\pi, q)$ given by \eqref{eq:classicalfdeg}.  
Our main Theorem is:
\begin{thm}\label{thm:A}
Let $\pi$ be a supercuspidal unipotent representation of the group $G^{\str}$ which occurs as an irreducible summand of $\cInd_{\PH^{\str}}^{G^{\str}} \sigma$.
There \emph{exists a unique} $(\Omega^\theta)^*$-orbit 
$(\Omega^\theta)^*[\varphi]$ of equivalence classes of discrete unramified local Langlands parameters 
$\varphi\in \Phi_{ur}^2(G^{\str})$ such that, for some nonzero rational constant $C$ (depending only on the centralizer of the image of $\phi$ and not on $K$),  
one has  
\begin{equation}\label{eq:fdegisgammafactor}
\fdeg(\pi, q)=C\gamma(\varphi, q)
\end{equation}
as rational functions in $q$ with $\QQ$-coefficients. 
\end{thm} 
We remark that if the discrete unramified Langlands parameter 
$\varphi$ as in Theorem \ref{thm:A} corresponds to the pair of unipotent partitions $(\lambda_-,\lambda_+)$ 
as in \S \ref{sec:DULLPCG}, then \eqref{eq:gammamu} implies that \eqref{eq:fdegisgammafactor} 
is equivalent to the following equality
\begin{equation}\label{eq:fdegmu}
\fdeg(\pi, q)=C'\mu_{\delta_-,\delta_+}^{l;\{(-\respt_{\lambda_-},\respt_{\lambda_+})\}}(-\respt_{\lambda_-},\respt_{\lambda_+})
\end{equation}
where $C'$ is a nonzero rational constant.

The point of the present paper is 
that \eqref{eq:fdegisgammafactor} completely characterizes $(\Omega^\theta)^*[\varphi]$. 
One immediate consequence of this characterization is that one can associate a discrete unramified LLP to a 
cuspidal unipotent character using this characterization for groups of arbitrary isogeny type. We formulate 
this for arbitrary almost simple groups (which is allowed since the necessary results for exceptional 
groups are known \cite{Re, Fe2}): 
\begin{cor}
Let $\mbf{G}$ be a connected absolutely almost simple group over $K$ which splits over $K_{ur}$, 
and let $\pi$ be a cuspidal unipotent representation of $G^{\str}$. The set of solutions $[\varphi]$ 
of \eqref{eq:fdegisgammafactor} is a unique $(\Omega^\theta)^*$-orbit of equivalence classes of 
unramified local Langlands parameters which is canonically associated with $\pi$.  
\end{cor}

\subsection{The list of solutions}

For the cuspidal unipotent representations as described in Section \ref{sub:fdsup}, we list below the classes of parameters $[\varphi]$ which are solutions of \eqref{eq:fdegisgammafactor} for the various classical groups (other than the easy type $\AAA$ case) explicitly. By Corollary \ref{cor:iso}, for each classical type (other than the easy case of type $\AAA$) it is enough to consider one 
specific group in the isogeny class of this type. Therefore, it is reduced to consider the associated affine Hecke algebra of the form $\CCC_l(\dem, \dep)[q^{\fb}]$ as described in Section \ref{sec:DULLPCG}. We remark that for the adjoint form of a classical 
group this unique  (up to the action of $(\Omega^\theta)^*$) parameter $\varphi$ 
is exactly the parameter which Lusztig \cite{Lusztig-unirep} associated 
somehow with the supercuspidal unipotent representation $\pi$. These are the equivalence classes of unramified 
Langlands parameters which support a cuspidal local system. We also give the corresponding $\omega \in H^1(F, G_{ad})\simeq \Omega_{ad}/(1-\theta)\Omega_{ad}$, 
indicating the class of the inner form $\mbf{G}$ of which $\pi$ is a representation, in terms of the parameter $\varphi$. 
The partitions are expressed as an increasing sequence of nonzero numbers.\\

(i) Let $\mbf{G}$ be of type $\widetilde{{}^2\AAA_n}$. 
The group $\Omega_{ad}/(1-\theta)\Omega_{ad}$ is trivial if $n=2l$ is even, and of order two if $n=2l-1$ is odd, 
and let $\omega\in \Omega_{ad}/(1-\theta)\Omega_{ad}$.
Let $\pi$ be a cuspidal 
unipotent representation of $G^{\str}$ 
associated with a subset of the form $\mathsf{J}_{a,b}\subset \tisfI$ as in Section \ref{sub:fdsup}.
Put $s=(a/2)(a+1)-1, t=(b/2)(b+1)-1$, then $s+t+2=n+1$ and $\omega$ is nontrivial iff 
$s$ and $t$ are both even (or equivalently, if $a,b$ are both equal to $1$ or $2$ modulo $4$).
Fix parameters $(m_-,m_+)\in\mathcal{V}^\text{I}$ such that  
$\{|m_--m_+|,m_-+m_+\}=\{a+1/2,b+1/2\}$ as in \eqref{eq:sets}, where $m_-\in\ZZ+1/2$. 
Let $\dem=1/2$ and let $\dep \in \{0, 1\}$ be the parity of $m_+$.
Then $\varphi$ corresponds to the pair of partitions $(\lambda_-,\lambda_+)$ where $\lambda_\pm$ are given by
$$
\lambda_-=[2, 4, \ldots, 2(\emm-3/2), 2(\emm-1/2)], \quad \lambda_+=[1, 3, \ldots, 2\emp-3, 2\emp-1].
$$
We have $|\lambda_-|+|\lambda_+|=n+1$. 

(ii) Let $\mbf{G}$ be of type $\widetilde{\BBB_l},\,(l \geq 2)$. 
The group $\Omega_{ad}/(1-\theta)\Omega_{ad}$ has order two, and let $\omega\in 
\Omega_{ad}/(1-\theta)\Omega_{ad}$. 
Let $\pi$ be a cuspidal 
unipotent representation of $G^{\str}$ associated with a subset of 
the form $\mathsf{J}_{a,b}\subset \tisfI$ as in Section \ref{sub:fdsup}, 
so that $l=s+t=a^2+b(b+1)$. Then $\omega$ is nontrivial iff $a$ is odd. 

Fix $(m_-,m_+)\in\mathcal{V}^\textup{II}$ such that $\{|m_--m_+|,m_-+m_+\}=\{2a,1+2b\}$ 
as in \eqref{eq:sets}. We put $\delta_\pm=1/2$. 
The pair of partitions $(\lambda_-,\lambda_+)$ is given by
$$
\lambda_\pm = [2, 4, \ldots, 2(m_{\pm} - 3/2), 2(m_{\pm} -1/2)].
$$
We have $|\lambda_-|+|\lambda_+|=2l$. 

(iii) Let $\mbf{G}$ be of type $\widetilde{\CCC_l},\, (l \geq 2)$. 
We consider here only the case $\omega=1$; for the non-split inner form 
(which is an extra-special case), see (v). 
Fix $(m_-,m_+)\in\mathcal{V}^\textup{III}$ such that $\{|m_--m_+|,m_-+m_+\}=\{1+2a,1+2b\}$
as in \eqref{eq:sets}, with $m_-$ even. We fix $\dem=0, \dep=1$. 
The partitions are given by
$$
\lambda_\pm = [1, 3, \ldots, 2m_\pm -3, 2m_\pm -1].
$$
We have $|\lambda_-|+|\lambda_+|=2l+1$.  

(iv) Let $\mbf{G}$ be of type $\widetilde{\DDD_l} \, (l \geq 4)$ or 
of type $\widetilde{{}^2\DDD_l} \, (l \geq 4)$. 
The group $\Omega_{ad}$ of 
the split group of type $\widetilde{\DDD_l}$ is isomorphic to $C_4$
(odd $l$) or $C_2\times C_2$ (even $l$), and has a nontrivial automorphism 
$\theta$ of order $2$ induced by the finite diagram automorphism associated 
with the quasi-split structure of type $\widetilde{{}^2\DDD_l}$. We write explicitly $\Omega_{ad}=\{1,\rho,\eta,\eta\rho\}$ such that $\Omega_{ad}^\theta=\{1,\eta\}$
(so $\rho^2=1$ if $l$ is even, and $\rho^2=\eta$ else). Then 
$(1-\theta)\Omega_{ad}=\Omega_{ad}^\theta$ and we denote
$\Omega_{ad}/(1-\theta)\Omega_{ad}=\{\overline{1},\overline{\rho}\}\simeq C_2$. 
In this item, we only consider those inner forms $G^{\str}$ for which 
the image in $\Omega_{ad}/(1-\theta)\Omega_{ad}$ is trivial. For the remaining 
(extra-special) cases, see (vi).

If $\mbf{G}$ is of type $\widetilde{\DDD_l}$ and $\omega=1$,   
consider $\mathsf{J}_{a, b} = \DDD_{a^2} \sqcup \DDD_{b^2}$ with $a$ and $b$ both even. 
Fix $(m_-,m_+)\in\mathcal{V}^\text{IV}$ such that
$\{|m_--m_+|,m_-+m_+\}=\{2a,2b\}$ as in \eqref{eq:sets}. 
We have $m_\pm \equiv 0 \pmod{4}$. 
If  $\omega=\eta$ then $\mathsf{J}_{a, b} = {}^2\DDD_{a^2} \sqcup {}^2\DDD_{b^2}$ 
with $a$ and $b$ both odd. We now have $m_\pm \equiv 2 \pmod{4}$. 
For the quasi-split groups of type $\widetilde{{}^2\DDD_l}\, (l \geq 4)$, we have $\Omega_{ad}/(1-\theta)\Omega_{ad} = C_2$ 
and we consider only the case $\omega=1$. In this case 
$\mathsf{J}_{a, b} = \DDD_{a^2} \sqcup {}^2\DDD_{b^2}$ with $a$ even and $b$ odd. We now 
take $m_-\equiv 2 \pmod{4}$ and $m_+\equiv 0 \pmod{4}$. 

In all these cases we have $\delta_\pm=0$, and 
the pair of partitions $(\lambda_-,\lambda_+)$ is given by
$$
\lambda_\pm = [1, 3, \ldots, 2m_\pm -3, 2m_\pm -1].
$$
We have $|\lambda_-|+|\lambda_+|=2l$.

(v) Let  $\mbf{G}$ be of type $\widetilde{\CCC_l} \,(l \geq 2)$ and let $\omega$ correspond to the non-trivial element 
$\rho\in \Omega_{ad}/(1-\theta)\Omega_{ad}$. This is an extra-special case. 
Fix $(m_-,m_+)\in\mathcal{V}^\text{V}$ such that 
$\{|m_--m_+|,m_-+m_+\}=\{a+1/2,1+2b\}$ as in \eqref{eq:sets}. 
Define $\kappa_\pm\in\ZZ_{\geq 0}$ and $\delta_\pm\in\{0,1\}$ by writing 
$m_{\pm} = \kappa_{\pm} + (2\epsilon_\pm -1)/4$, with $\kappa_\pm \equiv \delta_\pm \pmod{2}$. 
By possibly interchanging $m_-$ and $m_+$ we can choose $\kappa_-$ even and $\kappa_+$ odd, 
so that $(\dem, \dep)=(0, 1)$. 
The pair of partitions $(\lambda_-,\lambda_+)$ is given by: 
$$
\lambda_\pm = [1+2\epsilon_\pm, 5+2\epsilon_\pm, \ldots, 4(\kappa_\pm-1)+1+2\epsilon_\pm].
$$
(vi) The inner twists of $\widetilde{\DDD_l} \,(l \geq 4)$ corresponding to $\omega=\rho$ or $\rho\eta$ 
or the inner twist of $\widetilde{{}^2\DDD_l} \,(l \geq 4)$ corresponding to $\omega=\overline{\rho}$.
These cases are extra-special. 
Fix $(m_-,m_+)\in\mathcal{V}^\text{VI}$ such that 
$\{|m_--m_+|,m_-+m_+\}=\{a+1/2,2b\}$ as in \eqref{eq:sets}, and define $\kappa_\pm$ and 
$\delta_\pm\in\{0,1\}$ as in (v).
We now have $\delta_-=\delta_+$, and again the pair $(\lambda_-,\lambda_+)$ is given by:
$$
\lambda_\pm = [1+2\epsilon_\pm, 5+2\epsilon_\pm, \ldots, 4(\kappa_\pm-1)+1+2\epsilon_\pm].
$$

\section{Description of the extra-special algorithm}\label{e-s}

Let $\mathsf{P}_{\od}$ be the collection of all partitions (including the zero partition) with odd, distinct parts. 
Let  
$$
\mathsf{R} = \{(m, \rho) \mid m \in \ZZ \pm 1/4,\,m >0\, \text{ and }\rho \text{ a (possibly zero) partition}\}.
$$
We will define two operations $\E: \mathsf{P}_{\od} \to \mathsf{R}$ and $\D: \mathsf{R} \to \mathsf{P}_{\od}$ and prove they are inverse to each other. We refer to $\E$ as the \emph{extra-special algorithm}. 

We remark that the map $\D$ is similar, and indeed equivalent to the algorithm discussed in \cite[\S 4.4]{CK}. It was also shown in \cite{CK} that $\D$ is 
an injective map. \footnote{We thank Dan Ciubotaru for pointing out the reference \cite{CK} to us.}\\

The algorithm $\E: \lambda \mapsto (m, \rho)$ produces a number $m\in(\ZZ\pm 1/4)_+$, and an $m$-tableau, whose shape we call $\rho$.
The steps to produce $m$ and $T_m(\rho)$ from $\lambda\in \mathsf{P}_{\od}$ are as follows:  
\begin{itemize}
\item[(1)]  Write $\lambda$ as a non-negative integral sequence in decreasing order. 
Define $\jmath = (\lambda-{\bf 1})/2$, where $(\lambda-\mathbf{1})/2$ means substracting 1 from all nonzero parts of 
$\lambda$, and then dividing each part by 2.\\ 

We stress that we do not regard $\jmath$ as a partition, but as a tuple of non-negative integers, whose length is equal to the number of nonzero parts of $\lambda$.  

\item[(2)] Let $\kappa \geq 0$ be the excess number of parts of the dominant parity type (even or odd) of $\jmath$. Put $\epsilon =1$ if the dominant parity type is odd or if $\kappa=0$ (in which case we shall call the dominant parity type odd as well), 
otherwise put $\epsilon =0$. 
Let $m = \kappa + (2\epsilon -1)/4$. This gives us the required number $m\in(\ZZ\pm 1/4)_+$.

\item[(3)] Let $\jmath'=(\gamma_1, \ldots, \gamma_{\kappa})$ be the sub-sequence in $\jmath$ of the $\kappa$ smallest parts of dominant parity type.

\item[(4)] Removing $\jmath'$ from $\jmath$ and denote the remaining sub-sequence of $\jmath$ by $\jmath''$. Thus $\jmath''$ has an equal number parts of both parities. Arrange $\jmath''$ in $t$ pairs: 
$$
\jmath''= \big( (\alpha_1, \beta_1), \ldots, (\alpha_t, \beta_t)\big)
$$
with $\alpha_1 > \cdots > \alpha_t$ and $\beta_1 > \cdots > \beta_t$, where for all $i$, $\alpha_i$ is of dominant parity type and $\beta_i$ is of the other parity type.
  
\item[(5)]   
For every pair $(\alpha_i, \beta_i)$ we denote by $T_m(H(\alpha_i, \beta_i))$ the hook-shaped $m$-tableau whose 
\emph{hand} (the box at the end of its arm) is filled with $(\alpha_i - 1/2)/2$, and whose foot has filling $|\beta_i-1/2|/2$.\\

Note that we need to take the absolute value in the latter expression since it might happen that the smallest part of $\jmath$ is $0$. 
If $\epsilon=1$ then the part $0$ of $\jmath$ 
will appear as $\beta_t=0$ in the smallest pair $(\alpha_t,\beta_t)$ of $\jmath''$. Also, if $\epsilon=0$ then 
$\kappa>0$, and then the part $0$ of $\jmath$ will appear as the smallest part $\gamma_\kappa$ of $\jmath'$. In particular, we always have $(\alpha_i - 1/2)/2>0$ for all $i$.

Let $T_m(H)$ be the $m$-tableau obtained by nesting the hook shaped tableaux  $T_m(H(\alpha_i, \beta_i))$ 
in decreasing order.
Observe that all hooks $T_m(H(\alpha_i, \beta_i))$ contain a box with filling $m$ (namely the box at the corner) and a box
with filling $1/4$ (these two boxes coincide when $m=1/4$). We call such hooks \emph{$m$-hooks}.
Hence the leg of an $m$-hook has length at least $\kappa$, since its corner box has filling $m$ 
and the box with filling $1/4$ is precisely $\kappa$ boxes below that. 

\item[(6)] We add horizontal strips $S_i$ (which may be empty) to $T_m(H)$ (for $i=1,\dots , \kappa$).
If $\gamma_i < 2(m-i+1)+1/2$, then the strip $S_i$ is empty.
Otherwise $S_i$ is the horizontal $(m-i+1)$-tableau whose rightmost extremity has filling $|\gamma_i-1/2|/2$
(again, we need the absolute value because it might happen that $\gamma_\kappa=0$, namely if 
$\jmath$ contains $0$ as a part, and if in addition $\epsilon=0$).
These horizontal strips can be added to $T_m(H)$ in a unique way such that the union is an $m$-tableau
(so $S_1$ is placed at the ``armpit" of $T_m(H)$, and $S_{i+1}$ is placed just below $S_i$ for $i=1, \dots, \kappa-1$).\\

\noindent Observe that for all $i$ the parity type of $\gamma_i$ is $\epsilon$. The smallest possible value of $\gamma_i$ equals 
$\gamma_i=2(m-i+1)-3/2$, corresponding to $S_i$ being empty. In particular, the smallest possible value of $\jmath_{\kappa}$ is 
$\jmath_{\kappa} = \epsilon$. If all strips $S_i$ are empty, we have: 
\begin{equation*}
\jmath'=(\epsilon+2(\kappa-1), \epsilon+ 2(\kappa-2), \ldots, \epsilon)
\end{equation*}
\noindent Denote the union of the strips $S_i$ by $T_m(S)$. Then $T_m(S)$ is either an $m$-tableau or empty.
 
\item[(7)] Finally, $\rho$ is the partition whose Young diagram is the shape formed by the union of the $m$-hooks $H$ and strips 
$S_i$ in the way indicated above.
\end{itemize}

As mentioned above, 
we call the hook-shaped $m$-tableaux $T_m(H(\alpha_i, \beta_i))$ the \emph{$m$-hooks} of $T_m(\rho)$. 
Observe that the $m$-hooks of $T_m(\rho)$ are precisely the hooks in $T_m(\rho)$ which are $m$-tableaux, and contain $1/4$. 
Equivalently, a hook of $T_m(\rho)$ which is an $m$-tableau is an $m$-hook if and only if its leg length 
is at least $\kappa$.



\begin{ex} Let $\lambda = [21, 19, 13, 9, 5, 3]$ be an odd distinct partition. We have already ordered the parts of $\lambda$ in decreasing order. Then we have $\jmath= (10, 9, 6, 4, 2, 1)$. We now have 4 even numbers and 2 odd numbers. Thus the dominant parity type is even (hence $\epsilon =0$) and the excess number $\kappa=4-2=2$. We get $m = \kappa+(2\epsilon - 1)/4 = 7/4$.\\

Now $\jmath' = (4, 2)$. Removing $\jmath'$ from $\jmath$ we obtain $\jmath'' = \big ((10, 9), (6, 1) \big)$. Therefore we will have 2 hook-shaped $m$-tableaux and 2 horizontal strips. The Young tableau is as follows:

\begin{center}
\ytableausetup{mathmode, boxsize=2.2em}
\begin{ytableau}
 7/4 & 11/4 & 15/4 & 19/4 \\
 3/4 & 7/4 & 11/4 \\
 1/4 & 3/4 & {7/4}\\ 
 5/4 & 1/4 & {3/4}\\
 9/4 \\
 13/4\\
 17/4
\end{ytableau}
\end{center}

The two horizontal strips are the two right-most boxes in the third and fourth rows.
\end{ex}

We next describe the operation $\D$, namely how to recover the odd distinct partition $\lambda \in \mathsf{P}_{\od}$ from $(m, \rho) \in \mathsf{R}$.\\

Recall that $m \in (\ZZ \pm 1/4)_+$. Let $\kappa$ be the closest integer to $m$ and write $m = \kappa + (2\epsilon -1)/4$ with $\epsilon =0$ or $1$.
This uniquely determines a parity type $\epsilon$ and a nonnegative integer  $\kappa$. 
Define $\delta \in \{0, 1\}$ by $\kappa \equiv \delta \pmod{2}$. \\ 

The $m$-tableau $T_m(\rho)$ can be written as a disjoint union of nested $T_m(\rho)$-hooks which are themselves 
$m$-tableaux. If one of these hook shapes is an $m$-hook than all its predecessors are $m$-hooks too (since the 
condition for such hooks to qualify as $m$-hooks is that their leg lengths are at least $\kappa$). Hence 
$T_m(\rho)$ has a unique decomposition as the union $T_m(H) \cup T_m(S)$ of two $m$-tableaux (both possibly empty) such that  
$H$ is the largest $m$-tableau contained in $T_m(\rho)$ which is a union of $m$-hooks, 
and $S$ is the complement of $H$ in $T_m(\rho)$. By the above we see that $S$ is itself an 
$m$-tableau (or empty) without $m$-hooks, and that $S$ has at most $\kappa$ parts.  \\

We number the shapes of the the nested $m$-hooks in $T_m(\rho)$ in decreasing order as $H_1, \cdots, H_t$. 
For every $i$, $T_m(H_i)$ defines unique pair of nonnegative 
integers $(\alpha_i, \beta_i)$ such that $T_m(H_i)=T_m(H(\alpha_i, \beta_i))$. 
Indeed, if the hand of $T_m(H_i)$ has filling $A_i\in(\ZZ \pm 1/4)_+$ and its foot  
has filling  $B_i\in(\ZZ \pm 1/4)_+$, then $\alpha_i$ is the unique integer of parity type $\epsilon$ nearest to $2A_i\in(\ZZ+1/2)_+$ 
(which is easily seen to be $2A_i+1/2$), and 
$\beta_i$ is the unique integer of parity type $1-\epsilon$ nearest to $2B_i\in(\ZZ+1/2)_+$.
In particular, every such pair  $(\alpha_i,\beta_i)$ consists of nonnegative integers with opposite parity type.\\

Recall that $S$ itself is an $m$-tableaux (or empty) with at most $\kappa$ parts. 
Let $S_1, \cdots, S_{\kappa}$ denote the \emph{list} of rows of $S$ (where some of the rows, or even all of them, may be empty). 
Let the rightmost box of $S_i$ be filled with $C_i\in(\ZZ\pm1/4)_+$. 
If $S_i$ is empty, we define $C_i=|m-i|\in(\ZZ\pm1/4)_+$ (this is the filling of the rightmost box of the $(t+i)$-th row of 
$T_m(\rho)$, provided that $t\not=0$ (otherwise $S=T_m(\rho)$, in which case this row is empty by assumption)).
Define $\gamma_i$, for $i=1,\dots,\kappa$, as the unique nonnegative integer of parity type $\epsilon$ 
nearest to $2C_i\in(\ZZ\pm1/2)_+$.\\
 
This determines a set of pairs of nonnegative integers (of opposite parity) $(\alpha_i,\beta_i)$, and a 
set of nonnegative integers $\gamma_j$ uniquely. Observe that these integers are mutually distinct.\\


Now we form the \emph{descending} list $\jmath$ of the numbers $(\alpha_i$, $\beta_i)$ (for $i=1,\dots,t$) and the $\gamma_j$ (for $j=1,\dots,\kappa$). Observe that the length of $\jmath$ is at least $\kappa$ 
(namely, we have at least the 
numbers $\gamma_j$ for $i=1,\dots,\kappa$ in our list). 
Finally we define $\lambda:=2\jmath+{\bf 1}$. Observe that $\lambda$ has distinct, odd parts, 
and that $\delta$ (the parity of $\kappa$) is also the parity of the number of parts of $\lambda$.

We can also form the \emph{descending} list $\mathsf{e}$ of the numbers $(\alpha_i-1/2)/2$ the $(\beta_i -1/2)/2$ 
(for $i=1,\dots,t$) and the $(\gamma_j-1/2)/2$ (for $j=1,\dots,\kappa$). Observe that the list $\mathsf{e}$ may contain 
at most one negative number as an entry, namely $-1/4$. Then $\jmath=2\mathsf{e}+\mathsf{1/2}$, and  
the list $|\mathsf{e}|$ of absolute values of $\mathsf{e}$ is the 
list consisting of the fillings of arms (the $A_i$) and feet (the $B_i$) of the $m$-hooks $H_i$ of $T_m(\rho)$ 
(for $i=1,\dots, t$), combined with the list of the $C_j$ (for $j=1,\dots,\kappa$).


\begin{thm}
The operations $\E: \mathsf{P}_{\od} \to \mathsf{R}$ and $\D: \mathsf{R} \to \mathsf{P}_{\od}$ are inverse bijections. 
\end{thm}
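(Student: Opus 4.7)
The plan is to verify that both $\E$ and $\D$ are well-defined and then trace the combinatorial data through each construction to see that they are mutual inverses.

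First I would check that $\E$ produces a genuine $m$-tableau. Since the $\alpha_i$ (respectively, the $\beta_i$) are strictly decreasing nonnegative integers of a common parity, consecutive members differ by at least two, and a direct calculation with the filling formula $|m-i+j|$ shows that the arms and legs of the nested hooks $T_m(H(\alpha_i,\beta_i))$ fit properly inside one another to form a Young diagram. For the strips, the test $\gamma_i < 2(m-i+1)+1/2$ decides emptiness, and the strict decrease of the $\gamma_i$ (of common parity $\varepsilon$) translates via the filling formula into a sequence of horizontal rows of non-increasing lengths attached below the hook region, so the union is a valid $m$-tableau $T_m(\rho)$ with $(m,\rho)\in\mathsf{R}$. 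Next I would check that $\D$ is well-defined by observing that the decomposition $T_m(\rho)=T_m(H)\cup T_m(S)$ is intrinsic to $(m,\rho)$: an $m$-hook is characterized among hook-shaped $m$-subtableaux of $T_m(\rho)$ by the fact that its leg length is at least $\kappa$ (equivalently, it contains a box of filling $1/4$), and iteratively removing the outermost such hook yields the nested $H_1,\ldots,H_t$ uniquely, leaving a residue $S$ with at most $\kappa$ rows and no $m$-hooks. The hand and foot fillings recover $(\alpha_i,\beta_i)$, while the strip rows recover the $\gamma_j$'s (empty rows of $S$ corresponding to the minimal values $\gamma_i=\varepsilon+2(\kappa-i)$), yielding a descending list $\jmath$ of distinct nonnegative integers and hence $\lambda=2\jmath+\mathbf{1}\in\mathsf{P}_{\od}$.

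To show $\D\circ\E=\operatorname{id}_{\mathsf{P}_{\od}}$, I would argue that the hooks $T_m(H(\alpha_i,\beta_i))$ built in $\E$ from $\lambda$ are precisely the $m$-hooks that $\D$ identifies in the outside-in decomposition of the resulting $T_m(\rho)$: by construction their hands and feet carry the correct fillings, and their leg lengths are at least $\kappa$, so $\D$ peels them off in the same order; the remaining shape is the union of the strips $S_i$, whose rightmost fillings encode the $\gamma_j$'s. Reassembling $\jmath$ and undoing $\jmath=(\lambda-\mathbf{1})/2$ recovers $\lambda$. Dually, for $\E\circ\D=\operatorname{id}_{\mathsf{R}}$, the key point is that the parity statistics of $\jmath=(\lambda-\mathbf{1})/2$ for $\lambda=\D(m,\rho)$ reproduce the $(\varepsilon,\kappa)$ attached to $m$ via $m=\kappa+(2\varepsilon-1)/4$: the dominant parity type is $\varepsilon$ because the $\alpha_i$'s and $\gamma_j$'s all carry parity $\varepsilon$ while the $\beta_i$'s carry the opposite parity, and the excess is exactly the number $\kappa$ of $\gamma_j$'s. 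The pairing step of $\E$ then rebuilds the same pairs $(\alpha_i,\beta_i)$, and the strip-placement step rebuilds strips with the prescribed rightmost fillings $|\gamma_i-1/2|/2$, so $T_m(\rho)$ is faithfully reproduced.

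The main obstacle will be the book-keeping in the boundary case when $0\in\jmath$, since then the corresponding extremal filling is $1/4$, which is also the filling at the end of the leg of every $m$-hook. One then needs to check that the decomposition in $\D$ correctly assigns this extremal filling either to an $m$-hook (the case $\varepsilon=1$, so $\beta_t=0$) or to the last strip row (the case $\varepsilon=0$, so $\gamma_\kappa=0$). This dichotomy is forced by the intrinsic criterion ``leg length at least $\kappa$'' used to identify $m$-hooks in $\D$, and it matches the $\varepsilon$-convention used in $\E$ to decide whether the $0$ part of $\jmath$ appears in $\jmath''$ (as a $\beta_t$) or in $\jmath'$ (as a $\gamma_\kappa$); verifying this match in every subcase is what makes the argument a careful rather than a trivial verification.
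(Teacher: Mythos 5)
The paper deliberately omits a proof of this theorem: it defers to \cite{Fe2}, cites \cite{CK} for the observation that $\D$ is injective, and contents itself with illustrative examples. So there is no in-paper argument to compare against; you are filling a gap the authors intentionally left.

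Your direct two-sided-inverse argument is correct in outline, and it isolates the right crux. For $\D\circ\E=\operatorname{id}$, what makes it work is that $\D$'s $m$-hook criterion (leg length at least $\kappa$, or equivalently containing a box with filling $1/4$) selects exactly the hooks $T_m(H(\alpha_i,\beta_i))$ built by $\E$ and rejects the nested hooks of $T_m(\rho)$ coming from the strip region. Concretely, a short computation with $|m-i+j|$ gives that the leg length of $H(\alpha_i,\beta_i)$ is $\kappa+\lfloor\beta_i/2\rfloor\geq\kappa$, while the $(t+j)$-th nested hook of $T_m(\rho)$ (the one with corner at $(t+j,t+j)$, whose arm is $S_j$) has leg length at most $\kappa-j<\kappa$, so it is not an $m$-hook; this second inequality is the one your proposal leaves implicit and is worth writing down. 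Similarly, your verification of $\E\circ\D=\operatorname{id}$ rests on the parity count (the $\alpha_i$'s and $\gamma_j$'s share parity $\varepsilon$, the $\beta_i$'s carry $1-\varepsilon$, so the excess is $\kappa$ and the dominant parity is $\varepsilon$), including the degenerate convention $\kappa=0\Rightarrow\varepsilon=1$, and this is exactly right. Two routine verifications that should be made explicit in a written-out version: (i) the hooks nest because the arm and leg lengths of consecutive hooks differ by at least one, which follows since the $\alpha_i$ respectively $\beta_i$ are strictly decreasing within a fixed parity class and hence drop by at least $2$; (ii) the boundary bookkeeping when $0\in\jmath$, where for $\varepsilon=0$ the strip $S_\kappa$ is empty and $\D$ recovers $\gamma_\kappa=0$ via the convention $C_\kappa=|m-\kappa|=1/4$. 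You correctly flag (ii) as the delicate point; (i) is the other place where the ``direct calculation with $|m-i+j|$'' you invoke should actually be carried out.
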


We will not give the formal proof (which is straightforward, see \cite{Fe1}). 
Instead, we will give some illustrative examples.

\begin{ex} Examples of the operator $\D$:
\begin{enumerate}
\item Let $m=1/4$ and $\rho$ be zero. Then $\lambda$ is zero as well. On the other 
hand, if $m=3/4$ and $\rho$ is zero, then we have no $m$-hooks, but since $\kappa=1$ and $\epsilon=0$, 
we have one empty strip $S_1$, which yields $C_1=1/4$, and $\gamma_1=0$. Hence 
$\mathsf{e}=(-1/4)$, and $\lambda=[1]$.

\item Let $m=15/4$ and $\rho$ be the zero partition. Then $m=4-1/4$ gives $\kappa=4$ and $\epsilon  = 0$. 
We have no $m$-hooks, and $\kappa=4$ strips which are all empty. 
Hence $C_1=m-1=11/4$, $C_2=m-2=7/4$, $C_3=m-3=3/4$ and $C_4=|m-4|=1/4$, and thus 
$\gamma_1=6$, $\gamma_2=4$, $\gamma_1=2$, $\gamma_1=0$.
So $\mathsf{e}=(11/4, 7/4, 3/4, -1/4)$. We obtain the odd distinct partition $\lambda = [13, 9, 5, 1]$
(let us agree that we may also denote a partition in increasing order, using square brackets as delimiters).

\item For a singleton $\boxed{m}=\boxed{1/4}$ we have $\kappa=0, \epsilon =1$. 
Hence we have one hook, and no strips (even no empty ones!). 
We find that $\mathsf{e}=(1/4, -1/4)$; thus we get $\lambda = [3, 1]$.

\item Consider the following tableau:
	\begin{center}
	\ytableausetup{mathmode, boxsize=2.2em}
	\begin{ytableau}
    	5/4\\
	1/4
	\end{ytableau}
	\end{center}

Here $m=5/4=1+1/4$. Therefore $\kappa=1$ and $\epsilon = 1$. We have one $m$-hook, and one empty strip. 
Thus  $(A_1,B_1)=(5/4,1/4)$, and  $((\alpha_1-1/2)/2,(\beta_1-1/2)/2)=(5/4, -1/4)$. 
In addition the empty strip $S_1$ yields $C_1=|m-1|=1/4$, and thus $\gamma_1=1$. 
Thus we form the descending list $\mathsf{e} = (5/4, 1/4, -1/4)$ and recover the odd distinct partition $\lambda = [7, 3, 1]$.

\end{enumerate}
\end{ex}                             

\section{Discrete unramified Langlands parameters of even degree for classical groups}\label{sec:noodd}
In this section we will study the case of classical groups of positive $K$-rank. 
As observed in Corollary \ref{cor:sc}, in this case the formal degree of a cuspidal unipotent representation $\pi$ is, 
with our normalization of Haar measures, the product of a rational number and the value of a rational function 
in $q$ at $q=\mbf{q}$, where the reciprocal of this rational function is a product of \emph{even} cyclotomic polynomials.  
Thus a discrete unramified local Langlangds parameter $\varphi_\pi$ such that (\ref{eq:fdeg=gamma}) holds has to be  
``of even degree''. This turns out to be a very selective property. 
In the present section we will classify all discrete unramified Langlands 
parameters $\varphi$ which are of even degree. 

We first reduce the classification to real Langlands parameters of even degree in Section \ref{sub:redreal}, which 
come in the three flavours Odd Orthogonal, Symplectic and Even Orthogonal.  
The Odd Orthogonal case ($\delta=1/2$) is elementary, see Section \ref{sub:OO}.
The Even Orthogonal ($\delta=0$) and Symplectic ($\delta=1$) cases (see Section \ref{sec:countingdelta=0}) 
are more involved, requiring the technique of extra-special spectral transfer morphisms as discussed in the previous 
Section \ref{sub:muSTM}
to translate the problem to classifying real residue points of even degree for affine Hecke algebras with parameters 
of the form $m\in(\ZZ\pm 1/4)_+$ (see Section \ref{sub:oddextra}). 
The main classification results are Proposition \ref{cor:deltaishalf} and Proposition \ref{cor:delta01}.

We now first start in Subsection \ref{sub:mult} by 
fixing some notations for two natural bases of the free abelian subgroup of the multiplicative group 
of the field of rational functions in $v$ generated by the cyclotomic polynomials in $q=v^2$. 
\subsection{Multiplicity functions}\label{sub:mult}

We introduce some notations for counting the multiplicities of (odd) cyclotomic polynomial factors in the formal degrees. 
Recall the $n$-th cyclotomic polynomial  $\Phi_{n}(q) = \prod_{s | n} (1-q^s)^{\mu(n/s)}$.  Recall the following basic facts: 
\begin{itemize}
\item[(1)] The cyclotomic polynomials are distinct irreducible elements of $\ZZ[q]$. 
\item[(2)] $\Phi_n(q^2) = \Phi_{2n}(q)$ if $n$ is even, while $\Phi_n(q^2) = \Phi_{2n}(q) \Phi_n(q)$ if $n$ is odd. 
\item[(3)] A polynomial of the form $1+q^n\, (n\in\ZZ_{+})$ is a product of \emph{even} cyclotomic polynomials. 
\end{itemize}
Let $\mbf{K}$ be the fraction field of $\CC[v^{\pm}]$. Let $\mbf{M}_0$ be the subgroup of the multiplicative group 
$\mbf{K}^\times$ generated by $\QQ^\times$, $v$, and the set $\{\Phi_k(q): k \in \ZZ_+\}$. 
Then $\mbf{M}_0$ is the direct product
of $\QQ^\times$, of $v^\ZZ$, and of the free abelian group $\mbf{M}_c$ with basis $\{\Phi_k(q)\}_{k\in\ZZ_+}$.  
By M\"{o}bius inversion, 
$\{q^k-1\}_{k\in \ZZ_{+}}$ is also a basis of $\mbf{M}_c$. 
\begin{definition}\label{def:multi} 
Given $f\in \mbf{M}_0$, write $f=Cv^lf_c$ with $C\in\QQ^\times$, $l\in\ZZ$, and $f_c\in \mbf{M}_c$. 
\begin{itemize}
\item[(1)] For $n\in\ZZ_+$ we define $\cycl_f(n/2)$ as the coefficient of $\Phi_n(q)$ in the 
expansion of $f_c\in \mbf{M}_c$ with respect to the basis $\{\Phi_k(q)\}_{k\geq 1}$ of $\mbf{M}_c$ (i.e. 
the irreducible decomposition of $f_c$).
This defines a multiplicity function $\cycl_f:(\ZZ/2)_+\to\ZZ$.
If we say ``$f$ does not contain $\Phi_n$ as a factor", we mean $\cycl_f(n/2)=0$. 
\item[(2)] For $n\in\ZZ_+$ we define $\mult_f(n/2)$ as the coefficient of $q^n-1$ in the 
expansion of $f_c\in \mbf{M}_c$ with respect to the basis $\{q^k-1\}_{k\geq 1}$ of $\mbf{M}_c$. 
This defines a multiplicity function $\mult_f:(\ZZ/2)_+\to\ZZ$.
\end{itemize}
(We apologize to the reader for this convention of dividing the argument by $2$, but this turns out to be convenient in the context of this paper.) 
\end{definition}

If $k\in(\ZZ+1/2)_+$, by M\"{o}bius inversion we have 
\begin{equation}\label{eq:cycl}
\cycl_f(k)=\sum_{a\geq 1}\mult_f(ak).
\end{equation}
We suppress the subscript $f$ in $\mult_f$ and $\cycl_f$, if there is no danger of confusion.

\subsection{Reduction to real Langlands parameters}\label{sub:redreal}

A Langlands parameter $\varphi$ is called \emph{real} if $s\theta$ is $G^\vee$-conjugate to $\theta$. A residual point $r=cs \in T^d$ is called real if $s=1$. We have seen in Section \ref{sub:muSTM} that the residual points for 
a unipotent affine Hecke algebra of the form $\CCC_d(m_-,m_+)[q^\fb]$  
with $(m_-,m_+)\in\mathcal{V}$ are of the form 
$\respt_{(\lambda_-,\lambda_+)}=(-\respt_{\lambda_-},\respt_{\lambda_+})\in T^d$, 
with $\respt_{\lambda_\pm}=c_{\lambda_\pm}\in T^{d_\pm}$ a real residual point. 

From the definition and normalization of $\mu^d_{m_-, m_+}$ as discussed in Section \ref{sub:muSTM}
we see easily that 
\emph{modulo} even cyclotomic polynomial factors, rational constants and powers of $q$, we have a factorization: 
\begin{equation}\label{eq:fact}
f:=\mu_{\emm,\emp}^{d, \{\respt_{(\rho_-,\rho_+)} \} } (\respt_{(\rho_-,\rho_+)}) \equiv 
\mu_{\emp,\emm}^{d_-, \{\respt_{\rho_-}\} } (\respt_{\rho_-}) 
\mu_{\emm,\emp}^{d_+, \{\respt_{\rho_+}\} } (\respt_{\rho_+}) := f_- f_+.
\end{equation}
Also easy to see is the fact that, modulo even cyclotomic factors, powers of $q$ and nonzero rational factors, we have 
\begin{equation}
\mu_{m, m_\pm}^{d_\pm, \{\respt_{\rho_\pm}\} }(\respt_{\rho_\pm})\equiv 
\mu_{m',m_\pm}^{d_\pm, \{\respt_{\rho_\pm}\} }(\respt_{\rho_\pm})
\end{equation}
for any choice of $m'\in\ZZ/4$ such that the pair $(m',m_\pm)$ belongs to the same 
type $\mathcal{V}^X$ as $(m, m_{\pm})$ (which implies in particular that the base $\fb$ is 
the same for both parameter tuples).  
Therefore, without loss of generality, we may choose $m'$ as small as possible such that 
$(m',m_\pm)$ belongs to $\mathcal{V}^X$. 
For example, for the types $\text{V}$ and $\text{VI}$ we choose $m'=1/4$,  
so that the expressions to be analysed have the form $\mu_{1/4,m_\pm}^{d_\pm,{\{\respt_{\rho}\}}}(\respt_{\rho})$. \\

Clearly, if the multiplicity of all odd cyclotomic polynomials in both factors on the right-hand side of \eqref{eq:fact} is zero,
then the same thing is true on the left-hand side. 
Remarkably, the converse is also true in the following sense.
\begin{prop}\label{prop:odd} 
Let $\respt_{(\rho_-,\rho_+)}$ be a residual point for parameters $(\emm,\emp)$ of type \textup{I} to \textup{VI}.
If the support $\textup{Supp}(\cycl_{f_\pm})$ of $\cycl_{f_\pm}$ is not contained in $\ZZ$ for at least one of $f_-$ or $f_+$, let 
$p_\pm+1/2 \in \textup{Supp}(\cycl_{f_\pm})\cap (\ZZ+1/2)$ denote the maximal element. 
In this case we have:
$$
\cycl_{f_\pm}(p_\pm+1/2)>0,
$$ 
In particular the support of $\cycl_f$ is not contained in $\ZZ$ in this situation either, 
and if $p+1/2\in\textup{Supp}(\cycl_{f})\cap (\ZZ+1/2)$ is the maximal element, then $\cycl_f(p+1/2)>0$.
In other words, $\respt_{(\rho_-,\rho_+)}$ is of even degree if and only if 
$\respt_{\rho_-}$ and $\respt_{\rho_+}$ are both of even degree.
\end{prop}

Proposition \ref{prop:odd} yields a \emph{necessary} condition for the partitions $\lambda_\pm$  
such that $\psi_{0,T}(\respt_\varphi)=\respt_{(\lambda_-,\lambda_+)}\in T^l$ is a residual point for 
$\CCC_l(\delta_-,\delta_+)[q^\fb]$, where $\varphi$ is a discrete unipotent Langlands 
parameter satisfying \eqref{eq:fdegisgammafactor}. Indeed \eqref{eq:gammamu}, Corollary \ref{cor:iso} and 
Proposition \ref{prop:odd} imply that $\respt_{\lambda_\pm}$ need to be of even degree, 
i.e. that no odd cyclotomic factors can occur in 
$\mu_{\delta',\delta_\pm}^{l_\pm,{\{\respt_{\lambda_\pm}\}}}(\respt_{\lambda_\pm})$.
If $\delta_\pm\in\{0,1\}$ and $\fb=1$ we can further simplify this using \eqref{eq:es-STM}, to 
reformulate this as a condition on the pair $(m_\pm,\rho_\pm)$ corresponding to 
with $\lambda_\pm$ via the extra-special algorithm (we will also refer to this 
by saying that $(m_-,\rho_-)$  and $(m_+,\rho_+)$ are of even degree).
This necessary condition limits the list of eligible  
partitions $\lambda_\pm$ which may arise from a discrete unramified Langlands parameter $\varphi$ of a cusipdal unipotent representation considerably. \\

To prove Proposition \ref{prop:odd} we only need to consider the factors on the right-hand side of \eqref{eq:fact} individually. 
Thus we will omit the subscripts $\pm$ for the partitions and ranks for an individual factor. \footnote{The subscripts   
$\pm$ will be used a lot below, but with an entirely different meaning.} Using \eqref{eq:es-STM} and the results 
of Section \ref{e-s} we conclude that in order to prove Proposition \ref{prop:odd}, it suffices to show that:

\begin{prop}\label{prop:oddred} 
For all pairs $(m,\rho)$ with $\rho\vdash n$ a partition (possibly empty) and 
$m\in(\ZZ\pm 1/4)_+$ such that 
$\mu_{1/4,m}^{n, \{\respt_{\rho} \} }(\respt_{\rho})$ 
contains odd cyclotomic polynomial factors $\Phi_{2p+1}$, 
the multiplicity of $\Phi_{2p_m+1}$ with $p_m\in\ZZ_{>0}$ maximal such that 
$\Phi_{2p_m+1}$ has nonzero multiplicity in 
$\mu_{1/4,m}^{n, \{\respt_{\rho}\}} (\respt_{\rho})$ appears with \emph{positive} multiplicity.

The same statement is true for all $\delta=1/2$-unipotent classes $\lambda\vdash 2n$ (i.e. $\lambda\vdash 2n$ has even, distinct parts), and factors of the form $\Phi_{2p+1}$ in
$\mu_{1/2,1/2}^{n, \{\respt_{\lambda} \} }(\respt_{\lambda})$.
\end{prop}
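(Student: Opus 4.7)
My plan is to compute $\mu_{1/4,m}^{n,\{\respt_{(0,\rho)}\}}(\respt_{(0,\rho)})$ as an explicit product of factors $(1\pm q^a)^{\pm 1}$ with $a\in\ZZ$ and then analyse the signed multiplicities of the odd cyclotomic polynomials $\Phi_{2p+1}$ for $p\ge 1$. Starting from formula \eqref{eq:mupos} with $\fb=2$, substituting $t_k=q^{2c_k}$ where $c_k$ are the fillings of the $m$-tableau $T_m(\rho)$, and applying the regularization prescription \eqref{eq:mureg} to delete the factors that vanish identically at $\respt$, the residue becomes (up to a nonzero rational constant and a power of $v$) a finite product of $(1\pm q^a)^{\pm 1}$ with $a\in\ZZ$. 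Since $(1+q^a)$ contains no odd cyclotomic factor, $\Phi_{2p+1}$ can appear only through $(1-q^a)$ with $(2p+1)\mid a$, so the task reduces to tracking the signed multiplicities of such factors in numerator minus denominator.

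The next step is to classify the residual partitions. For $m\in\ZZ\pm 1/4$ the equation $(j_1-i_1)+(j_2-i_2)=-2m$ has no integer solution, so two distinct boxes of $T_m(\rho)$ can share a filling only if they lie on the same diagonal; together with a direct verification of the residual-point balance $|p_+|+|p_-|-|z_+|-|z_-|=l$, this forces $\rho$ to be a hook $(\ell_1,1^k)$. For such a hook the fillings are in bijection with the diagonals of $\rho$ and are all distinct, the row-fillings $m,m+1,\dots,m+\ell_1-1$ lying in one residue class modulo $1$ within $(\ZZ\pm 1/4)$ and the column-fillings $|m-i+1|$ for $i=2,\dots,k+1$ forming a complementary, disjoint set. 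In particular the maximum filling $M$ is attained at a unique box (at the end of the row or the foot of the column) and the filling $M-1$ is attained at exactly one other box.

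The single-variable numerator factor $(1-t_k)^2(1+t_k)^2=(1-q^{4c_k})^2$ at the unique $k$ with $c_k=M$ then contributes $(1-q^{4M})^2$, and $4M$ is odd since $M\in\ZZ\pm 1/4$; every other numerator contribution gives a strictly smaller odd exponent (at most $4M-2$). The only denominator factors producing $(1-q^{4M})$ are the cross-terms $(1-q^{2+2(c_i+c_j)})$ with $c_i+c_j=2M-1$, i.e.\ a single pair $(M,M-1)$ by the uniqueness of $M-1$, so the denominator multiplicity is $1$. Therefore the net multiplicity of $(1-q^{4M})$ in $\mu^{\{\respt\}}$ is $2-1=+1$, and since the maximum $q$-exponent in $\mu$ is strictly less than $8M$ the terms $\mult_f(sM)$ for $s\ge 2$ all vanish, so the identity $\cycl_f(2M)=\sum_{s\ge 1}\mult_f(sM)$ collapses to $\cycl_f(2M)=+1$, as required. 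The parallel argument for $\mu_{1/2,1/2}^{n,\{\respt_{(0,\lambda)}\}}(\respt_{(0,\lambda)})$ with $\lambda\vdash 2n$ even distinct proceeds analogously: the obstruction $(j_1-i_1)+(j_2-i_2)=-1$ now has integer solutions which reduce the residual shapes to single rows $(\ell_1)$, and the top odd exponent $2\ell_1-1$ coming from the single-variable numerator $(1-q^{2c_k})^2$ at $c_k=\ell_1-1/2$ is cancelled exactly once in the denominator, leaving net multiplicity $+1$. The main technical obstacle is the residual-partition classification itself: rigorously verifying the $|p_+|+|p_-|-|z_+|-|z_-|=l$ balance along all three $B_n$-root types to confirm that only hook (respectively single-row) shapes give residual points, and then checking that the stated cancellation counts survive in all boundary subcases (very small $\ell_1$ or $k$, mixed-parity corner configurations, etc.) before the explicit $\mu$-computation collapses cleanly.
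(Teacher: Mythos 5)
Your argument breaks at the classification step, and this is fatal to everything downstream. For $m\in(\ZZ\pm 1/4)_+$ the parameter is \emph{generic} in the sense of \cite{HO,OpdSol}: since $m\notin\{0,1/2,1,\dots,n-1\}$, \emph{every} partition $\rho\vdash n$ is $m$-regular, i.e.\ every content vector $\xi(m,\rho)$ is a residual point. The residual-point balance $|p_+|+|p_-|-|z_+|-|z_-|=n$ does not force a hook shape; for example the square diagrams with $m=1/4$ and the rectangular diagrams of the key classification (cases (b)--(e) of the paper's analysis) are residual and are not hooks. Likewise for $m_\pm=1/2$ the orbits of residual points are parameterized by partitions $\lambda\vdash 2n$ with even distinct parts, and the associated shapes $\rho$ (those whose $1/2$-content vectors have jump sequence $(\lambda-\mathbf{1})/2$) are staircase-like, not single rows. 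So your explicit product computation only ever sees a measure-zero subfamily of the residual points the proposition is about.

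Even setting that aside, the conclusion you reach --- that the top odd factor always has net multiplicity $+1$ --- cannot be correct, because there exist residual points whose residue has \emph{no} odd cyclotomic factors at all (this is exactly what makes the cuspidal formal degrees special, and classifying these shapes is the content of the surrounding theorem). The subtlety you are missing is twofold. First, cross-terms $t_it_j$ with the two fillings in \emph{opposite} congruence classes modulo $\ZZ$ produce factors $(1-q^{2a})$ with $a\in\ZZ$, which feed into odd cyclotomic multiplicities only through the relation $\cycl(k)=\sum_{s\ge1}\mult(sk)$; these contributions (the $\mult(2k)$ and $\mult(4k)$ terms) can and do cancel the "direct" contributions at the top, so the maximal $p_m$ with $\cycl(p_m+1/2)\neq0$ is in general strictly smaller than the naive top exponent, and at that lower argument several terms of the sum are simultaneously nonzero. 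Second, because of this, proving positivity of the multiplicity at the true maximum requires a careful bookkeeping of $\mult_\pm$, $\mult_{+,-}$, and the single-root denominators as functions of the diagonal-length profile of $T_m(\rho)$, for arbitrary $\rho$ --- not a count of numerator versus denominator occurrences of one factor $(1-q^{4M})$. A correct proof has to first reduce to shapes where cancellation could conceivably occur (this reduction is where the rectangularity of $\rho$ emerges as a \emph{conclusion}, not a hypothesis) and then track the piecewise-linear behaviour of the multiplicity functions near their breakpoints.
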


The proof of Proposition \ref{prop:oddred}, hence of Proposition \ref{prop:odd}, 
as well as the classification of pairs $(m,\rho)$ 
(with $m\in(\ZZ\pm 1/4)_+$ and $\rho\vdash n\geq 0$) 
and of $1/2$-unipotent classes $\lambda$ for which no odd cyclotomic polynomials appeared in the residue of $\mu$-function as a factor, will be given in separate sections.

\subsection{Counting odd cyclotomic polynomials in the case $\delta=1/2$}\label{sub:OO}

Let $\lambda\vdash 2n$ be a partition with even, distinct parts. 
Let $\respt:=\respt_{1,1/2;\lambda}$ be the corresponding (infinitesimally) real 
residual point for $\mu_{1/2,1/2}^n$ as in \eqref{eq:resptreal}, i.e.~the
coordinates of $\respt$ are of the form $q^{x}$ for $x\in(\ZZ+1/2)_+$, where $q^x$ 
appears with multiplicity $h(x)$ defined by: $h(x+1)=h(x)+1$ if $x$ appears in 
the list of jumps $(\lambda-\mbf{1})/2$, and $h(x+1)=h(x)$ else.

\begin{lemma}\label{lem:deltahalf}
The real residual point $\respt$ for $\mu_{1/2,1/2}^n$ is of even degree 
if and only if there exists a  $p\in(\ZZ+1/2)_+$ such that $h$ is given by:
\begin{equation}\label{eq:form}
h(x)=\left\{
\begin{array}{ll}
p -x +1&\mathrm{if\; } 0< x\leq p, \\
0        &\mathrm{else}.\\
\end{array}
\right.
\end{equation}
Otherwise the highest odd cyclotomic factor $\Phi_{2j+1}$ of $\mu_{1/2,1/2}^{n, \{ \respt\}} (\respt)$ appears in its numerator.
\end{lemma}

\begin{proof}
Firstly, from the defining property of $h$ and the definition of $p$ we see that 
$$
h(p)=1.
$$

Assume that $\mu_{1/2,1/2}^{n, \{\respt\}} (\respt)$ contains no odd cyclotomic polynomial factors. 
Assume that $p=2k\pm 1/2$ is the maximum in the support of $h$, and that $0\leq i < k-1$.
A simple book-keeping using the expression of $\mu$-function 
(with $l$ replaced by $n$) determines $\mult(2p-2i)$ in terms of $h$ as follows (the inequality $2p-2i\geq p+3/2$ ensures that only the roots $(t_xt_y)^{\pm 1}$ (with possibly $x=y$) contribute to this multiplicity):
\begin{equation}\label{eq:mult}
\begin{aligned}
&\mult(2p-2i)=h(p-i)\left(h(p-i)-h(p-i-1)+1\right)\\
+&\sum_{x=1}^i h(p-i+x)\left(2h(p-i-x)-h(p-i-1-x)-h(p-i+1-x)\right)
\end{aligned}
\end{equation}
Since $2p-2i\geq p+3/2$, and since all factors of the form
$(1-q^k)$ of $\mu_{1/2, 1/2}^{n, \{\respt\}} (\respt)$ have order less than or equal to $2p+1$, it is clear that
$\mult(2p-2i)$ represents the multiplicity of the odd cyclotomic polynomial $\Phi_{2p-2i}$ as an irreducible factor of $\mu_{1/2, 1/2}^{n, \{\respt\}} (\respt)$. By assumption this multiplicity must therefore be equal to $0$.\\

For $i=0$, the equations $\mult(2p)=0$ and $h(p)=1$ imply that $h(p-1)=2$. Hence \eqref{eq:form} holds for all $x>p-2$. Now suppose by induction that \eqref{eq:form} holds for all $x>p-2j$ for some integer $1\leq j\leq k-1$.
Using this induction hypothesis we see that all summands of
$\mult(2p-2j)$ for $x\leq j-2$ vanish, so $\mult(2p-2j)=2j+2-h(p-2j-1)$. 
Hence we have $h(p-2j-1)=2j+2$, which implies, in view of the definition of $h(x)$ and $h(p-2j+1)=2j$, that 
$$
h(p-2j)=2j+1.
$$ 
Hence we find that \eqref{eq:form} holds for all $x>p-2j-2$ as well.
By induction this proves that $h$ satisfies \eqref{eq:form} if $p=2k-1/2$ for some nonnegative integer $K$, and it shows that \eqref{eq:form} is satisfied for all $x>1/2$ if $p$ is of the form $p=2k+1/2$. And we see that if $h$ does
not satisfy \eqref{eq:form} in this range of values for $x$ then
the numerator of $\mu_{1/2, 1/2}^{n, \{\respt\}} (\respt)$ has an odd cyclotomic polynomial.\\

Finally, for $p=2k+1/2$ we need to rule out the possibility
that $h(1/2)=p-1/2$. So assume that $h(1/2)=p-1/2$. 
We compute the multiplicity of the odd cyclotomic polynomial $\Phi_{p+1/2}=\Phi_{2k+1}$, but now $h(1/2)$ of the roots of the form $t_x t_y^{-1}$ also contribute in the denominator.
To compute this multiplicity, the easiest method is to compare $\mu_{1/2, 1/2}^{n, \{\respt\}} (\respt)$ with the analogous product 
$\mu_{1/2, 1/2}^{n, \{\respt'\}} (\respt')$,
where this time $\respt^\prime$ has one extra coordinate equal to $q^{1/2}$ compared to $\respt$. 
We already know that the multiplicity of $\Phi_{p+1/2}$ in
$\mu_{1/2, 1/2}^{n, \{\respt'\}} (\respt')$ is $0$ since $h^\prime$ (the multiplicity function
of $\respt^\prime$) does satisfy \eqref{eq:form}. 
The difference with the multiplicity of $\Phi_{p+1/2}$ in $\mu_{1/2, 1/2}^{n, \{\respt\}} (\respt)$ consists of two extra factors in the numerator (coming from a factor of the form $(1-q^{1/2}q^p)^2$) and 3 more in the denominator (one coming from $(1-qq^pq^{-1/2})$ and two from $(1-qq^{p-1}q^{1/2})$). 
Hence $\mu_{1/2, 1/2}^{n, \{\respt\}} (\respt)$ contains the factor $\Phi_{p+1/2}$, which violates our assumption. \\

To sum up, we have shown in all cases that if $h$ does not satisfy
\eqref{eq:form} then the highest odd cyclotomic polynomial $\Phi_{2j+1}$
(where we order the cyclotomic polynomials by the order of their associated roots)
of $\mu_{1/2, 1/2}^{n, \{\respt\}} (\respt)$ will be a factor of the numerator.
\end{proof}

As a consequence of Lemma \ref{lem:deltahalf}, we obtain the following:

\begin{prop}\label{cor:deltaishalf}
Suppose $n, r \in \ZZ_+$. In order that the expression $\mu_{1/2, 1/2}^{n, \{\respt\}} (\respt)$ contains no odd cyclotomic polynomials for a residual point $\respt = \respt_{\fb=1,1/2; \lambda}$, it is necessary and sufficient that the partition $\lambda$ is given by $\lambda=[2r, 2r-2, 2r-4, \ldots, 2] \vdash r(r+1)$ and $n=(r/2)(r+1)$ for some $r\in \ZZ_+$. In this case, this residual point represents a rank $0$ spectral transfer morphism $T_{1/2, r+1/2}^0 \to T_{1/2,1/2}^n$, and we have $$
\mu_{1/2, 1/2}^{n, \{\respt\}} (\respt)=\prod_{k\in\ZZ_+}(1+q^k)^{-h(k+1/2) - h(k-1/2)}
$$ 
with $h(\cdot)$ as in Lemma \ref{lem:deltahalf}.
\end{prop}

\subsection{Counting odd cyclotomic polynomials in the case  $m\in(\ZZ\pm 1/4)_+$}\label{sub:oddextra}

Given a pair $(m,\rho)$ with $m\in(\ZZ\pm 1/4)_+$ and $\rho \vdash r$ a partition, let us introduce some notations 
attached to the $m$-tableau $T_m(\rho)$ of $\rho$. 

We denote the entries of the upper-left, upper-right and lower-left cornered boxes of $T_m(\rho)$ by $m, p_+, p_-$ respectively. 
Note that $p_+\geq m$ and $p_+-m \in \ZZ$. 
Let $a_m \in [0, 1)$ be determined by $m- a_m \in \ZZ$. Then the $a_m$-diagonal inside the $m$-tableau indicates the change of congruence classes of the entries modulo $\ZZ$. So, all entries of $T_m(\rho)$ are in the same congruence class modulo $\ZZ$ if and only if $p_- - m\in\ZZ$. 
Denote the entry of the last box below $p_+$ by $r_+$, and the entry of the last box horizontal to the right of $p_-$ by $r_-$. 
Below is an example of a Young diagram with $p_+=15/4, m=3/4=a_m, p_-=9/4$ and $r_+=7/4, r_-=5/4$.
\begin{center}
\ytableausetup{mathmode, boxsize=2.2em}
\begin{ytableau}
 3/4 & 7/4 & 11/4 & 15/4  \\
 1/4 & 3/4 & 7/4 & 11/4 \\
 5/4 & 1/4 & 3/4 & 7/4\\ 
 9/4 & 5/4
\end{ytableau}
\end{center}
We will now study the \emph{odd} cyclotomic polynomial factors in the $q$-rational part of residues 
$\mu_{1/4,m}^{r, \{\respt_{\rho}\} } (\respt_{\rho})$ at a real residual point $\respt_{\rho}=\respt_{\fb=2,m;\rho}$.
Since we neglect even cyclotomic polynomials of the regularized value of $\mu_{1/4,m}^r$, after substitution of $(t_1,\dots,t_r)$ by 
$\respt_{\rho}$ (whose coordinates all are odd powers of $v$)
we can replace $\mu_{1/4,m}^r$ by  the following product (as usual, up to nonzero rational constants and powers of $v$):
\begin{equation}\label{eq:goodwill}
\begin{aligned}  
 &\frac{1}{(1-q)^{r} } \prod_{z=1}^{r} \frac{(1-t_z^2)^2}{(1-q^{2m}t_z) (1-q^{-2m} t_z)} \times \\
&\prod_{1 \leq i < j \leq r} \frac{(1-t_i t_j)^2(1-t_i t_j^{-1})^2}{(1-q^2 t_i t_j) (1-q^{-2} t_i t_j) (1-q^2 t_i t_j^{-1}) (1-q^{-2} t_i t_j^{-1})}.
\end{aligned}
\end{equation}
Recall that the coordinates of $\respt_{\rho}$ are of the form $q^{2c(b)}=v^{4c(b)}$ where $b$ runs over the 
boxes of $T_m(\rho)$ and $c(b)\in(\ZZ\pm 1/4)_+$ denotes the content of the box $b$. 
Since $\mu^r_{1/4,m}$ is $W_0^r$-invariant we may replace any number of coordinates $q^{2c(b)}$ by its reciprocal $q^{-2c(b)}$. 
Note however that $c(b)$ and $-c(b)$ are always in a different congruence class modulo $\ZZ$ in the current situation. This is a key fact in all that follows.\\

We choose Weyl group elements $w_\pm$ such that $w_\pm(\respt_{\rho})$ has all its coordinates of the form $v^{4x}$ with $x\pmod{\ZZ}=\pm m$. 
Then the multiplicity $h_\pm(x)$ of $v^{4x}$ in $w(\respt_{\rho})$ is \emph{independent} of the choice of $w_\pm$. 
This defines two multiplicity functions $h_{\pm }:\ZZ\pm m\to\ZZ_{\geq 0}$ which satisfy the obvious relation $h_-(x)=h_+(-x)$ for all $x$. 

The coordinates of $w_+(\respt_{\rho})$ are the contents of the boxes of 
$T_{\rho}(m,+)$, 
which is \textbf{defined} as the Young tableau of $\rho$ with its boxes above or on the $a_m$-diagonal filled like 
those of $T_\rho(m)$, 
but below the $a_m$-diagonal we multiply the contents of the boxes of $T_\rho(m)$ by  $-1$. 
Similarly, we \textbf{define} $T_{\rho}(m,-)$ by multiplying the content of the boxes of $T_\rho(m)$ above or on the $a_m$-diagonal by $-1$, leaving the boxes below the $a_m$-diagonal unchanged. 
Hence the multiplicity of the coordinate $q^{2x}$ in  $w_\pm(\respt_{\rho})$ 
is equal to the length of the $x$-diagonal in $T_{\rho}(m,\pm)$. 
\begin{rem}\label{rem:xpm} 
We introduce the following convention, which will only be used in this section. Given $m\in(\ZZ\pm 1/4)_+$, for $x\in\ZZ\pm 1/4$ we adopt the notation $x^\pm$ to denote the unique element $x^\pm\in \{-x,x\}$ such that $x^\pm\in \ZZ\pm m$.
For example, $m^\pm=\pm m$, and we always have $p_+^+=p_+$. 
\end{rem}
Now we analyze the multiplicity functions $\mult:=\mult_f$ and 
$\cycl(k):=\cycl_f(k)$ (cf.~Definition \ref{def:multi} and \eqref{eq:cycl}) for 
$f=\mu_{1/4,m}^{r, \{\respt_{\rho}\} }(\respt_{ \rho})$ and $k\geq (p_+ + p_-)/4$. 
Let $k \in \ZZ_{\geq 0} + 3/2$ (we use $3/2$ here, and not $1/2$, because this turns out to be more convenient, 
and since our normalization of Haar measures makes sure that the factor $\Phi_1=q-1$ 
has multiplicity $0$ in the formal degree of any discrete series character).
We will need to consider the functions $\mult(ak)$ for $a=1, 2, 4$. As seen below, we distinguish between various 
contributions, coming from factors associated with different types of roots:
\begin{itemize}
\item $\mult_+(k)$ and $\mult_-(k)$:  For the function $\mathrm{mult}_\pm(k)$
we take into consideration the contribution to $(1-q^{2k})$ 
from the $(1-t_i t_j)^2$-terms in the numerator, where $t_i$, $t_j$ 
are coordinates of $w_\pm(\respt_{\rho})$ (hence $t_i=q^{2x}$, $t_j=q^{2y}$ and 
$x,y$ are both in the congruence class of $m \pmod{\ZZ}$ or $-m \pmod{\ZZ}$), the factors 
$(1-q^2t_i t_j)$ and $(1-q^{-2}t_i t_j)$ in the denominator for such $t_i,t_j$,  
as well as the factors $(1-t_z^2)^2$. 

In this way, any (unordered) pair of boxes of $T_\rho(m,\pm)$ with contents $x$ and $y$ such that $x+y\geq 3/2$  
contributes $+2$ to $\mathrm{mult}_+(x+y)$ 
and $-1$ to $\mathrm{mult}_+(x+y+1)$ and $\mathrm{mult}_+(x+y-1)$. 
Moreover, every single box of $T_m(\rho)$ contributes  $+2$ to  $\mathrm{mult}_+(2x)$.
Note that the maximal entry $p_\pm$ occurs only once in $T_m(\rho)$.  
This implies in all cases easily that $\mult_\pm(k)=0$ for $k\geq 2p_\pm$.\\

\item $\mathrm{mult}_{+,-}(2k)$:  Consider factors of the form $(1-q^{4k})=(1-q^{2k})(1+q^{2k})$, where $4k$ equals twice an odd number.
In $\mathrm{mult}_{+,-}(2k)$, we count such factors of $\mu_{1/4,m}^{r, \{\respt_{\rho} \}} (\respt_{ \rho})$ 
arising from type $\DDD$-roots, via factors of the form 
$(1-t_i t_j)^2$ (in the numerator) 
or $(1-q^2t_i t_j)$ or $(1-q^{-2}t_i t_j)$  (both in the denominator), with $t_i$ a coordinate of $w_+(\respt_\rho)$,  
and $t_j$ a coordinate of $w_-(\respt_\rho)$. 

In such terms, the pair $\{i,j\}$ corresponds to a pair of boxes, one with entry  
$x^+$ of $T_m(\rho,+)$ and one with entry $y^-$ of  $T_m(\rho,-)$, thus 
in different congruence classes modulo $\ZZ$. 
(If $T_m(\rho)$ contains entires below the $a_m$-diagonal, then $x^+$ is on or above the $a_m$-diagonal,
and $y^-$ is below that diagonal in $T_m(\rho)$.) 
In the numerator terms we need that $2k=x^++y^-$ is an \emph{odd} integer. 

Then $t_i=q^{2x^+}$ is a coordinate of $w_+(\respt_{\rho})$, and 
$t_j=q^{2y^-}$ is a coordinate of $w_-(\respt_{\rho})$. 
The cardinality of the set of unordered pairs $\{i,j\}$ such that the corresponding coordinate pair $\{t_i,t_j\}$ of $\respt_{\rho}$ satisfies 
$\{t_i,t_j\}=\{q^{2x^+},q^{2y^-}\}$ (with $x^++y^-$ \emph{odd}) 
is $h_+(x^+)h_-(y^-)$. Each such pair contributes $+2$ to  
$\mathrm{mult}_{+,-}(x^++y^-)$. 
On the other hand, an unordered pair of such boxes with $x^++y^-$ \emph{even}  
contributes  $-1$ to $\mathrm{mult}_{+,-}(x^++y^-- 1)$ and to $\mathrm{mult}_{+,-}(x^++y^-+1)$. Notice that $\mult_{+,-}(2k)=0$ if $2k>p_++p_-^-$, for obvious reasons.\\

\item $\mathrm{mult}_{+,-}(4k)$: Similar as for $\mathrm{mult}_{+,-}(2k)$, but now 
counting the multiplicity of the factors 
of the form 
$(1-q^{8k})=(1+q^{4k})(1+q^{2k})(1-q^{2k})$, 
where $8k$ is four times an odd number. For factors in the numerator we are thus searching 
$x,y$ such that $4k=x^++y^-\equiv 2\pmod{4}$. 
In the denominator terms we should solve $4k=x^+ + y^- \pm 1\equiv 2\pmod{4}$.
(Since our overall assumption will be that $4k\geq (p_++p_-)$, 
$\mathrm{mult}_{+,-}(4k)$ is possibly nonzero only for the smallest values of $k$ in our range, depending on the congruence class of $p_+ + p_-$ modulo $4$.)\\

\item The terms $-h_\pm(k\mp m)$, coming from the denominator in the first line of 
\eqref{eq:goodwill}, as well as the possible contributions from this denominator to $(1-q^{2k})$
via a contribution to $(1-q^{4k})$ and $(1-q^{8k})$, 
in other words the terms $-h_\pm(2k\pm m)$ and $-h_\pm(4k\pm m)$.
\end{itemize}

To sum up, for $k\in(\ZZ+ 1/2)_+$ we have:
\begin{equation}\label{eq:multodd}
\mult(k):=(\mult_+(k)-h_+(k-m))+(\mult_-(k)-h_-(k+m))
\end{equation}
while for $n\in\ZZ_{+}$ we have:
\begin{equation}\label{eq:multeven}
\mult(n):=(\mult_{+,-}(n)-h_-(n-m))-h_+(n+m)
\end{equation}

We now compute each of these summands in terms of $h_-$ and $h_+$. 
Note that both $2p_+-k$ and $2p_--k$ are integers since $p_+, p_- \in \ZZ \pm 1/4$ and $k \in (\ZZ_+ + 1/2)_+$. Firstly, we look at the factors  
$$
\frac{(1-t_i t_j)^2}{(1-q^2 t_i t_j)(1-q^{-2} t_i t_j)}.
$$
To find out the multiplicity $\mult_+(k)$ of the factor $(1-q^{2k})$, we separate two cases, based on the parity of $2p_+-k$.\\

Case (i): $2p_+-k = 2i$ is even. Then
\begin{equation}
\begin{aligned}
&\mult_+(k)\\ 
=& 2\sum_{x=1}^i h_+(p_+-i+x) h_+(p_+-i-x) + 2\times \frac{1}{2}h_+(p_+-i)[h_+(p_+-i)-1]+2h_+(p_+-i)  \\
&-\sum_{x=1}^i h_+(p_+-i+x)h_+(p_+-i-x+1) - \sum_{x=0}^i h_+(p_+-i+x) h_+(p_+-i-x-1).
\end{aligned}
\end{equation} 

Because $t_i t_j =q^{2(c(i)+c(j))}$ with $i<j$, we need to count twice the multiplicities of the entries in the Young diagram whose sum is $k$ (corresponding to $(1-t_i t_j)^2$), and subtract once those with sum $k+1$ (corresponding to $1-q^2t_i t_j$) and similar for $k-1$ (corresponding to $1-q^{-2} t_i t_j$). 
For the first case we add the terms $2h_+(x)h_+(y)$ for $(x, y)$ a solution to $x+y=k$ with $p_+\geq x\geq y$ and $x, y\equiv p_+\pmod{\ZZ}$. 
The solutions to this equation yield the first 2 terms in the formula
(The second term corresponds to $x=y=k/2=p-i$). The third term corresponds to the factor $(1-t_z^2)^2$ in the $\mu$-function. 
The last two terms correspond to the factor $(1-q^2t_i t_j)(1-q^{-2} t_i t_j)$.\\

We introduce the $\Delta$-operator which is defined by 
\begin{equation}\label{Delta}
\Delta h(x) = 2h(x) - h(x+1)-h(x-1),
\end{equation}
and the ``jump function'' for all $x$:
\begin{equation}\label{jump}
J_\pm(x)=h_\pm(x)-h_\pm(x+1) \in \{-1, 0, 1\}.
\end{equation}
With these notations we can rewrite the formula of $\mathrm{mult}_+ (k)$ neatly (if $k=2p_+-2i$):
\begin{equation*}
\mult_+(k) = \sum_{x=1}^i h_+(p_+-i+x) \Delta h_+(p_+-i-x) \\
+ h_+(p_+-i)[1-J_+(p_+-i-1)].
\end{equation*}

Case (ii): $2p_+-k=2i-1$ is odd. In this case for the numerator we lose the terms corresponding to $(k/2, k/2)$ because of parity, 
and also $(1-t_z^2)^2$ does not contribute because of parity.
But for the denominator we have the contributions corresponding to $(x, y)$ 
with $x=y=(k+1)/2$ and $x=y=(k-1)/2$. So the formula becomes:
\begin{equation*}
\begin{aligned}
&\mult_+(k)\\
=&2\sum_{x=1}^i h_+(p_+-i+x) h_+(p_+-i-x+1) \\
-& \sum_{x=1}^i h_+(p_+-i+x) h_+(p_+-i-x) - \frac{1}{2}h_+(p_+-i)[h_+(p_+-i)-1]\\
-& \sum_{x=1}^{i-1} h_+(p_+-i+x+1) h_+(p_+-i-x+1)- \frac{1}{2}h_+(p_+-i+1)[h_+(p_+-i+1)-1]
\end{aligned}
\end{equation*}
Using $\Delta$ and $J_{\pm}$ we can rewrite this formula as (for $k =2p_+-2i+1$)
\begin{equation*}
\mult_+(k) = \sum_{x=1}^i h_+(p_+-i+x) \Delta h_+(p_+-i-x+1) + \frac{1}{2}[h_+(p_+-i+1)+h_+(p_+-i)][1-J_+(p_+-i)].
\end{equation*}
For all $k \in \ZZ_{\geq 0} +\frac{3}{2}$ we can formally write 
\begin{equation}\label{eq:mult+}
\mult_+(k) = \sum_{\substack{x+y=k, \; p_+\geq x>y\\ x\equiv y \equiv m (\ZZ)}} h_+(x) \Delta h_+(y) + R_+(k)=:M_+(k)+R_+(k),
\end{equation}
where the ``remainder'' $R_+(k)$ denotes the term which is not in the summation symbol of the above formulae. \emph{Observe that $R_+(k)\geq 0$}. \\

By virtue of symmetry we obtain the formula of $\mult_-(k)$ from the formula of $\mult_+(k)$ by replacing $+$ by $-$.
But it is important to observe here that the contributions of $(1-t_z^2)^2$ of the form $(1-q^{2k})$ with $k\equiv 2m\pmod{2\ZZ}$
contribute to $\mult_+(k)$, while those with $k\equiv -2m\pmod{2\ZZ}$ contribute to $\mult_-(k)$. 
Together we have used all such contributions coming from this factor of the $\mu$-function exactly once. Hence we may write
\begin{equation}\label{eq:mult-}
\mult_-(k) = \sum_{\substack{x+y=k, \; p_-\geq x>y\\ x\equiv y \equiv -m\pmod{\ZZ}}} h_-(x) \Delta h_-(y) + R_-(k), 
\end{equation}
with $R_-(k)\geq 0$. 

\begin{rem} 
Observe that $J_-(x) + J_+(-x-1) = 0$ for all $x\in\ZZ\pm 1/4$. 
\end{rem}

Next we look at the summand $\mathrm{mult}_{+,-}(2k)$. We need to consider the expression 
$$
\frac{(1-t_i t_j)^2}{(1-q^2 t_i t_j)(1-q^{-2} t_i t_j)}.
$$
where we now take $t_i$ and $t_j$ from opposite congruence classes modulo $\ZZ$. Then $c(i)+c(j)\in\ZZ$.
The contribution to an odd cyclotomic factor $(1-q^{2k})$ (with $2k$ odd) from terms of this kind comes from their contribution to the factor of the form $1-q^{2(c(i)+c(j))}$ with $c(i)+c(j)=2k$ or $4k$ (and not $k$, because $c(i)+c(j)\in\ZZ$ now). 
This readily yields: 
\begin{equation}\label{eq:multpm}
\mathrm{mult}_{+,-}(2k) = \sum_{\substack{a+b=2k\\ a \equiv m(\ZZ), b\equiv -m(\ZZ)}} h_+(a) \Delta h_-(b) = \sum_{\substack{a+b=2k\\ a \equiv m(\ZZ), b\equiv -m(\ZZ)}} \Delta h_+(a)\, h_-(b).
\end{equation}
The second equality holds by symmetry. \\

Finally, in our range $k\geq(p_++p_-)/4$, we need to subtract the multiplicity of the odd cyclotomic polynomials 
occurring in the factors 
$(1-q^{2k})$, $(1-q^{4k})$ and $(1-q^{8k})$ appearing in the denominator of the factors  
$$
\frac{1}{(1-q^{2m}t_z) (1-q^{-2m}t_z)}. 
$$
This yields 12 contributions to $\mult(k)$: 
$-h_\pm(k\mp m)$, $-h_\pm(2k\pm m)$ and $-h_\pm(4k\pm m)$.\\

Now we turn to the criterion for $\cycl(k)=0$. 

\begin{lemma}\label{lem:belowa_m}
Assume that $(m,\rho)$ (with $\rho$ a partition of the rank $r$) is such that for the odd cyclotomic factors of 
$f=\mu_{1/4,m}^{r, \{\respt_{\rho} \} }(\respt_{\rho})$
we have $\cycl(k)=0$ for all $k\in(\ZZ+1/2)_+$ and $k>p_+$. 
If $r=1$ then $m=1/4$; otherwise $T_m(\rho)$
contains boxes below the $a_m$-diagonal (so $p_-^-=p_-$ if $r>1$).
\end{lemma}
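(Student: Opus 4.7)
The plan is to prove the contrapositive. Suppose we are not in the cases enumerated in the conclusion, so either (A) $r = 1$ with $m \neq 1/4$, or (B) $r > 1$ with $T_m(\rho)$ having no boxes strictly below the $a_m$-diagonal. I will produce a half-integer $k > p_+$ with $\cycl(k) > 0$, contradicting the hypothesis.

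First I record two standing reductions. The largest exponent $2k$ appearing in any factor of \eqref{eq:goodwill} is $4p_+$, coming from the factor $(1 - t_{z_0}^2)^2$ with $t_{z_0} = q^{2p_+}$ the unique coordinate equal to $q^{2p_+}$; so $\mult(k) = 0$ for $k > 2p_+$ and hence $\cycl(k) = \mult(k)$ for every $k \in (p_+, 2p_+]$. Under hypothesis (B), $h_-$ is supported on negative reals (indeed $h_-(x) = h_+(-x) = 0$ for $x > 0$), so in \eqref{eq:multodd} we have $\mult_-(k) = 0$ and $h_-(k + m) = 0$ for all $k > 0$.

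For case (A): with $\rho = (1)$ and $\respt_{0,\rho} = q^{2m}$, substitution into \eqref{eq:goodwill} and regularization of the zero factor $(1 - q^{-2m}\cdot q^{2m})$ in the denominator yield
\[
f \;\equiv\; (1-q)^{-1}(1 - q^{4m}) \;=\; \prod_{d \mid 4m,\; d > 1}\Phi_d(q) \pmod{\mathbf{M}_{ev}}.
\]
Since $4m$ is a positive odd integer, this product is empty precisely when $m = 1/4$ (giving $f = 1$, the trivial case); for $m > 1/4$ the factor $\Phi_{4m}$ contributes $\cycl(2m) = 1 > 0$ with $2m > m = p_+$.

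For case (B): let $s \geq 1$ denote the length of the top plateau of $\rho$, so $\rho_1 = \rho_2 = \cdots = \rho_s$ with $\rho_{s+1} < \rho_1$ or $\rho$ having only $s$ rows. The no-boxes-below-diagonal constraint forces $s \leq 1 + \lfloor m \rfloor$. My central claim is
\[
\mult(2p_+ - s + 1) \;=\; 1.
\]
Granted this, $k := 2p_+ - s + 1$ lies in $(\ZZ + 1/2)_+$ (as $2p_+ \in \ZZ + 1/2$) and strictly exceeds $p_+$, because $p_+ = m + \rho_1 - 1 > s - 1$: either $\rho_1 \geq s$ and $m > 0$, or in the single-column case $\rho = (1^r)$ the sharper inequality $m > \lfloor m \rfloor \geq r - 1 = s - 1$ applies.

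To verify the claim, observe that under hypothesis (B), the multiplicity $h_+(p_+ - l)$ has a trapezoidal profile of height $\min(\rho_1, s)$: it rises from $1$ to $\min(\rho_1, s)$ on $l \in [0, \min(\rho_1,s)-1]$, is constant on $l \in [\min(\rho_1,s)-1, \max(\rho_1,s)-1]$, and decreases linearly back to $0$. Applying \eqref{eq:mult+} (case (i) if $s$ is odd, case (ii) if $s$ is even) at $k = 2p_+ - s + 1$, the interior contributions to $M_+(k)$ all vanish because $\Delta h_+ \equiv 0$ on the flat region, so that only the boundary term $h_+(p_+) \cdot \Delta h_+(p_+ - s + 1)$ and the remainder $R_+(k)$ survive. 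A case analysis on whether $\rho_1 > s$, $\rho_1 = s$, or $\rho_1 < s$ (the last including the column case $\rho = (1^r)$) computes both $\mult_+(k)$ and the correction $h_+(k - m) = h_+(m + 2\rho_1 - s - 1)$ explicitly, and in each of the three subcases the difference $\mult(k) = \mult_+(k) - h_+(k - m)$ equals $1$.

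The main obstacle is this three-way case analysis, where one must track a delicate combinatorial identity relating the remainder $R_+(k)$ in \eqref{eq:mult+} to the diagonal correction $h_+(k - m)$ in \eqref{eq:multodd}. The structural reason behind the identity $\mult(k) = 1$ is that the residual point $\respt_{0,\rho}$ always contributes an odd cyclotomic factor $\Phi_{2(2p_+ - s + 1)}$ of multiplicity exactly one to \eqref{eq:goodwill}, reflecting the single box at the top-right corner of $T_m(\rho)$ that sits beyond the trapezoidal plateau.
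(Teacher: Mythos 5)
Your argument is correct and follows essentially the same route as the paper: the case $r=1$ is identical, and in the case of no boxes below the $a_m$-diagonal your witness $k=2p_+-s+1$ coincides with the paper's $k=p_+ + r_+$ (since $r_+=p_+-s+1$ when $s$ is the multiplicity of the largest part), after which both arguments reduce $\cycl(k)$ to $\mult_+(k)-h_+(k-m)$ and exploit the vanishing of $\Delta h_+$ on the linear stretch $(r_+,p_+)$ away from $y=m$. The only differences are cosmetic: you claim the sharper equality $\mult(k)=1$ where the paper settles for $\geq 1$, and a few of your descriptive statements (the exhaustiveness of the dichotomy proving $k>p_+$, the full trapezoidal profile, and the claim that "only the boundary term survives" when $\rho_1\leq s$ puts $y=m$ inside the summation range) are loosely phrased but repaired by the case analysis you invoke.
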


\begin{proof}
If $r=1$ then $f\equiv (q-1)^{-1}(q^{4m}-1)$ modulo even cyclotomic factors. Hence unless 
$4m=1$ we have $\cycl(2m)=1$ and $k=2m > p_+=m$, contradicting the assumption.\\

Assume that $r>1$, and that $T_m(\rho)$ has no entires below the 
$a_m$-diagonal. Put $k=p_++r_+\in (\ZZ+1/2)_+$.
Now $\mult_+(k)-h_+(k-m)\geq 1$ by \eqref{eq:mult+} (use that 
$\Delta(h_+)(r_+)\geq 1$ and that $\Delta(h_+)(m)\geq 1$, and that for all $y$ such that $r_+< y\leq p_+$ and $y\not= m$, we have 
$\Delta(h_+)(y)=0$). 
Since clearly $2k>p_++p_-^-$, we thus have $\mult_{+,-}(sk)=0$ for all 
$s\geq 0$. 
Also $2k\pm m>p_+>p_-^-$, so $h_\pm(2k\pm m)=0$, and  $k+m>p_-^-$ so $h_-(k+m)=0$. 
Hence \eqref{eq:cycl}, \eqref{eq:multodd} and \eqref{eq:multeven} imply that $\cycl(k)=\mult(k)>0$, contradicting the assumption.
\end{proof}

\begin{cor}\label{cor:atleast}
Assume $(m,\rho)$ as in Lemma \ref{lem:belowa_m}, and $r>1$. 
Then $p_++p_-\geq 1$. 
If $k\in(\ZZ+1/2)_+$ satisfies $\mult(2k)\not=0$, then $k\leq (p_++p_-+1)/2$ if $p_++p_-\in\ZZ$ is 
even, and $k\leq (p_+ + p_-)/2$ otherwise. 
\end{cor}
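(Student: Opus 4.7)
The plan is to deduce both assertions from the explicit formulas (\ref{eq:multeven}) and (\ref{eq:multpm}) together with the support properties of $h_\pm$ and $\Delta h_\pm$ implied by the geometry of $T_m(\rho,\pm)$. First I would dispatch the lower bound $p_++p_-\geq 1$: since $r>1$, Lemma \ref{lem:belowa_m} gives a box of $T_m(\rho)$ strictly below the $a_m$-diagonal, i.e.\ some $(i,j)\in\rho$ with $i-j>m$; as $i-j\in\ZZ$ and $m\in\ZZ\pm 1/4$, this forces $i-j\geq \lceil m\rceil$ and hence $p_-\geq\lceil m\rceil-m$, which combined with $p_+\geq m$ yields $p_++p_-\geq \lceil m\rceil\geq 1$.

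For the second assertion I would bound each of the three terms in $\mult(2k)=\mult_{+,-}(2k)-h_-(2k-m)-h_+(2k+m)$ separately. The two subtracted terms vanish unless $2k-m\leq p_-$ or $2k+m\leq p_+$ respectively, and each of these inequalities already yields the strict estimate $2k\leq p_++p_-$, so they contribute nothing beyond the eventual bound. For the main term, (\ref{eq:multpm}) writes $\mult_{+,-}(2k)$ as a sum over pairs $(a,b)$ with $a+b=2k$, $h_+(a)\neq 0$ and $\Delta h_-(b)\neq 0$. Since $h_+$ is supported in $(-\infty,p_+]$ and $h_-$ in $(-\infty,p_-]$, we have $a\leq p_+$, and the only way for $\Delta h_-(b)$ to be nonzero with $b>p_-$ is $b=p_-+1$, in which case $\Delta h_-(p_-+1)=-h_-(p_-)=-1$ (using that the lower-left corner is the unique box realising the content $p_-$). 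Hence $\mult_{+,-}(2k)\neq 0$ implies $2k\leq p_++p_-+1$, and combining all three bounds, $\mult(2k)\neq 0$ forces $2k\leq p_++p_-+1$.

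The final step is a short parity argument. Because $p_+\equiv m$ and $p_-\equiv -m\pmod{\ZZ}$, the sum $p_++p_-$ lies in $\ZZ$, while $2k$ is a positive odd integer. If $p_++p_-$ is even, $p_++p_-+1$ is odd and the raw bound gives $k\leq (p_++p_-+1)/2$. If $p_++p_-$ is odd, $p_++p_-+1$ is even, so $2k=p_++p_-+1$ is impossible and the bound sharpens automatically to $2k\leq p_++p_-$, i.e.\ $k\leq(p_++p_-)/2$. I do not foresee a real obstacle: the only slightly subtle point is remembering that $\Delta h_-$ can be nonzero at $b=p_-+1$, and this single extra unit is precisely what produces the ``$+1$'' in the even-parity bound.
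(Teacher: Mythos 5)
Your proof is correct and, as far as one can tell, follows essentially the same line that the paper's (very terse) proof gestures at: inspecting the explicit formulas (\ref{eq:multeven}) and (\ref{eq:multpm}) together with the support of $h_\pm$ and $\Delta h_\pm$, with the ``$+1$'' coming from $\Delta h_-(p_-+1)=-h_-(p_-)=-1$, and the even/odd split arising from the parity of $2k$ versus $p_++p_-$. The one small expository wobble is the phrase ``strict estimate $2k\leq p_++p_-$'' (it is not strict), but the underlying reasoning --- that the terms $h_-(2k-m)$ and $h_+(2k+m)$ can only be nonzero for $2k\leq p_++p_-$, using $m\leq p_+$ and $p_-\geq 0$, and hence pose no obstruction to the claimed bound --- is sound. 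You have filled in precisely the details the paper leaves to the reader (``use that $p_+\geq m$, and that $p_-^-=p_-$'').
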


\begin{proof}
If $r>1$ then $p_-^-=p_-$, i.e. $p_-$ is in a different class modulo $\ZZ$ than $p_+$. It 
is immediate that $p_++p_-\geq 1$. Looking at \eqref{eq:multeven} and \eqref{eq:multpm},
we see easily that for $K$ above the indicated bounds, $\mult(2k)=0$ (use that 
$p_+\geq m$, and that $p_-^-=p_-$). 
\end{proof}

\begin{lemma}\label{lem:ppluspmin}
Assume that $(m,\rho)$ is as in Lemma \ref{lem:belowa_m}, and $r>1$. 
Then $p_+>p_-$.
\end{lemma}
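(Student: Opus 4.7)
The plan is to derive a contradiction. Since $r>1$, Lemma~\ref{lem:belowa_m} gives $p_-^-=p_-$, so $p_-\in -m+\ZZ$ while $p_+\in m+\ZZ$ lie in distinct cosets of $\ZZ$; the conclusion thus fails only if $p_->p_+$ strictly, and the task is to produce a half-integer $k>p_+$ with $\cycl_f(k)>0$ (where $f=\mu_{1/4,m}^{r,\{\respt_{(0,\rho)}\}}(\respt_{(0,\rho)})$), contradicting the hypothesis of Lemma~\ref{lem:belowa_m}.

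Mirroring the choice $k=p_++r_+$ in the proof of Lemma~\ref{lem:belowa_m}, I would set $k=p_-+r_-$ (and $k=2p_-$ in the degenerate case when the bottom row of $\rho$ is a single box, so $r_-=p_-$). Both $p_-$ and $r_-$ lie in $-m+\ZZ$, hence $k\in -2m+\ZZ=\ZZ+1/2$, and $k\geq p_->p_+$. The partition condition $\rho_1\geq \rho_{\textup{last}}$ together with the inequality on the number of rows forced by $p_->p_+$ ensures that the entire bottom row of $T_m(\rho)$ stays below the $a_m$-diagonal (hence the formula $r_-=p_-+1-\rho_{\textup{last}}$ is valid) and moreover that $r_->m$, a point I would verify by a short case-check for $m\in\{1/4,3/4\}$ using the fact that $p_--p_+\geq 1/2$.

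The heart of the argument is to show $\mult_-(k)\geq 1$ using \eqref{eq:mult-}. In the sum $M_-(k)$ the pair $(p_-,r_-)$ produces $h_-(p_-)\Delta h_-(r_-)=1\cdot 1=1$, because the bottom-right box of $\rho$ is a convex corner of its Young diagram and hence $\Delta h_-(r_-)=1$. The remaining admissible pairs $(p_--\ell,r_-+\ell)$ with $\ell\geq 1$ have their $y$-coordinate on the staircase between $r_-$ and $p_-$, and I would show their combined contribution (together with $R_-(k)$) is nonnegative by a telescoping argument built from $\Delta h_-=J_-(\cdot)-J_-(\cdot-1)$ and the convex/co-corner alternation along the lower-left boundary of $\rho$. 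In the degenerate case $r_-=p_-$ the pair is excluded, but the remainder $R_-(2p_-)=2-h_-(p_--1)=1$ takes its place, since the only box with filling $p_--1$ is the one directly above $p_-$.

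It remains to check that the other summands of $\mult(k)$ in \eqref{eq:multodd} do not cancel this: $h_-(k+m)=0$ since $k+m>p_-$, while $h_+(k-m)=0$ since $r_->m$ forces $k-m>p_+$; and $\mult_+(k)\geq 0$, either trivially when $k>2p_+$ or by a parallel convex-corner analysis at $p_+$ on the upper side of $T_m(\rho)$. For $s\geq 2$ one has $sk>2p_-$, which lies outside the support of every $\mult_\pm$ or $\mult_{+,-}$ term; hence $\cycl_f(k)=\mult(k)\geq 1$, the desired contradiction. The principal obstacle I anticipate is making the telescoping argument of paragraph three rigorous when $\rho$ has several convex and co-corners below the $a_m$-diagonal: a careful bookkeeping of signs of $\Delta h_-$ along the boundary of $\rho$, or equivalently an induction on the number of such corners, is needed to confirm that the initial $+1$ at $(p_-,r_-)$ survives the sum.
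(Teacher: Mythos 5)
Your skeleton is the same as the paper's: the test argument is $k=p_-+r_-$ (with $k=2p_-$ when the bottom row is a single box), and your preliminary reduction to the case where the bottom row of $T_m(\rho)$ lies entirely below the $a_m$-diagonal is precisely the paper's second case, settled by the same inequality between the largest and smallest parts of $\rho$. Two of your worries can in fact be dismissed. First, $r_->m$ holds for \emph{all} $m$, not just $m\in\{1/4,3/4\}$: the smallest part of $\rho$ is at most $\rho_1=p_+-m+1$, hence $r_-\geq p_--p_++m>m$. Second, the telescoping for $\mult_-(k)$ collapses completely: in \eqref{eq:mult-} the second coordinate $y$ is confined to $[r_-,p_-)$, and every diagonal of $T_m(\rho)$ whose $h_-$-content lies in that range meets the bottom row, so its length is determined by the bottom row alone; thus $h_-$ is linear there, $\Delta h_-(y)=0$ for all $r_-<y<p_-$, and only the pair $(p_-,r_-)$ (or $R_-(2p_-)$ in the degenerate case) survives, giving $\mult_-(k)-h_-(k+m)\geq1$ regardless of the corner structure of $\rho$.

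The genuine gap is the assertion $\mult_+(k)\geq0$. It is trivial only when $k>2p_+$, and the range $p_+<k\leq2p_+$ does occur under the hypothesis $p_->p_+$ (it merely requires the bottom row to be long enough). In that range the sum \eqref{eq:mult+} contains terms $h_+(x)\Delta h_+(y)$ with $\Delta h_+(y)=-1$ and there is no termwise positivity; one needs a genuine estimate (for instance an Abel summation bounding the cross terms $\sum_{x+y=k-1}J_+(x)J_+(y)$ against $h_+$ near $k/2$), and a ``convex-corner analysis at $p_+$'' does not supply it -- note that for $p_->p_+$ the last column of $\rho$ typically crosses the $a_m$-diagonal, so the situation at $p_+$ is not parallel to the one at $p_-$. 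The paper avoids proving any such positivity: it observes that \emph{if} $\mult_+(k)-h_+(k-m)<0$, then $k$ is at most $k'=p_++r_+$, the largest argument at which $\mult_+(\cdot)-h_+(\cdot-m)$ is nonzero; at $k'$ that quantity is positive by the computation of Lemma \ref{lem:belowa_m}, and the vanishing of all other contributions to $\cycl$ holds a fortiori at $k'\geq k$, so one obtains $\cycl(k')>0$ and the contradiction at $k'$ instead of at $k$. You should either import this maximal-argument device or carry out the positivity estimate in full; as written, the nonnegativity of $\mult_+(k)$ is the one step of your argument that is neither obvious nor covered by the paper.
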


\begin{proof}
We have already shown that $p_+$ and $p_-$ are in different classes modulo $\ZZ$.
Assume on the contrary that $p_->p_+$. If $r_-$ would be below the $a_m$-diagonal too, then 
\eqref{eq:mult-} implies that for $k=p_-+r_->p_->p_+$ we have $\mult_-(k) > h_-(k+m)$. 
But by the previous Corollary, $\mult(2k)=0$ because $p_-+r_->(p_++p_-)/2$ by our assumption.
By \eqref{eq:cycl}, \eqref{eq:multodd} and \eqref{eq:multeven} our assumption $\cycl(k)=0$  
implies that $\mult_+(k) < h_+(k-m)$. But from \eqref{eq:mult+} it then follows that $k'=p_+ + r_+\geq k$, 
because $k'$ is the largest argument such that $\mult_+(k') > h_+(k'-m) $. As we have seen, $\mult_+(k') > h_+(k'-m)$, and the considerations above concerning the other terms in $\cycl(k)$ hold \textit{a fortiori} for $k'>k$. Thus $\cycl(k')>0$, 
a contradiction.
Hence we conclude that $r_-$ has to be above the $a_m$-diagonal. 
This implies that the largest part $p_+-m+1$ of $\rho$ must be at least 
$p_-+a_m+1$, since this is now smaller than or equal to the length of 
the smallest part of $\rho$. Thus $p_+\geq p_-+a_m+m>p_-$, contradiction. 
\end{proof}

\begin{lemma}\label{lem:equal}
Assume that $(m,\rho)$ is such that $\cycl(k)=0$ for all $k\in(\ZZ+1/2)_+$
with $k\geq (p_++p_-)/4$. Then all parts of $\rho$ are equal, i.e.~the tableau $T_m(\rho)$ is rectangular.
\end{lemma}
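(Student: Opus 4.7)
My plan is to argue by contradiction. Suppose $T_m(\rho)$ is not rectangular, so that $\rho=(\rho_1\geq\cdots\geq\rho_s)$ has at least two distinct row-lengths and hence at least two outer corners. Combined with Lemmas \ref{lem:belowa_m}--\ref{lem:ppluspmin}, this gives a Young diagram which extends strictly below the $a_m$-diagonal, satisfies $p_+>p_-$, and whose bottom-right corner $(s,\rho_s)$ lies on or above the diagonal. Let $i_0<s$ be the smallest index with $\rho_{i_0}>\rho_{i_0+1}$; then the topmost outer corner is at $(i_0,\rho_1)$, and by the definition of $r_+$ in Section \ref{sub:oddextra}, the content of this corner is exactly $r_+$. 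The key observation is that this additional outer corner forces an extra jump $\Delta h_\pm(r_+)\geq 1$ in the multiplicity function, beyond what would be present in the rectangular case.

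I would then choose the half-integer $k^*:=p_++r_+$ and show that $\cycl(k^*)>0$. First note that $k^*>p_+\geq p_-$, so $k^*\geq (p_++p_-)/4$ lies in the hypothesized range. Applying \eqref{eq:mult+}, the summand $(x,y)=(p_+,r_+)$ of $M_+(k^*)$ contributes $h_+(p_+)\cdot \Delta h_+(r_+)\geq 1\cdot 1=1$, because $p_+$ occurs uniquely as a content (condition (A) of Section 5.3) and the outer corner at $(i_0,\rho_1)$ creates a jump of $h_+$ at $r_+$. Since the remainder $R_+(k^*)$ is nonnegative, $\mult_+(k^*)\geq 1$. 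The subtracted term $h_+(k^*-m)=h_+(p_++r_+-m)=h_+(p_++\rho_1-i_0)$ vanishes in the generic case $\rho_1>i_0$ since $k^*-m>p_+$ exceeds the maximal content, and similarly $h_-(k^*+m)=0$ because $k^*+m>p_+>p_-$.

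For the integer-argument contributions appearing in $\cycl(k^*)=\sum_{s\geq 1}\mult(sk^*)$, I would invoke Corollary \ref{cor:atleast}: since $k^*>p_-$ and in fact $2k^*>p_++p_-$ (because $r_+\geq m>p_--p_+$), both $\mult_{+,-}(2k^*)$ and $\mult_{+,-}(4k^*)$ vanish, while all terms $h_\pm(2k^*\pm m)$ and $h_\pm(4k^*\pm m)$ trivially vanish for size reasons. Together with the parallel (nonnegative) analysis for $\mult_-(k^*)$ via \eqref{eq:mult-}, one gets $\cycl(k^*)=\mult(k^*)\geq 1>0$, contradicting the hypothesis and forcing $T_m(\rho)$ to be rectangular.

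The main obstacle lies in the degenerate cases: (i) when the topmost outer corner $(i_0,\rho_1)$ lies below the $a_m$-diagonal (i.e.\ $i_0>\rho_1+\lfloor m\rfloor$), in which case $r_+$ contributes via $h_-$ rather than $h_+$ and one must choose $k^*=p_-+r_+$ or an analogous value, running the argument with roles of $+$ and $-$ partially swapped; (ii) when $i_0=1$ (so only the first row attains $\rho_1$), where $k^*=2p_+$ forces the pair $(x,y)=(p_+,p_+)$ and one must instead extract the positive contribution from the remainder $R_+(2p_+)=h_+(p_+)[1-J_+(p_+-1)]$, verifying positivity via condition (B) of Section 5.3 together with the existence of a second outer corner. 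In each such subcase one selects an alternative outer corner with maximal content or the appropriate sign class, but the bookkeeping of $R_\pm$ and of the parity of $2p_\pm-k^*$ demands a careful, though essentially routine, case check.
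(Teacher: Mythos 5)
Your overall strategy (produce a single half-integer $k^*$ at which $\cycl(k^*)>0$) is not the one the paper uses, and as written it has a fatal gap at the step ``$2k^*>p_++p_-$ implies $\mult(2k^*)=0$''. The correct threshold is $2k^*>p_++p_-+1$: by \eqref{eq:multeven} and the description of $\mult_{+,-}$, the denominator factors $(1-q^{\pm 2}t_it_j)$ coming from the pair of boxes with entries $p_+$ and $p_-$ contribute $-1$ to $\mult_{+,-}(p_++p_-+1)$ when $p_++p_-$ is even (this is exactly the content of Corollary \ref{cor:atleast}). The boundary case $2k^*=2p_++2r_+=p_++p_-+1$ occurs precisely when $r_+=1/4$ and $p_+=p_--\,$, rather, $p_+-p_-=1/2$, and it is nonempty for non-rectangular shapes. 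Concretely, take $m=5/4$ and $\rho=(2,2,2,1)$: then $p_+=9/4$, $p_-=7/4$, $r_+=1/4$ (the box $(3,2)$, above the $a_m$-diagonal), $k^*=p_++r_+=5/2$, and a direct computation with \eqref{eq:mult+}, \eqref{eq:mult-}, \eqref{eq:multpm} gives $\mult(5/2)=1$ but $\mult(5)=-1$, hence $\cycl(5/2)=0$. So your chosen $k^*$ detects nothing here, even though $\rho$ is not rectangular. (The lemma is of course still true for this pair: the obstruction sits at $k=7/2=p_-+r_-$, where $\cycl(7/2)=1$.)

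This example also shows why the paper's argument is structured differently. The paper works with $k_1=\max\{k_1^+,k_1^-\}$, where $k_1^-=p_-+r_-^-$ is on an equal footing with $k_1^+$ and can be the larger of the two; your argument never considers the lower-left corner at all. More importantly, the paper does \emph{not} try to make $\mult(2k_1)$ vanish: it shows $\mult(k_1)>0$, so the hypothesis $\cycl(k_1)=0$ \emph{forces} $\mult(2k_1)\neq 0$, and then Corollary \ref{cor:atleast} converts this into the upper bound $k_1\leq(p_++p_-+1)/2$, from which the shape constraints follow. Finally, your ``obstacle (i)'' (the case where $r_+$ lies below the $a_m$-diagonal) is not a routine sign swap: there the relevant quantities are $k_1^+=p_+-r_+$ and $k_1^-=p_-+r_-$, and the rectangularity comes from the inequality $p_++p_-+(r_--r_+)=k_1^++k_1^-\leq 2k_1\leq p_++p_-+1$ combined with the parity fact that $r_--r_+$ is either $0$ or at least $2$ — an argument of a genuinely different nature from ``exhibit one $k$ with $\cycl(k)>0$''. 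To repair your proof you would essentially have to reproduce these two ingredients, so I would advise restructuring along the paper's lines rather than patching the case analysis.
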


\begin{proof}
Clearly we may assume without loss of generality that $r>1$. 
Assume first that $r_+$ is above the $a_m$-diagonal. Put $k_1^+:=p_++r_+$,
\footnote{The notation $k_1^\pm$ is not to be confused with the notation $x^\pm$ for $x\in\ZZ\pm 1/4$ as in Remark \ref{rem:xpm}. } 
then $\mult_+(k_1^+) > h_+(k_1^+-m)$.  
It follows from \eqref{eq:multeven} that the largest argument $k'$ for which 
$\mult(k')\not=0$ is either $k_1^+$ or $k_1^-:=p_-+r_-^-$ if $k_1^->k_1^+$. 
In any case, $k_1:=\max\{k_1^+,k_1^-\}\geq k_1^+=p_++r_+>(p_++p_-)/2$
(the inequality follows from Lemma \ref{lem:ppluspmin}), and $\mult(k_1)>0$.  

Our assumption $\cycl(k_1)=0$ and 
\eqref{eq:cycl} now force that $\mult(2k_1)\not=0$. Corollary \ref{cor:atleast} then implies that $k_1\leq (p_++p_-+1)/2$ 
(if $p_++p_-$ is even) or $k_1\leq (p_++p_-)/2$ (if $p_++p_-$ is odd). In the second case we reach a contradiction
with the above, so we conclude that $p_++p_-$ is even, and that $k_1=k_1^+=(p_++p_-+1)/2$. 
Then \eqref{eq:multeven} and \eqref{eq:multpm} imply that $\mult(2k_1)=(\mult(2k_1)-h_-(2k_1-m))\leq h_-(2k_1-p_+-1)=1$, so that 
$k_1^+=k_1^-$ is not allowed (since that would imply that $\mult(k_1)=2$, so that $\cycl(k_1)\geq 1$, a contradiction). Thus $k_1^-<k_1^+$. 

Now notice that $2k_1^+=2p_++2r_+=p_++p_-+1$, 
implying that $p_++2r_+=p_-+1$, so that $0<2r_+=1+p_--p_+<1$. 
It follows that $r_+=1/4$, and $p_+=p_-+1/2$. 
Suppose now that $r_-$ is still below the $a_m=1/4$-diagonal, then $r_-\geq 3/4$ and  
$k_1^-=p_-+r_-\geq p_++1/4=k_1^+$, contradicting our earlier conclusion that $k_1^-<k_1^+$. 
Thus $r_-$ must be above the $a_m=1/4$--diagonal, and in fact we must have $r_-=r_+=1/4$.
We finally conclude that $T_m(\rho)$ is a square diagram with $m=1/4$, and $p_+=n+1/4$, 
$p_-=n-1/4$ and $r_- = r_+ = 1/4$. This finishes the case where $r_+$ is above the diagonal.

Next, assume that $r_+$ is below the $a_m$--diagonal. 
Then $r_-$ is below the $a_m$-diagonal as well. \emph{This implies in particular that 
either $r_-=r_+$ (which is what we want to show) or otherwise $r_- - r_+\geq 2$}.

We have seen that the largest argument $k_1$ for which $\mult(k_1)>0$ is $k_1=\max\{k_1^+,k_1^-\}$, 
where this time (because of the congruence classes of $r_-$ and $r_+$ modulo $\ZZ$) $k_1^+=p_+-r_+$ and $k_1^-=p_-+r_-$. 
Since $\cycl(k_1)=0$ we must have that $\mult(2k_1)\not=0$, which 
implies by Corollary \ref{cor:atleast} that $k_1\leq (p_++p_-+1)/2$ 
(if $p_++p_-$ is even) and $k_1\leq (p_++p_-)/2$ (if $p_++p_-$ is odd) as before.
We note that $\mult(p_+ + p_- + 1)=1$ in the first case ($p_++p_-$ even), while $\mult(p_++p_-)=2$ in the second case.
Assume that $p_++p_-$ is even. Then $p_++p_-+(r_--r_+)=k_1^++k_1^-\leq 2k_1 \leq p_++p_-+1$. Thus $r_+=r_-$, as desired.

Next, assume that $p_++p_-$ is odd. 
Then $p_++p_-+(r_--r_+)=k_1^++k_1^-\leq 2k_1 \leq p_++p_-$.
Again it follows that $r_-=r_+$, and we are done.
\end{proof}

\begin{cor}\label{for:mr} 
In the notations of the proof of Lemma \ref{lem:equal}, put $r_-=r_+:=r$. 
We have $m\geq r$. Moreover, $r\equiv m \pmod{\ZZ}$ if and only if $m=r=1/4$.
\end{cor}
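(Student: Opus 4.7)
The plan is to deduce the corollary directly from the two cases already analyzed in the proofs of Lemma~\ref{lem:equal} and Lemma~\ref{lem:ppluspmin}. By Lemma~\ref{lem:equal}, the Young diagram of $\rho$ is a rectangle of some shape $a \times b$; since the last box below $p_+$ and the last box to the right of $p_-$ then both coincide with the lower-right corner $(a,b)$, we automatically have $r := r_+ = r_- = |m - a + b|$. I would then split the argument according to whether $(a,b)$ lies on or above the $a_m$-diagonal or strictly below it.

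In the first case $(a,b)$ sits on or above the $a_m$-diagonal, so $r = m - a + b$ and in particular $r \equiv m \pmod{\ZZ}$. This is precisely the situation handled as the ``first case'' (the case ``$r_+$ above the $a_m$-diagonal'') in the proof of Lemma~\ref{lem:equal}, where it was shown that we are forced into $a = b$, $m = 1/4$ and $r = 1/4$. In the second case $(a,b)$ lies strictly below the $a_m$-diagonal, so $r = a - b - m$; the elementary parity observation that $a - b \in \ZZ$ while $2m \in \ZZ + 1/2$ (since $m \in \ZZ \pm 1/4$) immediately gives $r - m = (a-b) - 2m \notin \ZZ$, so $r \not\equiv m \pmod{\ZZ}$. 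Moreover, because $a - 1 \geq a - b$, the box $(a,1)$ is also strictly below the $a_m$-diagonal, hence $p_- = a - m - 1$. Combining this with $p_+ = m + b - 1$, the strict inequality $p_+ > p_-$ supplied by Lemma~\ref{lem:ppluspmin} translates directly to $2m > a - b$, i.e., $m > a - b - m = r$.

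Putting the two cases together: $m \geq r$ holds unconditionally; and the congruence $r \equiv m \pmod{\ZZ}$ forces Case~1, which in turn forces $m = r = 1/4$, while the converse is immediate. I do not anticipate any serious obstacle here, since every ingredient is already established in the preceding lemmas; the proof amounts essentially to unwinding the rectangular shape and invoking $p_+ > p_-$.
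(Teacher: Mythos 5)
Your proof is correct and follows essentially the same route as the paper's: the dichotomy according to whether the box with filling $r$ lies on/above or strictly below the $a_m$-diagonal is exactly the case split in the proof of Lemma~\ref{lem:equal} (giving $m=r=1/4$ in the first case), and in the second case the identity $m-r=p_+-p_-$, which you obtain by the explicit computation $p_+=m+b-1$, $p_-=a-1-m$, $r=a-b-m$, is the same relation the paper reads off from the rectangular shape, before invoking Lemma~\ref{lem:ppluspmin}. The only difference is notational: you introduce the coordinates $(a,b)$ of the rectangle explicitly, whereas the paper phrases the equality $p_+-m=p_--r$ directly.
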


\begin{proof} 
It was shown in the proof of Lemma \ref{lem:equal} that 
$r\equiv m \pmod{\ZZ}$ implies that $m=r=1/4$. If $r\not\equiv m \pmod{\ZZ}$
then Lemmas \ref{lem:belowa_m} and \ref{lem:equal} imply 
that $p_-\equiv r \pmod{\ZZ}$, and by Lemma \ref{lem:equal} it follows that 
$p_+ - m = p_- -r$, or $m - r = p_+ - p_-$. The assertion now follows from Lemma \ref{lem:ppluspmin}.
\end{proof}

The following proposition is the technical heart of this section, and plays a main role in Section \ref{sec:countingdelta=0} in the classification of the partitions $\lambda$ for which $\mu_{0, \delta}^{\{\respt\}}(\respt)$ has no odd cyclotomic factors (see Proposition \ref{cor:delta01}). We shall prove it by a case-by-case check.

\begin{prop}\label{prop:mainrho}
Assume that $(m,\rho)$ is such that $\cycl(k)=0$ for all $k\in(\ZZ+1/2)_+$
with $k\geq (p_++p_-)/4$. Then $(m,\rho)$ is one of the following possibilities:
\begin{itemize}
\item[(a)] $m$ is arbitrary and $\rho$ is empty.
\item[(b)] $m=1/4$ and $\rho$ is a square diagram, so that $r_- = r_+=1/4$. 
\item[(c)] $m=3/4$, and $\rho$ is a rectangular diagram such that $r_- = r_+=1/4$. 
In this case 
we can write $p_+=n+3/4$, $p_- = n+1/4$ for some $n\in\ZZ_{\geq 0}$.
\item[(d)] $m=5/4$, and $\rho$ is a rectangular diagram such that $r_- = r_+=3/4$. 
In this case 
we can write $p_+=2n+5/4$, $p_- = 2n+3/4$ for some $n\in\ZZ_{\geq 0}$.
\item[(e)] $m=7/4$, and $\rho$ is a rectangular diagram such that 
$r_- = r_+ = 1/4$. In this case we can write $p_+=2n+7/4$, $p_-=2n+1/4$ for some $n\in\ZZ_{\geq 0}$.
\end{itemize} 

If $(m,\rho)$ does not belong to any of these cases, the largest $k_{(m,\rho)}\in\ZZ+1/2$ for which $\cycl(k_{(m,\rho)})\not=0$ should satisfy $k\geq(p_++p_-)/4$. In that case we will have $\cycl(k_{(m,\rho)})>0$.
\end{prop}

\begin{proof}
We have seen in the proof of Lemma \ref{lem:equal} that if $\rho$ is not empty and 
$r_+$ is above the $a_m$-diagonal, then we are in case (b). Hence we now assume that $\rho$ is not empty and $r:=r_+=r_-$ is below the $a_m$-diagonal. We are left with the task of proving that we are in one of the cases (c) to (e). \\

Since we are assuming $k\geq (p_++p_-)/4$, we see that $\cycl(k)=\mult(k)+\mult(2k)+\mult(3k)+\mult(4k)$. 
For the same reason, we see that  $\cycl(3k)=\mult(3k)=0$ (the latter equality holds by assumption).
Hence 
\begin{equation}\label{eq:cyclev}
\cycl(k)=\mult(k)+\mult(2k)+\mult(4k).
\end{equation}
We will see that $\mult(k)$ for $k\geq 1/2$ is given by the remarkably simple 
explicit formula \eqref{eq:mult}, and that $\mult(k)\geq 0$. 
In the proof below we analyze the implications of the 
requirement that $\mult(2k)+\mult(4k)$ cancels $\mult(k)$ for $k\geq (p_-+p_+)/4$.\\

Following the steps and notations of the proof of Lemma \ref{lem:equal}, we put 
$k_1=\max\{k_1^-,k_1^+\}$, with $k_1^\pm=p_\pm\mp r$. Then $k_1$ is the largest argument for which $\mult(k_1)\not=0$. 
Using \eqref{eq:mult+}, \eqref{eq:mult-} and \eqref{eq:multodd} 
the rectangular shape of $T_m(\rho)$ implies that for all $k\geq 1/2$: 
\begin{equation}\label{eq:mult.}
\mult(k)=h_+(k+r)+h_-(k-r)\geq 0.
\end{equation}

The simplicity of this formula is somewhat deceptive. One uses that 
 $\Delta(h_\pm)(y)=1$ if $y=\pm m$ or $y=\mp r$, 
 $\Delta(h_\pm)(y)=-1$ if $y=\pm (p_++1)$ or $y=\mp (p_-+1)$, 
 and  $\Delta(h_\pm)(y)=0$ otherwise.
For example, to see that 
$\mult_+(k)-h_+(k-m)=h_+(k+r)$ for all $k\geq 1/2$, one checks that for $k> 2m$ 
we have 
$$
M_+(k):=h_+(k-m)+h_+(k+r)-h_+(k+p_-+1)=h_+(k-m)+h_+(k+r)
$$ 
and $R_+(k)=0$; for $m-r<k\leq 2m$ we have 
\begin{equation*}
M_+(k) :=h_+(k+r)-h_+(k+p_-+1)=h_+(k+r),
\end{equation*} 
and $R_+(k)=h_+(m)=h_+(k-m)$; while for $1/2\leq k\leq m-r$ we have 
$M_+(k)=h_+(k+r)-h_+(k+p_-+1)$ and $R_+(k)=h_+(m)$, where we observe that 
$h_+(k-m)=h_+(m)-h_+(k+p_-+1)$. Similar observations apply to show that 
$$
\mult_-(k)-h_-(k+m)=h_-(k-r).
$$

The function $h_+(k+r)$ is zero for $k>k_1^+=p_+-r$, then equals the linear function 
$1+k_1^+-k$ for $m-r\leq k\leq k_1^++1$, and then is constant for $1/2\leq k\leq m-r$.

The function $h_-(k-r)$ is zero for $k>k_1^-=p_-+r$, then equals the linear function 
$1+k_1^--k$ for $2r\leq k\leq k_1^-+1$, and then is constant for $1/2\leq k\leq 2r$. \\

Let us now look at $\mult(2k)$. In the rectangular case, by \eqref{eq:multeven} and \eqref{eq:multpm}  we have:
\begin{equation*}
\mult(2k)=\mult_{+,-}(2k)-h_-(2k-m) -h_+(2k+m) = -h_-(2k-p_+-1)
\end{equation*} 
for all $k\geq (p_++p_-)/4$. Here we used that $h_+(2k+m)=0$ if 
$k\geq (p_-+p_+)/4$, because $(p_-+p_+)/2+m=p_++(m+r)/2>p_+$.
To describe the function $\mult(2k)$ in this range, we distinguish the following 
cases for later reference. 
	 \begin{enumerate} 
\item[(a)] $p_++p_-$ and $p_++r$ are both even.
Then the function $\mult(2k)$ is zero for $k> (p_++p_-+1)/2$, equals the linear function $2k-2-p_+-p_-$ for $k_2^a=:(p_++r+1)/2\leq k\leq  (p_++p_-+1)/2:=k_2$, 
and is constant for $(p_++p_-)/4\leq k\leq (p_++r+1)/2:=k_2^b$ (so $k_2^b=k_2^a$ here).
\item[(b)] $p_++p_-$ even and $p_++r$ odd.
Then the function $\mult(2k)$ is zero for $k> (p_++p_-+1)/2$, equals the linear function $2k-2-p_+-p_-$ for $k_2^a=:(p_++r+2)/2\leq k\leq  (p_++p_-+1)/2:=k_2$, 
and is constant for $(p_++p_-)/4\leq k\leq (p_++r)/2:=k_2^b$ (so $k_2^b=k_2^a-1$ here).
\item[(c)] $p_++p_-$ odd and $p_++r$ even.
Then the function $\mult(2k)$ is zero for $k> (p_++p_-)/2$, equals the linear function $2k-2-p_+-p_-$ for $k_2^a=:(p_++r+1)/2\leq k\leq  (p_++p_-)/2:=k_2$, 
and is constant for $(p_++p_-)/4\leq k\leq (p_++r+1)/2:=k_2^b$ (so $k_2^b=k_2^a$ here).
\item[(d)] $p_++p_-$ and $p_++r$ are both odd.
Then the function $\mult(2k)$ is zero for $k> (p_++p_-)/2$, equals the linear function $2k-2-p_+-p_-$ for $k_2^a=:(p_++r+2)/2\leq k\leq  (p_++p_-)/2:=k_2$, 
and is constant for $(p_++p_-)/4\leq k\leq (p_++r)/2:=k_2^b$ (so $k_2^b=k_2^a-1$ here).
	 \end{enumerate}

We will also need to consider the largest value $k_4\in\ZZ+1/2$ such that $\mult(4k_4)\not=0$,  
assuming that this $k_4$ satisfies $k_4\geq (p_++p_-)/4$.  By \eqref{eq:multpm} we have:
\begin{equation*}
\mult(4k)=(\mult_{+,-}(4k)-h_-(4k-m))=-h_-(4k-p_+-1)
\end{equation*}
Hence the summand $\mult(4k)$ is possibly nonzero only for the smallest values of $k$ in the range
$k\geq (p_++p_-)/4$, since $h_-(4k-p_+-1)\not=0$ implies that $4k\leq p_++p_-+1$. 
Considering that $4k_4 =2 \pmod{4}$ we make this more precise:  
Let $k_4$ be the largest $k\in (\ZZ+1/2)_+$ for which $\mult(4k_4)\not=0$.  
\begin{enumerate}
\item[(i)] If $p_++p_-=1 \pmod{4}$ then $k_4=(p_++p_-+1)/4$, and  $\mathrm{mult}(4k_4)=-1$;
\item[(ii)] If $p_++p_-=2 \pmod{4}$ then $k_4=(p_++p_-)/4$, and  $\mathrm{mult}(4k_4)=-h_-(p_--1)$;
\item[(iii)] If $p_++p_-=3 \pmod{4}$ then $k_4=(p_++p_--1)/4$, (and so is not in our range); 
\item[(iv)] If $p_++p_-=0 \pmod{4}$ then $k_4=(p_++p_--2)/4$, (and so is not in our range).
\end{enumerate}
We see that always $k_2>k_4$, and $k_4$ is either not in our range or it is the smallest argument in our range.\\

A function defined on $\ZZ+1/2$ is the restriction of a unique continuous piecewise linear function on $\RR$ whose intervals of linearity have end points in $\ZZ+1/2$. In view of the listed cases (a)--(d), the following definition makes sense.
Note that in all cases (a)--(d), the linear function $2k-2-p_+-p_-$ of $k$ is negative for all $k\leq k_2$.
\begin{definition}
\begin{enumerate}
\item[(1)] For $\mult(2k)$ we distinguish  
the following points $k_2, k_2^a, k_2^b\in(\ZZ+1/2)_+$: It is zero for $k>k_2$, 
it is negative and linear on $[k_2^a,k_2]$, and it is constant again on $[k_4, k_2^b]$, with $0\leq k_2^a-k_2^b\leq 1$.

\item[(2)] Consider the linear function $l_2(k):=2k-2-p_+ - p_-$ describing $\mult(2k)$ on the interval $[k_2^a, k_2]$.  We define $k_2^c\in\ZZ+1/2$ to be the largest number such that $\mult(2k_2^c)$ is strictly larger than the value of $l_2(k_2^c)$. Then $k_2^c=k_2^a-1$ in cases (a) and (c), while $k_2^c=k_2^b$ in cases (b) and (d). 

\item[(3)] We define the \emph{$\mult(2k)$-deficit $d_2=\mult(2k_2^c)-l_2(k_2^c)>0$ at $k_2^c$} to be the difference between $\mult(2k_2^c)$ and this linear expression evaluated at $k_2^c$.  Hence the deficit is $2$ for the cases (a) and (c), and $1$ for the cases (b) and (d).
\end{enumerate}
\end{definition}

It is immediate that the condition $\cycl(k)=0$ for all $k\geq (p_++p_-)/4$ forces that $k_1:=\max\{k_1^+,k_1^-\}=k_2$.
Notice that we also have $k_1^++k_1^-=2k_2-1$ in the cases (a) and (b), and $k_1^++k_1^-=2k_2$ in the cases (c) and (d).
Hence in the cases (a) and (b) we have $|k_1^+-k_1^-|=1$, while for the cases (c) and (d) we have $k_1^+=k_1^-$.
We see that in these cases, the linear expression representing $\mult(k)$ just below $k_1$, cancels 
the linear expression of $\mult(2k)$ just below $k_2$. 
\emph{We also see that if $k_1\not=k_2$, then $k_1>k_2$, and thus 
$\cycl(k_1)= \mult(k_1)>0$}. \\

We denote by $k_1^c$ the largest argument for which there is a positive deficit  
$d_1=\ell_1(k_1^c)-\mult(k_1^c)>0$ 
between $\mult(k)$ and the linear expression $\ell_1$ valid for 
$\mult(k)$ with $k_1^c<k\leq k_1+1$. 
From the analysis below \eqref{eq:mult.}, we see that 
$k_1^c=\min (m-r,2r)-1$, and 
that $d_1=2$ if $m-r=2r$, while $d_1=1$ else. 
We now have more possibilities to distinguish. 
\begin{enumerate}
\item[(a')] As in case (a), where we impose in addition that $k_1^+=k_1^-+1=k_1=k_2$.
This implies that $k_1^\pm=p_\pm\mp r=(p_++p_-\pm 1)/2$, thus $p_+-p_-=1+2r=m-r$
(recall that, from the rectangular shape of $T_m(\rho)$, we have $p_+-m=p_--r$), 
so that $m=3r+1$. 
Hence $k_1^c=\min(m-r,2r)=2r$, with deficit $d_1=1$.
Moreover, $k_2^c=(p_++r-1)/2=(p_++p_-+4r-1)/4$ with deficit $d_2=2$. 
Notice that $k_2^c-k_1^c=(p_+-3r-1)/2=(p_+-m)\geq 0$.
\item[(a'')] As in case (a), where we impose in addition that $k_1^-=k_1^++1=k_1=k_2$.
This implies that $k_1^\pm=p_\pm\mp r=(p_++p_-\mp 1)/2$. 
Thus $p_+-p_-=-1+2r=m-r$, so that $m=3r-1$. 
Hence $k_1^c=\min(m-r,2r)=2r-1$, with deficit $d_1=1$.
Moreover, $k_2^c=(p_++r-1)/2=(p_++p_-+4r-3)/4$ with deficit $d_2=2$. 
Observe that $k_2^c-k_1^c=(p_+-3r+1)/2=(p_+-m)\geq 0$.
\item[(b')] As in case (b), where we impose in addition that $k_1^+=k_1^-+1=k_1=k_2$.
This implies that $k_1^\pm=p_\pm\mp r=(p_++p_-\pm 1)/2$, thus $p_+-p_-=1+2r=m-r$, 
so that $m=3r+1$.  In this case  $k_1^c=\min(m-r,2r)=2r$, with deficit $d_1=1$.
Moreover, $k_2^c=(p_++r)/2=(p_++p_-+4r+1)/4$ with deficit $d_2=1$.
Hence $k_2^c-k_1^c=(p_+-3r)/2=(p_+-m+1)> 0$.
\item[(b'')] As in case (b), where we impose in addition that $k_1^-=k_1^++1=k_1=k_2$.
This implies that $k_1^\pm=p_\pm\mp r=(p_++p_-\mp 1)/2$. 
Thus $p_+-p_-=1+2r=m-r$, so that $m=3r-1$. In this case $k_1^c=\min(m-r,2r)=2r-1$ with deficit $d_1=1$.
Moreover, $k_2^c=(p_++r)/2=(p_++p_-+4r-1)/4$ with deficit $d_2=1$.
Hence $k_2^c-k_1^c=(p_+-3r+1)/2=(p_+-m+2)> 0$.
\item[(c')] As in case (c), where we impose in addition that $k_1^-=k_1^+=k_1=k_2$.
This implies that $k_1^\pm=p_\pm\mp r=(p_++p_-)/2$, thus $p_+-p_-=2r=m-r$, 
so that $m=3r$.  In this case  $k_1^c=2r-1$ with deficit $d_1=2$.
Moreover, $k_2^c=(p_++r-1)/2=(p_++p_-+4r-2)/4$ with deficit $d_2=2$.
Hence $k_2^c-k_1^c=(p_+-3r+1)/2=(p_+-m+1)> 0$.
\item[(d')] As in case (d), where we impose in addition that $k_1^-=k_1^+=k_1=k_2$.
This implies that $k_1^\pm=p_\pm\mp r=(p_++p_-)/2$, thus $p_+-p_-=2r=m-r$, 
so that $m=3r$. In this case $k_1^c=2r-1$ with deficit $d_1=2$.
Moreover, $k_2^c=(p_++r)/2=(p_++p_-+4r)/4$ with deficit $d_2=1$.
Hence $k_2^c-k_1^c=(p_+-3r+2)/2=(p_+-m+2)>0$.
\end{enumerate}
This shows that in all cases, $k_2^c\geq k_1^c$, and if $k_1^c=k_2^c$ then $d_1<d_2$.
This implies that \emph{always}
$\mult(k_2^c)+\mult(2k_2^c)>0$ 
if $k_2^c$ is in our range, i.e.~if $k_2^c \geq(p_++p_-)/4$. 
Therefore, $\cycl(k)=0$ for all $k\geq  (p_++p_-)/4$ could only happen if one of the following 
possibilities hold: Either $k_2^c< (p_++p_-)/4$, or else $(p_++p_-)/4\leq k_2^c \leq k_4 $, and 
$\cycl(k)=(\mult(k_2^c)+\mult(2k_2^c))+\mult(4k_2^c)=0$. We have already mentioned above 
that $k_4\geq(p_++p_-)/4$ implies that $k_4$ is the smallest element of $\ZZ+1/2$ that satisfies 
this inequality. In other words, the second possibility mentioned is equivalent to 
$(p_++p_-)/4\leq k_2^c = k_4 $.

If $k_2^c<(p_++p_-)/4$, by the expressions of $k_2^c$ in every case above, we must be in case (a'') or (c') with $r=1/4$. In the first case  
that would imply $m<0$, which is absurd. In the case (c') we get a solution, with 
$r=1/4$, $m=3/4$, $p_+=(2n-1)+3/4$, $p_-=(2n-1)+1/4$. This is the odd rank case of Theorem \ref{prop:mainrho}(c).

Next we analyze the case $(p_++p_-)/4\leq k_2^c = k_4$. 
One reads off from the above list of cases that 
this implies that we are either in the case (i) and (d'), 
with $r=1/4$, $m=3/4$, $p_+=2n+3/4$, 
and $p_-=2n+1/4$ (here $d_2=1=\mult(k_4)$),  
which is the case of Proposition \ref{prop:mainrho} (c), 
or 
in case (ii) and (a'), with $r=1/4$ and $m=7/4$, which gives Proposition \ref{prop:mainrho} (e), or in case (ii) and (a''), with $r=3/4$ and $m=5/4$, 
which is case Proposition \ref{prop:mainrho} (d). In the latter two cases we have 
$d_2=2$, and $\mult(k_4)=2$ if $k_1^c<k_2^c$, 
while $\mult(k_4)=1=d_1$ if $k_1^c=k_2^c$. 

Observe that in the course of the proof, we also checked that if a pair $(m,\rho)$ 
does not belong to one of the cases listed in Proposition \ref{prop:mainrho} (a)--(e), 
then there exist $k\in\ZZ+1/2$ such that $k\geq(p_++p_-)/4$ and $\cycl(k)\neq 0$, the largest $k_{(m,\rho)}$ of which satisfies $\cycl(k_{(m,\rho)})>0$. This concludes the proof of all assertions.
\end{proof}

\subsection{Counting odd cyclotomic polynomials for the cases $\delta\in\{0,1\}$}\label{sec:countingdelta=0}

As we have explained in Sections \ref{sub:muSTM} and \ref{sub:redreal}, 
the statements about odd cyclotomic factors of $\mu_{0,\delta}^{\{\respt_\lambda\}}(\respt_\lambda)$ follow from the results of the previous paragraph by application of the extra-special STMs. We now translate the results to the cases $\delta\in\{0,1\}$ 
using the extra-special bijection of Section \ref{e-s}.

\begin{prop}\label{cor:delta01} 
Suppose that $\delta\in\{0,1\}$. Let $\respt$ be a real residual point of even degree, i.e.,~such that $\mu_{0,\delta}^{n, \{\respt_\lambda \} }(\respt_{\lambda})$ has no odd cyclotomic factors. 
Then $\respt$ is of the form $\respt=\respt_\lambda$ 
with $\lambda\vdash \delta+2n$ a partition with odd, distinct parts, 
where $(\delta,\lambda)$ is one of the following cases:
\begin{enumerate}
\item[(a)] Choose $m\in(\ZZ\pm1/4)_+$ and define $\kappa\in\ZZ_{\geq 0}$ and $\epsilon\in\{0,1\}$ by 
writing $m=\kappa+(2\epsilon-1)/4$. Define $\lambda=[1+2\epsilon,5+2\epsilon,\dots,4(\kappa-1)+1+2\epsilon]$, and  
$\delta\in\{0,1\}$ by $\kappa\equiv \delta\pmod{2}$. Define $n$ by $2n+\delta=2\kappa^2+(2\epsilon-1)\kappa$.
Then $\respt_\lambda$ represents a rank $0$ (extra-special) spectral transfer morphism (STM) $T_{1/4,m}^{0} \to T_{0,\delta}^n$. In particular, modulo powers of $q$ and nonzero rational constants, the residue $\mu_{0,\delta}^{n, \{\respt_\lambda\} }(\respt_\lambda)$ equals the case (v) or (vi) on the right-hand side of \eqref{eq:classicalfdeg}, and for all these cases $\mu_{0,\delta}^{n, \{ \respt_\lambda\} }(\respt_\lambda)$ indeed has no odd cyclotomic factors.

\item[(b)] Let $\delta=0$ and $r\in\ZZ_{\geq 0}$. Put $\lambda=[1,3,\dots,4r+1,4r+3]\vdash 2n$ with $n=2(r+1)^2$. Then $\respt_\lambda$ represents a rank $0$ STM $T_{0,m}^0 \to  T_{0,0}^n$ with $m=2(r+1)+\delta = 2r+2$, and 
$\mu_{0,0}^{n, \{\respt_\lambda\} } (\respt_\lambda) = (d^\DDD_a(q))^2$ with $a=r+1$.  In particular, for all these
cases $\mu_{0,0}^{n, \{\respt_\lambda \} } (\respt_\lambda)$ indeed has no odd cyclotomic factors. (We regard the empty diagram with $\delta=0$ as belonging to case (a).)

\item[(c)] Let $\delta=1$ and $r\in\ZZ_{\geq 0}$. Put $\lambda=[1,3,\dots,4r+3,4r+5]\vdash 2n+1$ with $n=2(r+1)(r+2)$. 
Then $\respt_\lambda$ represents a rank $0$ STM  $T_{0,m}^0 \to T_{0,1}^n$ with $m=2(r+1)+\delta=2r+3$, 
and $\mu_{0,1}^{n, \{\respt_\lambda\} }(\respt_\lambda)=(d^\BBB_b(q))^2$ with $b=r+1$. 
In particular, for all these cases $\mu_{0,1}^{n, \{\respt_\lambda\} } (\respt_\lambda)$ indeed has no odd cyclotomic factors.

\item[(d)] Let $\delta=1$ and $r\in\ZZ_{\geq 0}$. Put $n=8(r+1)^2-1$ and $\lambda=[3,5,7,\dots,8r+5,8r+7]\vdash 2n+1$. In this case, modulo powers of $v$ and nonzero rational 
constants, we have  
\begin{equation}
\mu_{0,1}^{n, \{\respt\}} (\respt)=\frac{1-q^{4r+4}}{1-q} \mu_{0,0}^{{n+1}, \{ \respt' \} } (\respt')=\frac{1-q^{4r+4}}{1-q}(d^\DDD_a(q))^2
\end{equation} 
where $\respt'=\respt_{\lambda'}$ with $\lambda'=[1,3,5,\dots,8r+7]$ (then 
$\respt'=(v,\respt)$), and $a=2(r+1)$. 
In particular, $\mu_{0,1}^{n, \{\respt\} } (\respt)$ has no odd cyclotomic factors if and only if $r+1=2^s$ 
for some $s \geq 0$.

\item[(e)] Let $\delta=0$ and $r\in\ZZ_{\geq 0}$. Put $n=8(r+1)^2+1$ and 
$\lambda=[1,3,5,\dots,8r+5,8r+9]\vdash 2n$. Then $\respt_\lambda=(\respt_{\lambda'},q^{4r+4})$, 
where $\lambda'=[1,3,5,\dots,8r+7]$. Since $\respt_{\lambda'}$ represents a translation STM, we see that $\respt$ is the image under the translation STM $T_{0,m}^1 \to T_{0,0}^n$ of the residual point $t=q^{4r+4}$, where $m=4(r+1)$. Consequently, modulo powers of $v$ and nonzero rational constants, we have:
\begin{equation}
\mu_{0,0}^{n, \{\respt\} }(\respt) = \frac{1-q^{4r+4}}{(1+q^{4r+4}) (1-q)}(d^\DDD_a(q))^2
\end{equation}
with $a=2(r+1)$. 
In particular, $\mu_{0,0}^{n, \{\respt\} }(\respt)$ has no odd cyclotomic factors if and only if $r+1=2^s$ for some $s\geq 0$.
\end{enumerate}
Together with the results of Proposition \ref{prop:mainrho}, the above results imply that in all cases, if a residue of the form $\mu_{0,\delta}^{n, \{\respt\} }(\respt)$ or of the form $\mu_{1/4, m}^{r, \{\respt\} }(\respt)$ contains odd cyclotomic polynomials, then the highest odd cyclotomic factor which has nonzero multiplicity in the residue in fact has \emph{positive} multiplicity.
\end{prop}

\begin{proof}
Parts (a)--(c) follow directly from the results of Proposition \ref{prop:mainrho} by application of the extra-special bijection and STMs. The expression for residues in terns of unipotent degrees follows from the relation between the parameters $\{a,b\}$ and $\{\emm, \emp \}$ and the defining properties of STMs. 

Part (d) is obtained by a direct computation, using that the residual point $\respt'$ represents 
a standard STM, and is constructed from $\respt$ by adding one coordinate component $1$.
Using this, it is an easy direct computation to check the formula given in (d). Part (e) follow from an application of a translation STM (as explained there).

The last assertion follows from the expressions given in parts (a)--(e).
\end{proof}
\begin{rem}
This finishes the proof of Proposition \ref{prop:oddred}, and thus of Proposition \ref{prop:odd}.
\end{rem}

\section{Proof of the main theorem}\label{sec:pfmain}
In this Section we will finally put everything together and prove the main result Theorem \ref{thm:A}.
In order to do so we need to analyse which residue points of the form 
$\respt_{(\lambda_-,\lambda_+)}=(\respt_{\lambda_-},\respt_{\lambda_+})$,  
with $\respt_{\lambda_\pm}$ real residue points of even degree as listed in 
Propositions \ref{cor:deltaishalf} and \ref{cor:delta01},  
yield actual solutions to \eqref{eq:fdegmu} for some cuspidal unipotent character 
$\pi$ of an inner form of $\mbf{G}$. 
Here we are using the notation 
$\respt_{(\lambda_-,\lambda_+)}=(\respt_{\lambda_-},\respt_{\lambda_+})$ 
with $\respt_{\lambda_\pm}:=\respt_{\fb,\delta_-;\lambda_-}$ 
as discussed in Section \ref{sub:muSTM}, where are assuming that the parameters 
$(\fb;\delta_-,\delta_+)$ in \eqref{eq:deg} correspond to 
a classical group $\mbf{G}$, so with $\delta_\pm\in\{0,1/2,1\}$
such that  $(\delta_-,\delta_+)\in\mathcal{V}^\mathcal{X}$, where  
$\mathcal{X}\in\{\textup{I},\textup{II},\textup{III},\textup{IV}\}$ (as defined below Eq.~\eqref{eq:sets}). 

In the course of this analysis we need to be able to compute the multiplicities of even cyclotomic polynomials in the residue 
\begin{equation}\label{eq:deg}
\mathfrak{f}_{\lambda_-,\lambda_+}:=
\mu_{\delta_-,\delta_+}^{n,\{\respt_{(\lambda_-,\lambda_+)}\}}(\respt_{(\lambda_-,\lambda_+)})
\end{equation} 

Let $\mbf{M}^e$ denote the subgroup of $\mbf{M}_0$ generated 
by $\QQ^\times$ and $\{v^j+v^{-j} \mid j \in\ZZ_+ \}$. 
For the distinguished unipotent partitions $\lambda_{\pm}$ as listed in 
Proposition \ref{cor:deltaishalf} (for $\delta_\pm=1/2$) or in Proposition \ref{cor:delta01} (for $\delta_\pm\in\{0,1\}$),
we have shown that $\mathfrak{f}_{\lambda_-,\lambda_+} \in \mbf{M}^e$. 

To proceed, we introduce some notations. 

\begin{definition}\label{def:mult}
For $f \in \mathbf{M}^e$, let 
$m_f^e:\ZZ_{+} \to\ZZ$ be the function defined by the following rule: Modulo 
the subgroup of $\mathbf{K}^\times$ generated by powers of $v$ and nonzero rational constants, we have 
\begin{equation}
f\equiv\prod_{a\in\ZZ_+}(1+q^a)^{m_f^e(a)}.
\end{equation}
We define
\begin{equation*}
\mult^{(\lambda_-,\lambda_+)}_{(\delta_-,\delta_+)} ( \cdot )=m^e_{\mathfrak{f}_{\lambda_-,\lambda_+}} (\cdot) +
\diracdelta_{1,\delta_-\delta_+}
\end{equation*}
where $\mathfrak{f}_{\lambda_-,\lambda_+}$ is given by \eqref{eq:deg}.
\end{definition}
\begin{rem}
The Kronecker delta $\diracdelta_{1,\delta_-\delta_+}$ has to be included to make the formulae in the analysis below uniform in $(\delta_-,\delta_+)$. 
The origin of this term is easily explained.
If $(\delta_-,\delta_+)=(1,1)$, then the normalization factor $\tau_{1,1}^0(1)=(v-v^{-1})^n\tau_{1,1}(1)$
of the $\mu$-function (cf.~\cite[(33)]{Opdl}) equals 
$d^{\tau,\DDD}_1=(v+v^{-1})^{-1}$, while in all other cases $\tau_{1,1}^0(1)$ is equal to $1$.  
This is due to the one-dimensional $K$-anisotropic factor 
in a maximally $K$-split $K$-torus in the case $(\delta_-,\delta_+)=(1,1)$.
This factor gives rise to an additional factor $(v+v^{-1})$ in the volume of Iwahori 
subgroup of $G$ in this case. 
\end{rem}

In general, let $\delta\in\{0,1/2,1\}$ and let $\lambda\vdash 2n+\lfloor \delta\rfloor$ be a distinguished unipotent partition of type 
$\delta$, i.e. $\lambda$ has distinct parts of parity type $1-2\delta$.
Let $\respt_{\fb,\delta;\lambda}$ be the corresponding residual point with positive coordinates. 

\begin{definition}\label{def:h} 
For $x\in\QQ$, 
let $\tih_{\fb; \lambda}(x)\in\ZZ_{\geq0}$ denote the number of coordinates of $\respt:=\respt_{\fb,\delta;\lambda}$ 
equal to $q^{ x}$ or $q^{-x}$. 
If $\fb=1$ then 
we will usually drop it from the notation and simply write $\tih_\lambda$. Define $\Delta_y(\tih_\lambda)(x)=2\tih_\lambda(x)-\tih_\lambda(x-y)-\tih_\lambda(x+y)$. If $y=1$ we will often drop it from the notation, and simply write $\Delta$.
\end{definition}

Note that $\tih_{\fb; \lambda}(x)$ is independent of the choice of $\respt$ in its $W_0$-orbit.
We have $\tih_{\fb;\lambda}(\fb x)=\tih_{1;\lambda}(x)$, and 
$\tih_{\fb;\lambda}$ is an \emph{even} function supported on $\fb (\ZZ+\delta)$. Observe that $\tih_{\fb; \lambda}(0)$ equals \emph{twice} the number of coordinates of $\respt$ which are equal to $1$. \\

By an easy direct computations one sees that if $\delta\in\{0,1\}$ then, 
modulo powers of $q$ and nonzero rational constant factors, we have
\begin{equation}\label{eq:reldelta01}
\mu_{1,\delta}^{n, \{\respt\} } (\respt)=\prod_{x\in\ZZ_+}(1+q^x)^{\Delta_1(h_\lambda)(x)} \mu_{0,\delta}^{n, \{\respt\} } (\respt).
\end{equation}
This can be translated using notations in Definition \ref{def:h} (recall that $\delta$ also denotes the unique partition of 
$\delta\in\{0,1\}$)
\begin{equation}\label{eq:reldelta01add}
\mult^{(\delta_-,\lambda)}_{(\delta_-,\delta)} (\cdot )=\mult^{(0,\lambda)}_{(0,\delta)} (\cdot) +
\Delta_{\delta_-}(\tih_\lambda)(x).
\end{equation}

To give an explicit expression of $\mult_{(0, \delta)}^{(0, \lambda)}$, we introduce more notations. Recall the algebra of functions defined on $\QQ$ with finite support, with multiplication given by the Cauchy convolution $m_1*m_2(z)=\sum_{x+y=z}m_1(x)m_2(y)$. This is a commutative, unital associative algebra. We use the Dirac delta $\diracdelta_x$ to denote the basis element corresponding to $x\in\QQ$, so that $\diracdelta_0$ is the identity of convolution\footnote{Not to be confused with $\dem, \dep$ and $\delta$ indicating the parity type of a partition, which are not written upright. We apologize for this choice of notation.}. If $T_{x}m$ is the left translation of $m$, i.e.~$(T_{x}m)(y)=m(y-x)$, then $T_x m=\diracdelta_x*m$. We have the obvious rules $\Delta(m_1*m_2)=\Delta(m_1)*m_2=m_1*\Delta(m_2)$, also $\Delta(m)=\Delta(\diracdelta_0)*m$. 

\begin{prop}\label{prop:degwithdeltamin1}
Using the notations of Proposition \ref{cor:delta01}, we have the followings: 
\begin{enumerate}
\item[(a)] Let $m\in(\ZZ\pm 1/4)_+$, written as usual as $m=\kappa+(2\epsilon-1)/4$. Let $\delta\in\{0,1\}$ be the 
parity of $\kappa$. Put $p=2(\kappa-1)+\epsilon$, the largest jump in the jump sequence of $\lambda$. Thus 
$\lambda=\lambda_p$ and $\delta$ are both determined by $p$, and for $k\in\ZZ$ we have:
\begin{equation*}
\tih_\lambda(k)=\left\{
\begin{array}{ll}
2\lfloor (p+2)/4\rfloor &\text{\ if\ }k=0,\\
\max\{0,\lfloor (p+2-|k|)/2\rfloor\} &\text{\ else.\ }
\end{array}
\right.
\end{equation*}
We have $\mult^{(0,\lambda)}_{(0,\delta)}=-\tih_\lambda$, and 
\begin{equation*}
\Delta(\tih_\lambda)+\Delta_\delta(\diracdelta_0) + S_{p+1} = \diracdelta_0,
\end{equation*}
where 
\begin{equation*}
S_{p+1}(k)=\left\{
\begin{array}{ll}
(-1)^{p+1-k}&\text{\ if\ } |k|\leq p+1,\\
0&\text{\ else.\ }
\end{array}
\right.
\end{equation*}

\item[(b),(c)] The cases (b) ($\delta=0$) and (c) ($\delta=1$) can be treated uniformly. Let $p=2r+1+\delta$,  thus 
$\lambda=\lambda_p$ and $\delta$ are both determined by $p$, and for $k\in\ZZ$ we have:
\begin{equation*}
\tih_\lambda(k)=\left\{
\begin{array}{ll}
2\lfloor (p+1-\delta)/2\rfloor=p+1-\delta &\text{\ if\ }k=0,\\
\max\{0,p+1-|k|\}&\text{\ else.\ }
\end{array}
\right.
\end{equation*}
We have $\mult^{(0,\lambda)}_{(0,\delta)}=-2\tih_\lambda$, and 
\begin{equation*}
\Delta(\tih_\lambda)+\Delta_\delta(\diracdelta_0)=\Delta_{(p+1)}(\diracdelta_0)=-\diracdelta_{p+1}-\delta_{-p-1}+2\diracdelta_0.
\end{equation*}

\item[(d)] 
In this case $\delta=1$ and $p=2a+1$, with $a=2(r+1)$ a power of $2$. Hence 
$\lambda=\lambda_p$ is determined by $p$, and for $k\in\ZZ$ we have:
\begin{equation*}
\tih_\lambda(k)=\left\{
\begin{array}{ll}
p-1 &\text{\ if\ }k=0,\\
\max\{0,p+1-|k|\}&\text{\ else.\ }
\end{array}
\right.
\end{equation*}
We have $\mult^{(0,\lambda)}_{(0,\delta)}=-2\tih_\lambda+\diracdelta_1+\diracdelta_2+\diracdelta_4+\ldots+\diracdelta_a$, and 
\begin{equation*}
\Delta(\tih_\lambda)+\Delta_\delta(\diracdelta_0)=-\diracdelta_{p+1}-\diracdelta_{-p-1}+\diracdelta_{-1}+\diracdelta_{+1}.
\end{equation*}

\item[(e)] 
In this case $\delta=0$ and $p=2a$, with $a=2(r+1)$ a power of $2$. Hence 
$\lambda=\lambda_p$ is determined by $p$, and for $k\in\ZZ$ we have:
\begin{equation*}
\tih_\lambda(k)=\left\{
\begin{array}{ll}
p &\text{\ if\ }k=0,\\
\max\{0,p-|k|\}&\text{\ if\ }0<|k|<p,\\
1&\text{\ if\ }|k|=p,\\
0&\text{\ else.\ }
\end{array}
\right.
\end{equation*}
We have $\mult^{(0,\lambda)}_{(0,\delta)}=-2\tih_\lambda+\diracdelta_1+\diracdelta_2+\diracdelta_4+\ldots+\diracdelta_p$, and 
\begin{equation*}
\Delta(\tih_\lambda)=(-\diracdelta_{p+1}+\diracdelta_{p}-\diracdelta_{p-1})+(-\diracdelta_{-p+1}+\diracdelta_{-p}-\diracdelta_{-p-1})+2\diracdelta_{0}.
\end{equation*}
\end{enumerate}
\end{prop}

\begin{proof} 
We use the rules (cf.~\cite[Proposition 6.6]{OpdSol})   
describing a linear residual point $\xi$ for $\BBB_n$ with 
parameter $\delta\in\{0,1\}$ and $\mathsf{k}=\log q$,
in terms of a distinguished $\delta$-partition $\lambda$. 
The number $m_0$ of coordinates equal to $0$ is equal to $\lfloor (m_1+1-\delta)/2\rfloor$, where 
$m_1$ is the number of coordinates equal to $\pm k$. In terms of $\tih_\lambda$, this means that $\tih_\lambda(0)=\lfloor (\tih_\lambda(1)+1-\delta)/2\rfloor$. 
For positive integers $n$, $\tih_\lambda(n)$ equals the number of \emph{jumps} in the jump sequence $\jmath=(\lambda-\mathbf{1})/2$ which are larger than or equal to $n$. 
Since we extended the function $\tih_\lambda$ as an even function, this shows that $\tih_\lambda$ is 
determined by $(\delta,\lambda)$ and can be read of simply from the information provided in Proposition \ref{cor:delta01}. 
\end{proof}

A general residual point is of the form  
$\respt=(-\respt_{(\fb,\delta_-;\lambda_-)},\respt_{(\fb,\delta_+;\lambda_+)})$. Combining the results of Proposition \ref{cor:deltaishalf} (for $\delta_\pm=1/2$), Proposition \ref{cor:delta01} (for $\delta_\pm\in\{0,1\}$) and Proposition \ref{prop:degwithdeltamin1}, 
the ``residue'' $\mu_{\dem, \dep}^{n, \{\respt\} } (\respt)$ or equivalently the 
multiplicity $\mult^{(\lambda_-,\lambda_+)}_{(\fb;\delta_-,\delta_+)}$ is easy to determine if one of $\lambda_\pm$  
is the unique partition $\delta_\pm$ of $\delta_\pm$. 
When both $\lambda_{\pm}\not=\delta_\pm$ we also 
need to take into account the ``mixed" factors of this residue by looking at the expression 
\eqref{eq:goodwill}. 
These ``mixed'' factors arise from roots of the type $t_i^{\pm 1} t_j^{\pm 1}$ with $t_i^{\pm 1}$ 
comes from $-\respt_{(\fb,\delta_-;\lambda_-)}$ and $t_j^{\pm 1}$ comes from $\respt_{(\fb,\delta_+;\lambda_+)})$. In general, we have the following
\begin{prop}\label{prop:mixed} 
Suppose that $\fb\in\{1,2\}$. For all $k\in(\ZZ/\fb)_+$ we have: 
\begin{equation*}
\mult^{(\lambda_-,\lambda_+)}_{(\fb;\delta_-,\delta_+)}(\fb k)=\Delta(\tih_{\lambda_-}*\tih_{\lambda_+})(k)+
\mult^{(0,\lambda_+)}_{(\fb;\delta_-,\delta_+)}(\fb k)+\mult^{(0,\lambda_-)}_{(\fb;\delta_+,\delta_-)}(\fb k)
\end{equation*}

It is also easy to see that we always have: 
\begin{equation*}
\mult^{(0,\lambda_\pm)}_{(\fb;\delta_\mp,\delta_\pm)}(\fb k)
=(\Delta_{\delta_\mp}(\delta_0)*\tih_{\lambda_\pm})(k)+\mult^{(0,\lambda_\pm)}_{(\fb;0,\delta_\pm)}(\fb k)
\end{equation*}
\end{prop}
Combined with the results of Propositions \ref{cor:deltaishalf}, \ref{cor:delta01} and \ref{prop:degwithdeltamin1},
this formula enables us to express $\mult^{(0,\lambda_\pm)}_{(\fb;\delta_\mp,\delta_\pm)}(\fb k)$ 
in terms of $\tih_{\lambda_+}$ and $\tih_{\lambda_-}$.\\

We finally have enough tools to prove the main theorem, case by case. 

\subsection{Proof of the main Theorem for odd orthogonal groups}\label{sub:oddorth}
We consider $G=\mbf{SO}_{2n+1}$. This is of parameter type II, and this is by far the simplest case. 

\begin{proof}[Proof of Theorem \ref{thm:A} for $\mbf{SO}_{2n+1}$]
We have $\fdeg(\sigma, q)=d_a^{\DDD}(q) d_b^{\BBB}(q)$ (see \eqref{eq:classicalfdeg}), and a glance at the tables in \cite[\S 13.7]{Carter} readily shows that the multiplicity functions $m_{a,b}^e$ for  $d_{a,b}:=d_a^{\DDD}(q) d_b^{\BBB}(q)$ is:
\begin{equation}
m_{a,b}^e(k)=-(\max\{0,2a-k\}+\max\{0,2b+1-k\}), \quad k \in\ZZ_+.
\end{equation}
On the other hand, we have $\delta_\pm=1/2$ in this case, and $\fb=1$.
Proposition \ref{cor:deltaishalf} implies that $\lambda_\pm=[2,4,\dots,2r_\pm]$, 
and that the corresponding multiplicity functions $\tih_\pm$ are supported on 
$\ZZ+1/2$ and are given by $h_\pm(x)=\max\{0,p_\pm+1-|x|\}$ where $p_\pm=r_\pm-1/2=m_\pm-1$. 

\begin{rem}
We remind the reader that from here onwards we are using the notation $\lambda_\pm$, $m_\pm$, $p_\pm$ etc.~in the sense of Sections \ref{sec:DULLPCG} and \ref{sub:muSTM}, 
but \emph{not} as in Section \ref{sub:oddextra}. 
\end{rem}

Applying Lemma \ref{lem:deltahalf}, Propositions \ref{prop:degwithdeltamin1} and \ref{cor:deltaishalf} ,we have (with $\tih_\pm:=\tih_{\lambda_\pm}$ and $k\in\ZZ_+$):
\begin{equation*}
\begin{aligned}
&\mult^{(\lambda_-,\lambda_+)}_{(1/2,1/2)}(k)\\
=&\Delta(\tih_-*\tih_+)(k)
-\tih_-(k-1/2)-\tih_-(k+1/2)-\tih_+(k-1/2)-\tih_+(k+1/2)\\
=&(\Delta(\tih_-)*\tih_+)(k)
-\tih_-(k-1/2)-\tih_-(k+1/2)-\tih_+(k-1/2)-\tih_+(k+1/2)\\
=&-T_{-(p_-+1)}\tih_+-T_{+(p_-+1)}\tih_+
-\tih_-(k-1/2)-\tih_-(k+1/2)\\
=&-(\max\{0,p_-+p_++2-k\}+\max\{0,|p_+-p_-|-k\})
\end{aligned}
\end{equation*}
so that necessarily 
$\{2a,2b+1\}=\{p_++p_-+2,|p_+-p_-|\}=\{m_-+m_+,|m_+-m_-|\}$, as was to be proved (compare with \eqref{eq:sets}, and using the STM 
$\phi_T: T_r\to T_{r+m_+-1/2}$).
\end{proof}

It is instructive to carefully analyze the last step in the above computation.
If $k$ is sufficiently large then $m_{a,b}^e(k)$ and 
$\mult^{(\lambda_-,\lambda_+)}_{(1/2,1/2)}(k)$ are zero. 
Decreasing from $k$ to $1$ we meet three remarkable arguments 
of $\mult^{(\lambda_-,\lambda_+)}_{(1/2,1/2)}$, 
namely $k_1=p_-+p_++1$ (this is the first argument 
where $\mult^{(\lambda_-,\lambda_+)}_{(1/2,1/2)}(k) \neq 0$), 
$k_2=p_-+1/2$ (this is the first argument after $T_{p_-+1}\tih_+(k)$ reached its maximum) 
and $k_3=|p_+-p_-|-1$ (this is the argument where either $T_{-(p_-+1)}\tih_+$ starts increasing (if $p_-<p_+$) 
or where $T_{(p_-+1)}\tih_+$ stops decreasing (if $p_-> p_+$)). 
Now compare to $m_{a,b}^e(k)$.
We see that $k_1=\max\{2a-1,2b\}$. 
At $k_2$, we see that $-\tih_-(k-1/2)-\tih_-(k+1/2)$ exactly repairs the 
linear behaviour of $T_{-(p_-+1)}\tih_+(k)$ at $k_2<k\leq k_1$. 
Finally, for $k\leq k_3$ we obtain (in either case) an additional linear contribution, implying that $k_3=\min \{2a-1,2b\}$.

\subsection{Proof of the main Theorem for unitary groups}\label{sub:unit}

In this subsection we consider the case $G=\mbf{SU}_{2n}$ or 
$G=\mbf{SU}_{2n+1}$, which are in type $\textup{I}$. 

\begin{proof}[Proof of Theorem \ref{thm:A} for unitary groups]

We now have $\fdeg(\sigma, q)=d_s^{\{ {}^2\AAA \}}(q) d_t^{\{ {}^2\AAA \}}(q):=d_{s,t}$ (see \eqref{eq:classicalfdeg}). A glance at the tables in \cite[\S 13.7]{Carter} readily shows that the multiplicity function $m_{s,t}^e$ for $d_{s,t}$ is supported 
on odd integers, and is of the form:
\begin{equation}
m_{s,t}^e(2k)=-(\max\{0,s+1/2-k\}+\max\{0,t+1/2-k\}), \quad k \in(\ZZ+1/2)_+.
\end{equation}
On the other hand, we have $\delta_-=1/2$ and $\delta_+\in\{0,1\}$ in this case, 
and $\fb=2$, so 
Proposition \ref{cor:deltaishalf} implies that $\lambda_-=[2,4,\dots,2r_-]$, and 
$\lambda_+$ is as described in Proposition \ref{prop:degwithdeltamin1}. 
The corresponding multiplicity function $\tih_-$ is supported on $\ZZ+1/2$, and 
$\tih_+$ is supported on $\ZZ$. 
We know we can write $h_-(x)=\max\{0,p_-+1-|x|\}$ where $p_-=\emm -1$ and 
$\emm=r_-+1/2$, 
but at this point there are still various choices for $h_+=h_{\lambda_+}$ possible.
Applying Propositions \ref{prop:degwithdeltamin1} and \ref{cor:deltaishalf}, 
we have (with $\tih_\pm:=\tih_{\lambda_\pm}$ and $k\in(\ZZ/2)_+$):
\begin{equation*}\begin{aligned}
&\mult^{(\lambda_-,\lambda_+)}_{2;(1/2,\delta)}(2k)\\
=&\Delta(\tih_-*\tih_+)(k)
-2\tih_-(k)+\Delta_{\delta_+}(\tih_-)(k)+\mult^{(0,\lambda_+)}_{2;(1/2,\delta_+)}(2k)\\
=&\big( (\Delta(\tih_-)+\Delta_{1/2}(\diracdelta_0)\big)*\tih_+)(k)
-2\tih_-(k)+\Delta_{\delta_+}(\tih_-)(k)+\mult^{(0,\lambda_+)}_{2;(0,\delta_+)}(2k)\\
=&-T_{(p_-+1)}\tih_+(k)-T_{-(p_-+1)}\tih_+(k)
-2\tih_-(k)+\Delta_{\delta_+}(\tih_-)(k)+2\tih_+(k)+\mult^{(0,\lambda_+)}_{(0,\delta_+)}(k).
\end{aligned}\end{equation*}
Observe that $m_{s,t}^e(2k) \neq 0 \Rightarrow k\in\ZZ+1/2$. This implies that for all $k\in\ZZ$ 
we must have $2\tih_+(k)+\mult^{(0,\lambda_+)}_{(0,\delta_+)}(k)=0$, since all other terms in the expression are obviously equal to zero for $k\in\ZZ$. 
Checking the cases in Proposition \ref{prop:degwithdeltamin1} 
we see that this happens if and only if $\lambda_+$ 
is of the form described in cases (b) or (c).  
Hence we have $\lambda_+=[1,3,\dots,1+2p_+]$ with $p_+=m_+-1$ 
and $m_+=2r_++2+\delta_+$, and 
\begin{equation*}
\mult^{(\lambda_-,\lambda_+)}_{2;(1/2,\delta)}(2k)=
-T_{(p_-+1)}\tih_+(k)-T_{-(p_-+1)}\tih_+(k)
-2\tih_-(k)+\Delta_{\delta_+}(\tih_-)(k)
\end{equation*}
From here on the reasoning is completely analogous to the odd orthogonal case Section \ref{sub:oddorth}.
We see that necessarily 
$\{s-1/2,t-1/2\}=\{p_++p_-+1,|p_+-p_-|-1\}$ or $\{s+1/2,t+1/2\}=\{m_-+m_+,|m_+-m_-|\}$, as was to be proved (compare with \eqref{eq:sets}, and with Section \ref{sub:muSTM}). 
\end{proof}

\subsection{Proof of the main Theorem for symplectic and even orthogonal groups}\label{subsec:noteasy}

In this subsection we treat the cases that $\mbf{G}=\mathbf{Sp}_{2n}$, $\mathbf{SO}_{2n}$ or $\mathbf{SO}^*_{2n+2}$, 
which are of type \textup{III} (the first group) or of type \textup{IV} (the last two groups). \\

Before we start, let us recall the smallest unipotent partitions $\lambda$ which 
belong to the cases (a)--(e) of Corollary \ref{cor:delta01} and Proposition \ref{prop:degwithdeltamin1}. We always denote 
by $p$ the largest element of the support of the function $\tih_\lambda$, and we formally extend this 
to cases where $\lambda=\delta$ is the unique partition of $\delta$ (compatible with our formulae for residues). If $\delta=0$ and $\lambda=0$ then we put $p=-1$, and if $\delta=1$ and $\lambda=1$ we put $p=0$. 
These can be viewed as in case (a), or (b) (if $\delta=0$) or (c) (if $\delta=1$) respectively.  
If $p=1$ then $\lambda=[3]$, and this is a supercuspidal partition belonging to case (a), or $\lambda=[1,3]$, which is case (b). 
If $p=2$ then $\lambda=[1,5]$, which belongs to case (a), or $\lambda=[1,3,5]$, which belongs to case (c).
The smallest partition in case (d) is $[3,5,7]$ (with $p=3$), and the smallest partition in case (e) is $[1,3,5,9]$ (with $p=4$).

\begin{proof}[Proof of Theorem \ref{thm:A}]

Now the formal degree is of the form
\begin{equation*}
\fdeg(\sigma, q)=d_a^{\BBB}(q) d_b^{\BBB}(q) \text{ or } \fdeg(\sigma, q)=d_a^{\BBB}(q^2)d_b^{ \{ {}^2\AAA\}}(q)
\end{equation*}
for symplectic groups, or of the form 
\begin{equation*} 
\fdeg(\sigma, q)=d_a^{\DDD}(q) d_b^{\DDD}(q) \text{ or } \fdeg(\sigma, q)=d_b^{\DDD}(q^2)d_a^{ \{ {}^2\AAA \}}(q)
\end{equation*} 
for even orthogonal groups. A glance at the tables 
in \cite[\S 13.7]{Carter} readily shows that the multiplicity function $m_{a,b}^{e,ord,\textup{III}}$ for $d_{a,b}^{ord,\textup{III}}:=d_a^{\BBB}(q) d_b^{\BBB}(q)$
is of the form:
\begin{equation}
m_{a,b}^{e,ord,\textup{III}}(k)=-(\max\{0,2a+1-k\}+\max\{0,2b+1-k\}),\quad k \in\ZZ_+,
\end{equation}
while $m_{a,b}^{e,extra,\textup{III}}$ for $d_{a,b}^{extra,\textup{III}}:=d_a^{\BBB}(q^2) d_b^{\{ {}^2\AAA \}}(q)$ is of the form 
\begin{equation}
m_{a,b}^{extra,\textup{III}}(k)=-(\textup{max}_{int}\{0,(4a+2-k)/2\}+\textup{max}_{int}\{0,(2b+1-k)/2\}), \; k \in\ZZ_+,
\end{equation}
where $\textup{max}_{int}$ is {\bf{defined}} as the \emph{largest integer} of a finite set of rational numbers, or $0$ if the set contains no integer.

Similarly, for the parameter type $\textup{IV}$, the multiplicity function $m_{a,b}^{e,ord,\textup{IV}}$ for $d_{a,b}^{ord,\textup{IV}}:=d_a^{\DDD}(q) d_b^{\DDD}(q)$ is of the form:
\begin{equation}
m_{a,b}^{e,ord,\textup{IV}}(k)=-(\max\{0,2a-k\}+\max\{0,2b-k\}), \quad k \in\ZZ_+,
\end{equation}
while $m_{a,b}^{e,extra,\textup{IV}}$ for  $d_{a,b}^{extra,\textup{IV}}:=d_a^{\DDD}(q^2) d_b^{\{ {}^2\AAA \}}(q)$ is of the form 
\begin{equation}
m_{a,b}^{extra,\textup{IV}}(k)=-(\textup{max}_{int}\{0,(4a-k)/2\}+\textup{max}_{int}\{0,(2b+1-k)/2\}),\; k \in\ZZ_+.
\end{equation}
If $\respt=(-\respt_{\lambda_-},\respt_{\lambda_+})$ is a solution of \eqref{eq:fdegmu} then 
$\lambda_\pm$ must both belong to the cases listed in Proposition \ref{cor:delta01}, 
with $\delta_\pm$ determined by $G$. Using Propositions \ref{prop:degwithdeltamin1} and \ref{prop:mixed}, we will compute the multiplicity function 
$\mult^{(\lambda_-,\lambda_+)}_{(\delta_-,\delta_+)}(k)$ where $\lambda_\pm$ belongs to one of these  
cases (a)--(e) as listed in Proposition \ref{cor:delta01}, to study whether such combinations could provide solutions of equation \eqref{eq:fdegmu}. \\

(1) Let us first assume that $\lambda_\pm$ both belong to case (b) (if $\delta_\pm=0$) or to case (c) (if $\delta_\pm=1$). 
Note that it is strictly speaking not necessary to treat these cases, since we already know that they all represent standard STMs. Yet it is useful and instructive to do the calculations.

In the present range of parameters we need to compensate for the added factor 
$\delta_-\delta_+\diracdelta_1(k)$ 
in the multiplicity function $\mult^{(\lambda_-,\lambda_+)}_{(\delta_-,\delta_+)}(k)$
(see definition \ref{def:mult}).

Thus we have, using Proposition \ref{prop:degwithdeltamin1},
and analogous to the odd orthogonal cases, for all $k\in\ZZ_+$: 
\begin{align*}
&\mult^{(\lambda_-,\lambda_+)}_{(\delta_-,\delta_+)}(k)-\delta_-\delta_+\diracdelta_1(k)\\
=&\Delta(\tih_-*\tih_+)(k)
-2\tih_-(k)+\Delta_{\delta_+}(\tih_-)(k)
-2\tih_+(k)+\Delta_{\delta_-}(\tih_+)(k)
-\delta_-\delta_+\diracdelta_1(k)\\
=&\Delta(\tih_-*\tih_+)(k)
-2\tih_-(k)+\Delta_{\delta_+}(\tih_-)(k)
-2\tih_+(k)+(\Delta_{\delta_-}(\delta_0)*\tih_+)(k)
-\delta_-\delta_+\diracdelta_1(k)\\
=& \big( (\Delta(\tih_-)+\Delta_{\delta_-}(\delta_0) )*\tih_+ \big)(k)
-2\tih_+(k)-2\tih_-(k)+\Delta_{\delta_+}(\tih_-)(k)
-\delta_-\delta_+\diracdelta_1(k)\\
=&-T_{-(p_-+1)}\tih_+(k)-T_{+(p_-+1)}\tih_+(k)
-2\tih_-(k)-\delta_+\diracdelta_{(p_-+1)}(k)+\delta_+\delta_-\diracdelta_1(k)
-\delta_-\delta_+\diracdelta_1(k)\\
=&-T_{-(p_-+1)}\tih_+(k)-T_{+(p_-+1)}\tih_+(k)
-2\tih_-(k)-\delta_+\diracdelta_{(p_-+1)}(k)\\
=&-(\max\{0,p_-+p_++2-k\}+\max\{0,|p_+-p_-|-k\})
\end{align*}
so that this could be a solution of \eqref{eq:fdegmu}, but clearly only if the right-hand side 
is of the form $m_{a,b}^{e,ord,\textup{III}}(k)$ (if the parities of $p_-$ and $p_+$ are distinct)
or of the form $m_{a,b}^{e,ord,\textup{IV}}(k)$ (if the parities of $p_-$ and $p_+$ are equal).
In the first case we see that $\respt$ is a solution of \eqref{eq:fdegmu} if and only if 
$\{2a+1,2b+1\}=\{p_++p_-+2,|p_+-p_-|\}=\{m_-+m_+,|m_+-m_-|\}$, 
and in the second case if and only if
$\{2a,2b\}=\{p_++p_-+2,|p_+-p_-|\}=\{m_-+m_+,|m_+-m_-|\}$.
These solutions correspond exactly to the ordinary standard translation STMs
\cite[3.2.6]{Opdl}
 (compare with \eqref{eq:sets}, and with Section \ref{sub:muSTM}). \\

(2) The second case we will consider is that of the cuspidal extra-special STMs, that is, when both $\lambda_\pm$ belong to case (a).
(Again, strictly speaking this is not necessary, since all cases are known to represent cuspidal extra-special STMs.)
Using Propositions \ref{prop:degwithdeltamin1} and \ref{prop:mixed}, we compute, for $k\in\ZZ_+$:
\begin{align*}
&\mult^{(\lambda_-,\lambda_+)}_{(\delta_-,\delta_+)}(k)-\delta_-\delta_+\diracdelta_1(k)\\
=&\Delta(\tih_-*\tih_+)(k)
-\tih_-(k)+\Delta_{\delta_+}(\tih_-)(k)
-\tih_+(k)+\Delta_{\delta_-}(\tih_+)(k)
-\delta_-\delta_+\diracdelta_1(k)\\
=&(\Delta(\tih_-)*\tih_+)(k)
-\tih_-(k)+\Delta_{\delta_+}(\tih_-)(k)
-\tih_+(k)+(\Delta_{\delta_-}(\diracdelta_0)*\tih_+)(k)
-\delta_-\delta_+\diracdelta_1(k)\\
=&-(S_{p_-+1}*\tih_+)(k)
-\tih_-(k)+\delta_+(-S_{p_-+1}-\Delta_{\delta_-}(\diracdelta_0)+\diracdelta_0)(k)
-\delta_-\delta_+\diracdelta_1(k)\\
=&-(S_{p_-+1}*\tih_+)(k)
-\tih_-(k)+\delta_+(-S_{p_-+1}+\delta_-\diracdelta_1)(k)
-\delta_-\delta_+\diracdelta_1(k)\\
=&-(S_{p_-+1}*\tih_+)(k)
-\tih_-(k)-\delta_+S_{p_-+1}(k)
\end{align*}
Now observe that there exists a unique symmetric function $\tiH_+$ supported 
on $\ZZ+1/2$ and a remainder $R_{m_+}$ 
such that we have $\tih_+=(\diracdelta_{-1/2}+\diracdelta_{+1/2})*\tiH_++R_{\emp}\diracdelta_0$.
One checks easily that in fact $R_{\emp}=(-1)^{\epsilon_+}2(\lfloor \frac{p_++2}{4}\rfloor+\epsilon_+\delta_+)$
and $\tiH_+(k)=\textup{max}_{int}\{0,(2p_+-2|k|+3)/4\}$. 
We may continue the computation as follows:
\begin{align*}
&\mult^{(\lambda_-,\lambda_+)}_{(\delta_-,\delta_+)}(k)-\delta_-\delta_+\diracdelta_1(k)\\
=&-(S_{p_-+1}*\tih_+)(k)
-\tih_-(k)-\delta_+S_{p_-+1}(k)\\
=&-S_{p_-+1}*(\diracdelta_{-1/2}+\diracdelta_{+1/2})*\tiH_+(k)-R_{m_+}S_{p_-+1}(k)
-\tih_-(k)-\delta_+S_{p_-+1}(k)\\
=&-T_{p_-+3/2}\tiH_+(k)-T_{-(p_-+3/2)}\tiH_+(k)-(R_{m_+}+\delta_+)S_{p_-+1}(k)
-\tih_-(k)\\
=&-T_{p_-+3/2}\tiH_+(k)-T_{-(p_-+3/2)}\tiH_+(k)-(-1)^{\epsilon_+}2(\lfloor \frac{p_++2}{4}\rfloor+\delta_+)S_{p_-+1}(k)
-\tih_-(k)\\
=&\textup{max}_{int}\{0,(p_-+p_++3-k)/2\}+\textup{max}_{int}\{0,(|p_--p_+|-k)/2\}
\end{align*}
where the last line follows from carefully checking what happens at $k=p_-+2,\,p_-+1,\,p_-,\dots$ and at $|p_++p_-|-1,\,|p_++p_-|-2,\dots$ 
at the various congruence classes of $p_+$ modulo $4$.
Now use that in case (a), we have $p_\pm=2(\kappa_\pm-1)+\epsilon=2m_\pm-3/2$. Hence we see that 
$\respt$ is a solution of \eqref{eq:fdegmu} if and only if the right-hand side is of type $d_{a,b}^{extra,\textup{III}}$, in which case 
we need that $\{m_-+m_+, |m_--m_+|\}=\{2a+1,b+1/2\}$, or of type $d_{a,b}^{extra,\textup{IV}}$, in which case 
we need $\{m_-+m_+, |m_--m_+|\}=\{2a,b+1/2\}$. These are indeed the residual points representing the supercuspidal extra-special STMs.\\

In fact, these are the only possibilities. In what follows we will see that there is no other solution for \eqref{eq:fdegmu}.\\
 
(3) Let us now try to find solutions of \eqref{eq:fdegmu} with $\lambda_-$ is of type (b) or (c), and $\lambda_+$ of type (a).
We allow any pair $(\delta_-,\delta_+)\in\{0,1\}^2$ here. Similar to the previous computations, we have:
\begin{align*}
&\mult^{(\lambda_-,\lambda_+)}_{(\delta_-,\delta_+)}(k)-\delta_-\delta_+\diracdelta_1(k)\\
=&\Delta(\tih_-*\tih_+)(k)
-2\tih_-(k)+\Delta_{\delta_+}(\tih_-)(k)
-\tih_+(k)+\Delta_{\delta_-}(\tih_+)(k)
-\delta_-\delta_+\diracdelta_1(k)\\
=&((\Delta(\tih_-)+\Delta_{\delta_-}(\diracdelta_0))*\tih_+)(k)
-2\tih_-(k)+\Delta_{\delta_+}(\tih_-)(k)
-\tih_+(k)
-\delta_-\delta_+\diracdelta_1(k)\\
=&-T_{-(p_-+1)}\tih_+(k)-T_{p_-+1}\tih_+(k)
+\tih_+(k)
-2\tih_-(k)+\Delta_{\delta_+}(\tih_-)(k)
-\delta_-\delta_+\diracdelta_1(k)\\
=&-T_{p_-+1}\tih_+(k)+\tih_+(k)-2\tih_-(k)-T_{-(p_-+1)}\tih_+(k)
-\delta_+\diracdelta_{p_-+1}(k)
\end{align*}

If both $p_-,p_+\geq 1$ then the above expression has value $-1$ at $k=p_-+p_++1$ and value $-1$ at 
$k=p_-+p_+$, and from the form of the available cuspidal formal degrees we easily conclude that if 
$\respt$ solves \eqref{eq:fdegmu} in this case, then the right-hand side has to be of the form $d_{a,b}^{extra,\textup{III}}$,
or $d_{a,b}^{extra,\textup{IV}}$ where $a$ and $b$ are such that one of 
the corresponding extra-special parameters $(m_-^e,m_+^e)$ equals $1/4$. Indeed, if $\lambda_-^e,\lambda_+^e$ 
are the unipotent partitions of type (a) such that the image of the corresponding 
extra-special STM $\respt_{(\delta_-,\delta_+)}^{(\lambda^e_-,\lambda^e_+)}$ also 
solves of \eqref{eq:fdegmu} (with the same right-hand side),  
then the values of the two highest multiplicities we just discussed, imply 
that $p^e_++p^e_-+1$ and $|p^e_+-p^e_-|-2$ differ by one (see the formula for the multiplicities of the residue we derived 
in the case where $\lambda_-$ and $\lambda_+$ are both type (a)), from which we conclude that 
one of $p^e_\pm$ equals $-1$, or equivalently $m^e_\pm=1/4$. Let us say that $p_\pm^e=-1$, and then \textbf{define} $p_{max}^e=p_\mp^e$.  
Since $p_- + p_+ + 1 =p_-^e + p_+^e + 1$ we have $p_{max}^e=p_-+p_++1$.
On the other hand, looking at above formula, it is clear that 
for all $k> p_-+1$, the term $\tih_+(k)-2\tih_-(k)-\dep \diracdelta_{p_-+1}(k)-T_{-(p_-+1)}\tih_+(k)$ has to vanish, 
while at $k = p_- + 1$ this expression needs to be $0$,  $-1$, or $-2$ (depending on the congruence class of 
$p_+$ modulo $4$), and at $k=p_-$ it always needs to be $-1$
(we invite the reader to draw the graph of $T_{p_-+1}$ for $k=p_-,\,p_-+1,\,p_-+2$ and compare this 
with the graph of $-h_{\lambda_+^e}$ to see this). 
It follows that $p_-+1=p_+$ or $p_-=p_+$. 

In the first case we have $p_{max}^e=p_-+p_++1=2p_+$.
Now we compute the rank $n$ in terms of $(p_-,p_+)$ and of $(p^e_-,p^e_+)$. The first method gives $n=n_-+n_+$ with 
\begin{equation*}
n_-= \begin{cases}
m_-^2/2=(p_-+1)^2/2, & \text { type (b) } \\
(m_--1)(m_-+1)/2=p_-(p_-+2)/2, & \text{ type (c) }
\end{cases}
\end{equation*}
Hence $n_-\leq p_+^2/2$. For $n_+$ we have: 
\begin{align*}
n_+ &=\lfloor \kappa_+(2(\kappa_+-1)+2\epsilon_+-1)/2\rfloor =\lfloor(p_++2-\epsilon)(p_++1+\epsilon)/4 \rfloor\\
&=\lfloor (p_++1)(p_++2)/4\rfloor \leq (p_++1)(p_++2)/4.
\end{align*}
Thus $n\leq  p_+^2/2+(p_++1)(p_++2)/4=3p_+(p_++1)/4+1/2$.
The second method gives $n=n_-^e+n_+^e=n_+^e=\lfloor (2p_++1)(2p_++2)/4\rfloor\geq p_+(p_++3/2)$. We conclude that 
$p_+(p_++3/2)\leq 3p_+(p_++1)/4+1/2$, which implies $p_+\leq 1$, hence $p_-\leq 0$, a contradiction. 

In the second case we have $p^e_{max}=2p_++1$. 
We now find that 
\begin{equation*}
n \leq  (p_++1)^2/2+(p_++1)(p_++2)/4 =3p_+(p_++1)/4+1,
\end{equation*} 
and $n=n^e_+=\lfloor (2p_++2)(2p_++3)/4\rfloor\geq p_+(p_++5/2)+1$. We thus see that 
$p_+(p_++5/2)\leq 3p_+(p_++1)/4$, which implies $p_+\leq -6$, also a contradiction.

In other words, we have shown that \eqref{eq:fdegmu}
has no solutions when $\lambda_+$ is of type (a) and meanwhile $\lambda_-$ is of type (b) or (c). \\

(4) Let us now consider the cases where $\lambda_+$ is of type (a) and $\lambda_-$ is of type (d).
We use the notation  $\Sigma_-^d=\diracdelta_1+\diracdelta_2+\diracdelta_4+\ldots+\diracdelta_{a_-}$, 
where $a_-=2^{s+1}-1$. Also recall that $\delta_-=1$, and $p_-=2^{s+2}-1$ with $s\geq 0$ in case (d), 
and we will use the notation $r_-=2^s-1$.

Similar to the previous case (with $\lambda_-$ is of type (c), and $\lambda_+$ of type (a)) we have
for all $k> 0$:
\begin{align*}
&\mult^{(\lambda_-,\lambda_+)}_{(\delta_-,\delta_+)}(k)-\delta_-\delta_+\diracdelta_1(k)\\
=&\Delta(\tih_-*\tih_+)(k)
-2\tih_-(k)+\Sigma^d_-(k)+\Delta_{\delta_+}(\tih_-)(k)
-\tih_+(k)+\Delta(\tih_+)(k)
-\delta_+\diracdelta_1(k)\\
=&((\Delta(\tih_-)+\Delta(\delta_0))*\tih_+)(k)
-2\tih_-(k)+\Sigma^d_-(k)
+\Delta_{\delta_+}(\tih_-)(k)
-\tih_+(k)
-\delta_+\diracdelta_1(k)\\
=&-T_{-(p_-+1)}\tih_+(k)-T_{p_-+1}\tih_+(k)
+\tih_+(k)-\Delta(\tih_+)(k)\\
&\phantom{-T_{-(p_-+1)}\tih_+(k)-T_{p_-+1}\tih_+(k)+}
+\Sigma^d_-(k)-2\tih_-(k)+\Delta_{\delta_+}(\tih_-)(k)
-\delta_+\diracdelta_1(k)\\
=&-T_{p_-+1}\tih_+(k)+\tih_+(k)+S_{p_++1}+\Sigma^d_-(k)-2\tih_-(k)-T_{-(p_-+1)}\tih_+(k)\\
&\phantom{-T_{-(p_-+1)}\tih_+(k)-T_{p_-+1}\tih_+(k)+}
-\delta_+\delta_{p_-+1}(k)+(\delta_+-1)\diracdelta_1(k)
\end{align*}
Like in the previous case, we conclude first of all that the right-hand side of \eqref{eq:fdegmu} needs to be of the form 
 $d_{a,b}^{extra,\textup{III}}$,
or $d_{a,b}^{extra,\textup{IV}}$ where $a$ and $b$ are such that one of 
the corresponding extra-special parameters $(m_-^e,m_+^e)$ equals $1/4$. 
Let us use the same notations as in the previous case. We again have $\{p_-^e,p_+^e\}=\{-1,p_{max}^e\}$
for some $p_{max}^e\in\ZZ_+$. We see that $p_{max}^e=p_-+p_++1$. 
As before we need that 
$T(k):=\tih_+(k)+S_{p_++1}+\Sigma_-(k)-2\tih_-(k)-T_{-(p_-+1)}\tih_+(k)
-\delta_+\delta_{p_-+1}(k)+(\delta_+-1)\delta_1(k)=0$ for all $k>p_-+1$, from which we conclude that $p_+<p_-+1$
(because of the presence of the function $S_{p_-+1}$). As before, we need that $T(p_-)=-1$, implying that $p_+=p_--1$
this time. 

Hence in this situation we have $p_+^e=p_-+p_++1=2p_-$.
We have $n=n_-+n_+$ with $n_-=8(r_-+1)^2=(p_-+1)^2/2$, and 
$n_+=\lfloor \kappa_+(2(\kappa_+-1)+2\epsilon_+-1)/2\rfloor 
=\lfloor(p_++2-\epsilon)(p_++1+\epsilon)/4 \rfloor=\lfloor p_-(p_-+1)/4\rfloor$, hence $n_+\leq p_-(p_-+1)/4$. 
Taken together we have 
$$
n\leq (p_-+1)^2/2+p_-(p_-+1)/4=(p_-+1)(3p_-+2)/4.
$$
On the other hand, we have $n=n_+^e\geq p_-(p_-+3/2)$. Hence we conclude that 
$p_-(p_-+3/2)\leq (p_-+1)(3p_-+2)/4$, which implies that $p_-^2/4+p_-/4\leq 1/2$, hence $p_-\leq 1$.
But then $p_+\leq 0$, a contradiction.

Hence we have shown that \eqref{eq:fdegmu} has no solutions for $\lambda_+$ of type (a) and $\lambda_-$ of type (d). 
\\

(5) Next let us consider the cases with $\lambda_+$ of type (a) and $\lambda_-$ of type (e).
We now have $r_-=2^s-1$ for some $s\geq 0$, and $a_-=2(r_-+1)$, $p_-=2a_-=2^{s+2}\geq 4$. 
Moreover, $\delta_-=0$ now.
We write $\Sigma^e_-=\diracdelta_1+\diracdelta_2+\diracdelta_4+\ldots+\diracdelta_{p_-}$.
Similar to the previous cases (with $\lambda_-$ is of type (b), and $\lambda_+$ of type (a)) we have
for all $k> 0$:
\begin{align*}
&\mult^{(\lambda_-,\lambda_+)}_{(\delta_-,\delta_+)}(k)-\delta_-\delta_+\diracdelta_1(k)\\
=&\Delta(\tih_-*\tih_+)(k)
-2\tih_-(k)+\Sigma^e_-(k)+\Delta_{\delta_+}(\tih_-)(k)
-\tih_+(k)\\
=&(\Delta(\tih_-)*\tih_+)(k)
-2\tih_-(k)+\Sigma^e_-(k)
+\Delta_{\delta_+}(\tih_-)(k)
-\tih_+(k)\\
=&-T_{p_-+1}\tih_+(k)
+T_{p_-}\tih_+(k)-T_{p_--1}\tih_+(k)\\
&\phantom{is dit genoeg?}
-T_{-(p_-+1)}\tih_+(k)+T_{-p_-}\tih_+(k)
-T_{-(p_--1)}\tih_+(k)\\
&\phantom{is dit genoeg?Met dit?}
+\tih_+(k)+\Sigma^e_-(k)+\Delta_{\delta_+}(\tih_-)(k)-2\tih_-(k)
\end{align*}
If we assume that $p_+\geq 3$ then the maximum of the support of the sum $S_6$ of the first 6 terms (the translations of $\tih_+$, but not 
including the non translated term $\tih_+$) is $k=p_-+p_++1$, and the maximum of the support of the remaining terms 
$T_R:=\Sigma^e_-(k)+\Delta_{\delta_+}(\tih_-)(k)-2\tih_-(k)$ (excluding $\tih_+$) is 
equal to $p_-+1$ (if $\delta_+=1$) or $p_-$ (if $\delta_+=0$). Since $p_+=3$ corresponds to $\delta_+=0$, 
we see that $T_R=0$ on $p_-+p_++1,\,p_-+p_+,\,p_-+p_+-1,\,p_-+p_+-2$. The values of $S_6$ there are easily seen to be 
$-1,\,0,\,-2,\,-1$ respectively. Since $\tih_+$ is also $0$ at these arguments (since $p_-+p_+-2 > p_+$) these values are the 
multiplicities of the factors $(1+q^k)$ in $\mu_{\delta_-,\delta_+}^{(\lambda_-,\lambda_+)}$ for these values of $k$. 
If $\respt$ solves \eqref{eq:fdegmu} we conclude that the right-hand side $d_{rhs}$ of \eqref{eq:fdegmu} is of the form 
$d_{a,b}^{extra,\textup{III}}$,
or $d_{a,b}^{extra,\textup{IV}}$ where $a$ and $b$ are such that one of 
the corresponding extra-special parameters $(m_-^e,m_+^e)$ equals $3/4$. 
Hence we now know $d_{rhs}$ exactly in terms of $p_-$ and $p_+$.

Now consider the behaviour of $S_6-d_{rhs}$ at the arguments $k=p_-+1,\,p_-,\,p_--1,\,p_--2$. 
Looking at the three positively translated graphs of $\tih_+$ for all possible 
congruence classes of $p_+$ modulo $4$, we see that these values are $\delta_+,\,1-\delta_+,\,1+\delta_+,\,2$ respectively, 
and the value of $T_R$ at these arguments is $-\delta_+,\,\delta_+-1,\,-\delta_+-2,\,\Sigma^e_-(p_--2)-4$. The 
sum of these gives the values $0,\,0,\,-1,\,\Sigma^e_-(p_--2)-2$, and if we add $\tih_+$ to this we should get $0$ everywhere.
We conclude that $p_+=p_--1$ and $p_-=4$, otherwise this is not possible. However, then $p_+=3$ which 
corresponds to $\delta_+=0$, so that $(\delta_-,\delta_+)=(0,0)$. But $d_{rhs}=d^{extra,\textup{III}}_{a,b}$, with 
corresponding values $m_-^e=3/4$ and $m_+^e=9/4$, which yields $\delta_-^e=1$ and  $\delta_+^e=0$. 
One checks that $d_{rhs}$ is an extra-special supercuspidal unipotent degree for the symplectic group 
$\mbf{Sp}_{36}$, while the ``solution" $\respt$ of \eqref{eq:fdegmu} is a residual point for $\mbf{SO}_{28}$, so this is not really a solution.\\

For $p_+\leq 2$, the value $\tih_+(k)$ is zero at $k$ equal to $p_-+p_++1,\dots,\,p_--1$. If $p_+=2$ we find that $S_6$ has the 
values $-1,\,0,\,-2,\,0,\,-2$ there, while $\delta_+=0$ and $T_R$ has the values $0,\,0,\,0,\,-1,\,-2$, which yields the total $-1,\,0,\,-2,\,-1,\,-4$,
which are not the multiplicities of a cuspidal unipotent formal degree, and for $p_+=1$ (with $\delta_+=1$) 
we get for these values of $S_6$ the sequence $-1,\,1,\,-2,\,1$ while for $T_R$  we get $0,\,-1,\,0,\,-3$ which yields the total 
$-1,\,0,\,-2,\,-2$, which is also not a sequence of multiplicities of a cuspidal unipotent formal degree.
We may finally conclude that there are no solutions to \eqref{eq:fdegmu} of the form $\respt_{\lambda_-,\lambda_+}$ with 
$\lambda_-$ of type (e) and $\lambda_+$ of type (a). \\

(6) Next let us consider the cases with $\lambda_-$ of type (b), (c) and $\lambda_+$ of type (d).
This implies that $\delta_+=1$, and $p_+\geq 3$.  
We compute in a similar way to the case where both $\lambda_\pm$ are of type (b) or (c):
\begin{align*}
&\mult^{(\lambda_-,\lambda_+)}_{(\delta_-,\delta_+)}(k)-\delta_-\delta_+\diracdelta_1(k)\\
=&\Delta(\tih_-*\tih_+)(k)
-2\tih_-(k)+\Delta_{\delta_+}(\tih_-)(k)
-2\tih_+(k)+\Sigma_+^d(k)+\Delta_{\delta_-}(\tih_+)(k)
-\delta_-\delta_+\diracdelta_1(k)\\
=&((\Delta(\tih_-)+\Delta_{\delta_-}(\delta_0))*\tih_+)(k)
-2\tih_+(k)+\Sigma_+^d(k)-2\tih_-(k)+\Delta(\tih_-)(k)
-\delta_-\diracdelta_1(k)\\
=&-T_{-(p_-+1)}\tih_+(k)-T_{+(p_-+1)}\tih_+(k)
+\Sigma_+^d(k)-2\tih_-(k)-\diracdelta_{(p_-+1)}(k)\\
=&-(\max\{0,p_-+p_++2-k\}+\max\{0,|p_+-p_-|-k\})+R(k)
\end{align*}
where we see that $R(k)=0$ for $k>p_-+1$, and $R(p_-+1)>0$.

This does not represent a supercuspidal unipotent formal degree, and therefore there are no 
solutions of \eqref{eq:fdegmu} with $\lambda_-$ of type (b) or (c) and $\lambda_+$ of type (d).\\

(7) Next let us consider the cases with $\lambda_-$ of type (b), (c) and $\lambda_+$ of type (e).
This implies that $\delta_+=0$, and $p_+\geq 4$.  
We again compute in a similar way to the case where both $\lambda_\pm$ are of type (b) or (c):
\begin{align*}
&\mult^{(\lambda_-,\lambda_+)}_{(\delta_-,\delta_+)}(k)-\delta_-\delta_+\diracdelta_1(k)\\
=&\Delta(\tih_-*\tih_+)(k)
-2\tih_-(k)+\Delta_{\delta_+}(\tih_-)(k)
-2\tih_+(k)+\Sigma_+^e(k)+\Delta_{\delta_-}(\tih_+)(k)\\
=&((\Delta(\tih_-)+\Delta_{\delta_-}(\delta_0))*\tih_+)(k)
-2\tih_+(k)+\Sigma_+^e(k)-2\tih_-(k)\\
=&-T_{-(p_-+1)}\tih_+(k)-T_{+(p_-+1)}\tih_+(k)
+\Sigma_+^e(k)-2\tih_-(k)
\end{align*}
We see that the values at $k=p_-+p_++1,\,p_-+p_+,\,p_-+p_+-1,\,p_-+p_+-2$
are $-1,-1,-2,-3$ respectively. 
This does not represent a supercuspidal unipotent formal degree, and therefore there are no 
solutions of \eqref{eq:fdegmu} with $\lambda_-$ of type (b) or (c) and $\lambda_+$ of type (e).

(8) Next let us consider the cases with $\lambda_\pm$ both of type (d).
This implies that $\delta_\pm=1$, and $p_\pm\geq 3$. We may assume that $p_+\leq p_-$
We again perform similar computations:
\begin{align*}
&\mult^{(\lambda_-,\lambda_+)}_{(\delta_-,\delta_+)}(k)-\diracdelta_1(k)\\
=&\Delta(\tih_-*\tih_+)(k)
-2\tih_-(k)+\Delta(\tih_-)(k)
-2\tih_+(k)+\Sigma_+^d(k)+\Delta(\tih_+)(k)+\Sigma_-^d(k) -\diracdelta_1(k)\\
=&((\Delta(\tih_-)+\Delta(\delta_0))*\tih_+)(k) 
-2\tih_+(k)+\Sigma_+^d(k)-2\tih_-(k)+\Delta(\tih_-)(k) +\Sigma_-^d(k)-\diracdelta_1(k)\\
=&-T_{-(p_-+1)}\tih_+(k)-T_{+(p_-+1)}\tih_+(k)-2\tih_-(k)-\Delta(\tih_+)(k) 
+\Delta(\tih_-)(k)+\Sigma_+^d(k)+\Sigma_-^d(k)
\end{align*}
If this is a solution of \eqref{eq:fdegmu} then, looking at the values at the arguments $k=p_-+p_++2,\,p_-+p_++1,\,p_-+p_+,\,p_-+p_+-1$, 
we conclude that the right-hand side $d_{rhs}$ of \eqref{eq:fdegmu} should be of the form 
$d_{a,b}^{ord,\textup{III}}$ or $d_{a,b}^{ord,\textup{IV}}$, because 
these values are $0,\,-1,\,-2,\,-3$.  
But at $k=p_-+1$ the value of the above sum of terms is greater or equal to $1-p_++1-1=1-p_+$, 
while the multiplicity function of $d_{rhs}$ is less or equal to $-1-p_+$. Hence there 
are no solutions $\respt_{\lambda_-,\lambda_+}$ of \eqref{eq:fdegmu} with $\lambda_\pm$ of type (d).\\

(9) Next let us consider the cases with $\lambda_\pm$ both of type (e).
This implies that $\delta_\pm=0$, and $p_\pm\geq 4$. We may assume that $p_+\leq p_-$
We again perform similar computations:
\begin{align*}
&\mult^{(\lambda_-,\lambda_+)}_{(\delta_-,\delta_+)}(k)\\
=&\Delta(\tih_-*\tih_+)(k)
-2\tih_-(k)
-2\tih_+(k)+\Sigma_+^e(k)+\Sigma_-^e(k)\\
=&-T_{p_-+1}\tih_+(k)
+T_{p_-}\tih_+(k)-T_{p_--1}\tih_+(k)\\
&\phantom{is dit genoeg?}
-T_{-(p_-+1)}\tih_+(k)+T_{-p_-}\tih_+(k)
-T_{-(p_--1)}\tih_+(k)\\
&\phantom{is dit genoeg?met dit dan ook toch, en beetje}-2\tih_-(k)+\Sigma_+^e(k) +\Sigma_-^e(k)
\end{align*}
This cannot represent a solution of \eqref{eq:fdegmu} since, looking at the values of this sum at the arguments 
$k=p_-+p_++2,\,p_-+p_++1,\,p_-+p_+,\,p_-+p_+-1,\,p_-+p_+-2$, 
we get $0,\,-1,\,-1,\,-2,\,-3$, which does not correspond to a supercuspidal formal degree.
Hence there is no solution $\respt_{\lambda_-,\lambda_+}$ of \eqref{eq:fdegmu} with $\lambda_\pm$ of type (e). \\

(10) Finally, let us consider the cases with $\lambda_-$ of type (d), and $\lambda_+$ of type (e).
This implies that $\delta_-=1$, $p_-\geq 3$, and $\delta_+=0$, $p_+\geq 4$. 
We again perform similar computations: 
\begin{align*}
&\mult^{(\lambda_-,\lambda_+)}_{(\delta_-,\delta_+)}(k)\\
=&\Delta(\tih_-*\tih_+)(k) -2\tih_-(k)+\Sigma_-^d(k) 
-2\tih_+(k)+\Sigma_+^e(k)+\Delta(\tih_+)(k)\\
=& \big( (\Delta(\tih_-)+\Delta(\delta_0))*\tih_+ \big)(k) -2\tih_-(k)-2\tih_+(k)+\Sigma_+^e(k) +\Sigma_-^d(k)\\
=&-T_{-(p_-+1)}\tih_+(k)-T_{+(p_-+1)}\tih_+(k)-2\tih_-(k)-\Delta(\tih_+)(k) 
+\Sigma_+^e(k)
+\Sigma_-^d(k)
\end{align*}
If $p_->3$, this cannot represent a solution of \eqref{eq:fdegmu} since in this case, looking at the values of this sum at the arguments 
$k=p_-+p_++2,\,p_-+p_++1,\,p_-+p_+,\,p_-+p_+-1,\,p_-+p_+-2$, 
we get $0,\,-1,\,-1,\,-2,\,-3$, which does not correspond to a cuspidal formal degree.
If $p_-=3$ the values of the sum on 
$k=p_++5,\,p_++4,\,p_++3,\,p_++2,\,p_++1,p_+$ are as follows
$0,\,-1,\,-1,\,-2,\,-2,\,-4$, which also does not correspond to a cuspidal unipotent formal degree.
Hence there is no solution $\respt_{\lambda_-,\lambda_+}$ of \eqref{eq:fdegmu} with $\lambda_-$ of type (d) and $\lambda_+$ of type (e) either.\\

We have finally checked all possibilities, which completes the proof of Theorem \ref{thm:A} for symplectic and even orthogonal groups. 
\end{proof}

Together with the results of Sections \ref{sub:oddorth} and \ref{sub:unit} we have proved Theorem \ref{thm:A}.


\end{document}